\newcommand{\revision}{}
\newenvironment{mathlist}
{\begin{enumerate}[label={\upshape(\roman*)}, align=left, widest=iii, leftmargin=*]}
{\end{enumerate}\ignorespacesafterend}
\newtheorem*{rep@theorem}{\rep@title}
\newcommand{\newreptheorem}[2]{%
\newenvironment{rep#1}[1]{%
 \def\rep@title{#2 \ref*{##1}}%
 \begin{rep@theorem}}%
 {\end{rep@theorem}}}
\newtheorem{theorem}{Theorem}[section]
\newtheorem*{theorem*}{Theorem}
\newtheorem{quotedtheorem}{Theorem}
\newtheorem{proposition}[theorem]{Proposition}
\newtheorem{lemma}[theorem]{Lemma}
\newtheorem{corollary}[theorem]{Corollary}
\newtheorem{conjecture}[theorem]{Conjecture}
\theoremstyle{definition}
\newtheorem*{rep@definition}{\rep@title}
\newcommand{\newrepdefinition}[2]{%
\newenvironment{rep#1}[1]{%
 \def\rep@title{#2 \ref*{##1}}%
 \begin{rep@definition}}%
 {\end{rep@definition}}}
\newenvironment{definition}
  {\pushQED{\qed}\definitionX}
  {\popQED\enddefinitionX}
\newenvironment{notation}
  {\pushQED{\qed}\notationX}
  {\popQED\endnotationX}
\newenvironment{remark}
  {\pushQED{\qed}\remarkX}
  {\popQED\enddefinitionX}
\providecommand{\reachedby}{%
  \mathrel{\mathpalette\reflect@squig\relax}%
}
\newcommand{\reflect@squig}[2]{%
  \reflectbox{$\m@th#1\rightsquigarrow$}%
}
\newcommand{\fallfact}[2]{#1^{\underline{#2}}}
\newcommand{\coevent}{\,\mathcal{R}\,}
\newcommand{\gammaDown}{\gamma^{\downarrow}}
\newcommand{\gammaUp}{\gamma^{\uparrow}}
\newcommand{\eulergamma}{\text{\textgamma}}
\newcommand\ring[1]{\mathaccent23{#1}}
\def\Vr{\ring{V}} 
\title{Combinatorial and stochastic properties of ranked tree-child networks}
\begin{document}

\author[1,2]{François Bienvenu}
\author[1,2]{Amaury Lambert}
\author[3]{Mike Steel}

\affil[1]{\small Center for Interdisciplinary Research in Biology (CIRB),
CNRS UMR 7241,\newline Collège de France, PSL Research University, 
Paris, France}
\affil[2]{Laboratoire de Probabilités, Statistique et Modélisation (LPSM),
CNRS UMR 8001, Sorbonne Université, Paris, France}
\affil[3]{Biomathematics Research Centre, University of Canterbury,
Christchurch,\newline New Zealand}

\maketitle

\begin{abstract}
Tree-child networks are a recently-described class of directed acyclic
graphs that have risen to prominence in phylogenetics (the study of evolutionary
trees and networks). Although these networks have a number of attractive
mathematical properties, many combinatorial questions concerning them remain
intractable. In this paper, we show that endowing these networks with
a biologically relevant ranking structure yields mathematically
tractable objects, which we term ranked tree-child networks (RTCNs).
We explain how to derive exact and explicit combinatorial results concerning the
enumeration and generation of these networks. We also explore
probabilistic questions concerning the properties of RTCNs
when they are sampled uniformly at random. These questions
include the lengths of random walks between the root and leaves (both
from the root to the leaves and from a leaf to the root); the
distribution of the number of cherries in the network; and sampling RTCNs
conditional on displaying a given tree. We also formulate a conjecture
regarding the scaling limit of the process that counts
the number of lineages in the ancestry of a leaf.
The main idea in this paper, namely using ranking as a way to achieve
combinatorial tractability, may also extend to other classes of
networks.
\end{abstract}


\section{Introduction} \label{secRTCNintro}

Tree-child networks are a class of directed acyclic graphs (DAGs) introduced
by~\cite{Cardona2009TCBB} as a way to model reticulated phylogenies (that is,
phylogenies that take into account the possibility of hybridization or
horizontal gene transfer). In addition to being biologically relevant,
tree-child networks are mathematically interesting combinatorial structures and
have thus gained attention recently to become one of the most studied classes
of phylogenetic networks. Although they are simpler and more structured than
arbitrary phylogenetic networks, they are nevertheless notoriously hard to study.
For instance, their enumeration is still an open
problem~\cite{McDiarmid2015AnnComb, FuchsArXiv} and there is no known algorithm
to sample them uniformly (although recursive procedures to enumerate them
have recently been introduced \cite{CardonaArXiv, CardonaZhang}, 
as well as asymptotic methods \cite{Fuchs2ArXiv}).  As a result, relatively little is
known about the properties of ``typical'' tree-child networks.

In this paper, we introduce a new class of phylogenetic networks that
we term \emph{ranked tree-child networks}, or RTCNs for short. 
These networks correspond to a subclass of tree-child networks that are endowed
with an additional structure ensuring that they could have resulted from a
time-embedded evolutionary process, something that is not required of
tree-child networks.

Besides being arguably more biologically relevant than tree-child networks,
one of the main advantages of RTCNs is that they are much easier to study.
For instance, we show that there exist explicit formulas for the number
of leaf-labeled RTCNs, as well as simple procedures to sample them
uniformly at random (or even uniformly at random subject to some natural
constraints such as containing a fixed number of reticulations, or displaying a
given tree). These features make it possible to gain some insight into the
structure of uniform RTCNs.

The structure of this paper is as follows.  We begin by describing the class of
of tree-child networks, and then define the new class of RTCNs.  We then collect together
statements of the main results of this paper.  In Section~\ref{secRTCNEnum}, we
use forward-in-time and backward-in-time constructions of RTCNs to derive exact
results for their enumeration and generation. In
Section~\ref{secRTCNLinkTrees}, we describe the probability distribution for
the number of ``cherries'' and ``tridents'' in a RTCN that has been sampled
uniformly at random.  Section~\ref{secRTCNRandPaths} provides an analysis of
the lengths of random walks in a uniform RTCN, both from the root to
a leaf, and from a leaf to the root. Finally, in
Section~\ref{secRTCNAncestryLeaf} we investigate the number of lineages in the
ancestry of a leaf. We characterize the Markov chain that counts these lineages as
we move away from the leaf, and conjecture that when appropriately rescaled
it converges, as the number of leaves of the RTCN goes to infinity, to a smooth
function with a random shape parameter.

\subsection{Preliminaries} \label{secRTCNprelim}

Let us start by recalling the definition of tree-child networks and introducing
some vocabulary.

\begin{definition} \label{defBinRetic}
A \emph{binary phylogenetic network} is a connected directed acyclic graph
which has a single vertex of
in-degree 0 and out-degree 2 (the {\em root}) and where
every other vertex has either:
\begin{itemize}
  \item in-degree 1 and out-degree 0 (the \emph{leaves})
  \item in-degree 1 and out-degree 2 (the \emph{tree vertices})
  \item in-degree 2 and out-degree 1 (the \emph{reticulation vertices}.) \qedhere
\end{itemize}
\end{definition}

\enlargethispage{1ex}
If $V$ is the vertex set of a binary phylogenetic network, we write
$\partial V$ for the set of its leaves.
The vertices that are not leaves are called \emph{internal vertices} and we
denote their set by $\Vr$.

We refer to the elements of the set
$\Gamma_{\!\text{in}}(v) = \Set{u \suchthat u \to v}$ as the \emph{parents} of
$v$ and to that of the set
$\Gamma_{\!\text{out}}(v) = \Set{u \suchthat v \to u}$ as the \emph{children}
of~$v$. 


Finally, an edge $\vec{uv}$ is called a \emph{reticulation edge} if
$v$ is a reticulation vertex and a \emph{tree edge} if $v$ is a tree vertex
or a leaf.

\begin{definition} \label{defTCN}
A \emph{tree-child network} is a binary phylogenetic network such
that every internal vertex has at least one child that is a tree vertex or a
leaf.
\end{definition}

Note that there are other simple characterizations of tree-child networks.
For example, Lemma~2 in \cite{Cardona2009TCBB} gives the following alternative
definition: a binary phylogenetic network is tree-child if and only if for
every vertex $v$ there exists a leaf such that every path going from the root
to that leaf goes through~$v$. \revision{See Figure~\ref{figRTCN00} for
examples of tree-child networks and of a binary phylogenetic network that
is not a tree-child network.}

\begin{figure}[h!]
  \centering
  \captionsetup{width=0.85\linewidth}
  \includegraphics[width=0.85\linewidth]{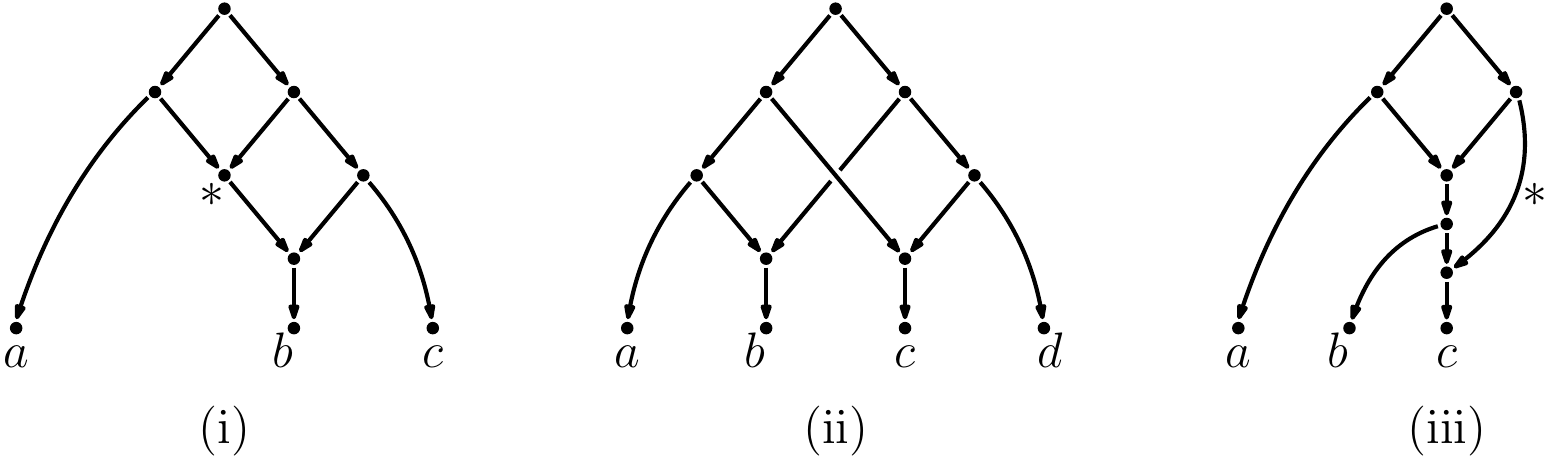}
  \caption{\revision{(i) A binary phylogenetic network that fails to be tree-child,
  since the only path from the vertex indicated by ``\textasteriskcentered'' to
  the leaves passes through a reticulation vertex; (ii) A tree-child network
  that cannot be ranked (i.e.\ does not correspond to any RTCN); (iii) A
  tree-child network that is not a normal network, as the arc indicated by
  ``\textasteriskcentered'' is redundant
  (see Definition~\ref{defNormalNetwork}).}}  
\label{figRTCN00}
\end{figure}

\subsection{Ranked tree-child networks} \label{secRTCNDef}

First, note that every every tree-child network is endowed with
a partial order, which we refer to as the \emph{genealogical order}, defined by
\[
  u \leadsto v \iff
  \begin{cases}
    \text{there exists a directed path from $u$ to $v$} \\[-0.5ex]
  \text{that contains at least one tree-edge.}
  \end{cases}
\]
Let us now introduce the  notion of \emph{events} of a tree-child network.

\begin{definition}
Let $N$ be a tree-child network. Define a relation $\mathcal{R}$
on the set~$\Vr$ of internal vertices of $N$ by:
\[
  u \coevent v \iff
  \text{$u=v$ or $u$ and $v$ are linked by a reticulation edge or share a child}\,.
\]
It is easy to see that, for tree-child networks, $\mathcal{R}$ is an
equivalence relation on $\Vr$ (note that this is not the case for general
networks). We call the equivalence classes of $\mathcal{R}$ the \emph{events}
of $N$.
Moreover, writing $\bar{u}$ for the equivalence class of a vertex $u$,
\begin{itemize}
  \item either $\bar{u} = \Set{u}$, in which case $\bar{u}$ is called a
    \emph{branching event};
  \item or \revision{$\bar{u}$ has three elements}, and $\bar{u}$ is called a
    \emph{reticulation event}.
\end{itemize}
\revision{When there is no risk of confusion, we refer to these events as
\emph{branchings} and \emph{reticulations}, for short.}
\end{definition}

\pagebreak

\begin{definition} \label{defRTCN}
  A \emph{ranked tree-child network} is an ordered pair $(N, \prec)$ where
  \begin{itemize}
    \item $N$ is a tree-child network.
    \item The \emph{chronological order} $\prec$ is a strict total order on the
      set of events of $N$ that is compatible with the genealogical order;
      in other words, for every internal vertices $u$ and $v$,
      \[
        u \leadsto v \;\implies\;
        \bar{u} \prec \bar{v} .
      \]
  \end{itemize}
\revision{See Figure~\ref{figRTCN01} for a graphical representation of a ranked
  tree-child network.}
\end{definition}
Note that in the case where $N$ is a tree, Definition~\ref{defRTCN} agrees
with the classical notion of ranked tree (see e.g.\ \cite{LambertBrazJProbabStat2017}). Indeed, in that case every
internal vertex is its own equivalence class and thus the chronological order can
be seen as a total strict order on $\Vr$.

The idea behind the notion of ranking is that the internal vertices of
a phylogeny can be associated to evolutionary events. Under the assumption
that these events are instantaneous and that no two events can happen at the
same time, ranking a phylogeny is a straightforward way to endow it with the
information of the order in which these events occurred,
without having to specify the actual times at which they did.

\begin{figure}[h!]
  \centering
  \captionsetup{width=0.85\linewidth}
  \includegraphics[width=0.85\linewidth]{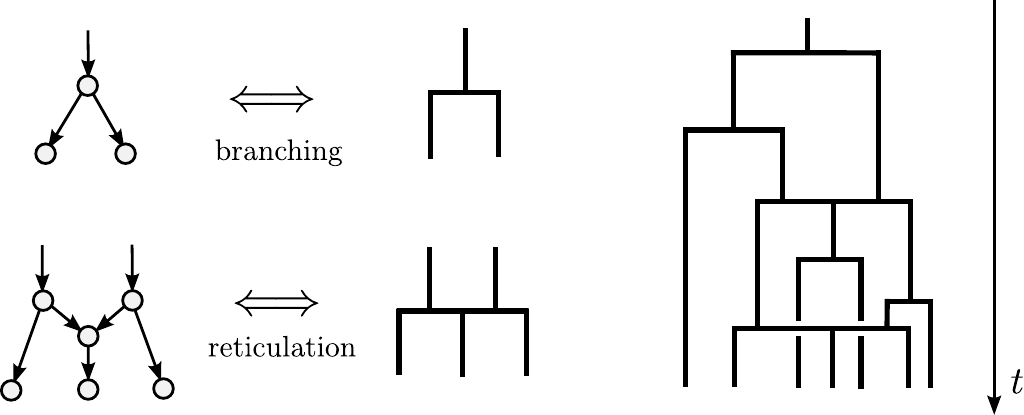}
  \caption{Graphical representation of a RTCN. 
  Each ``$\boldsymbol\bot$'' corresponds to a tree vertex (or the root), each
  ``$\boldsymbol\top$'' to a reticulation vertex,
  and the tip of each dangling vertical line to a leaf.
  The vertical lines represent tree edges, and
  the horizontal ones events. Note that
  $a \prec b$ if and only if the horizontal line representing $b$
  is below that of $a$.}  
\label{figRTCN01}
\end{figure}

The notion of ranking raises two questions:
\begin{enumerate}
  \item Can all tree-child networks be ranked?
  \item How many rankings are there for a given tree-child network?
\end{enumerate}

It is not hard to see that the answer to the first question is no.
In fact, we will prove in the next section
that almost no tree-child network can be ranked. While this rules out the
possibility of using RTCNs to gain insight into the structure of tree-child
networks, this does not make RTCNs irrelevant, because the tree-child networks
that we expect to see in nature should typically be rankable\,\footnote{\,This
is not necessarily the case if some extinct lineages are not observed; but in
that case we can always assume that there exists an underlying ranked
tree-child network.}. Thus, from a biological point of view, RTCNs
might actually be more relevant than tree-child networks.

Regarding the second question, \revision{it is well-known
(see e.g.\ Chapter 5, Exercice~20 of~\cite{Knuth1997} or Lemma~1
in~\cite{steel2001properties}) that
the number of ways to rank a binary tree is}
\[
  \frac{(\ell - 1)!}{\prod_{v \in \Vr}(\lambda(v) - 1)}\,, 
\]
where $\ell$ is the number of leaves of the tree
and $\lambda(v)$ the number of leaves
subtended by $v$ -- that is, ${\lambda(v) = \# \Set{u \in \partial V \suchthat v
\leadsto u}}$. 
However, the proof of this result relies on the recursive
structure of trees, which tree-child networks lack. Thus, counting the number
of ways to rank a given tree-child network remains an open question.

\subsection{Relation to other types of phylogenetic networks}

Let us start by recalling the definition of temporal networks.

\begin{definition} \label{defTemporal}
A binary phylogenetic network $N = (V, \vec{E})$ is \emph{temporal}
if there exists a (time-stamp) function $t: V \rightarrow \R$ that satisfies the following two properties:
\begin{mathlist}
\item If $\vec{uv}$ is a reticulation edge then $t(u) = t(v)$.
\item If $\vec{uv}$ is a tree edge then $t(u) < t(v)$. \qedhere
\end{mathlist}
\end{definition}

The underlying tree-child network of a RTCN is always a temporal network, because
to get a valid time-stamp function~$t$ one can assign to every internal vertex
the rank of the corresponding event (that is, $t(u) = k$ where $\bar{u}$ is the
$k$-th event, and $t(\rho) = 0$ for the root). Conversely, every temporal
tree-child network can be ranked, because one can tweak the time-stamps to make
sure that all reticulation vertices have different time-stamps,  and then set
\[
  \bar{u} = \bar{v} \iff t(u) = t(v)\quad\text{and}\quad
  \bar{u} \prec \bar{v} \iff t(u) < t(v)
\]
to get a valid partition into events and a valid ranking.
This gives us the following proposition.

\begin{proposition}
A tree-child network is temporal if and only if it is the underlying network
of a RTCN.
\end{proposition}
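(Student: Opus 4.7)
The plan is to prove the two implications separately, essentially formalizing the informal argument that immediately precedes the proposition. Both directions are straightforward translations between the two structures (chronological order on events versus a real-valued time-stamp function), and the main job of the proof is to verify that the construction in each direction is well-defined and satisfies the axioms.

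For the forward direction, I would start with a RTCN $(N, \prec)$ and define $t$ on $V$ by setting $t(\rho) = 0$, and for any other internal vertex $u$ setting $t(u) = k$ whenever $\bar{u}$ is the $k$-th event of $\prec$; for a leaf $v$ with parent $u$, I would set $t(v) = t(u) + 1$ (any value strictly larger than $t(u)$ would do). Checking property (i) of Definition~\ref{defTemporal} then reduces to observing that if $\vec{uv}$ is a reticulation edge, then by definition of $\mathcal{R}$ the vertices $u$ and $v$ lie in the same event, so $t(u) = t(v)$. For property (ii), if $\vec{uv}$ is a tree edge with $v$ internal, then $u \leadsto v$ via a path with a tree edge, so $\bar{u} \prec \bar{v}$ and hence $t(u) < t(v)$; the case where $v$ is a leaf is immediate from the construction.

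For the reverse direction, I would start with a temporal tree-child network and its time-stamp function $t$, and use the fact that $\mathcal{R}$ is already known (from the preceding definition) to be an equivalence relation on $\Vr$. The first step is to observe that $t$ is constant on each $\mathcal{R}$-equivalence class: on a branching event this is trivial, and on a reticulation event $\{v, u_1, u_2\}$ with $v$ the reticulation vertex, the axioms force $t(u_1) = t(v) = t(u_2)$. This produces a well-defined map $\tau$ from the set of events to $\RR$ such that $\bar{u} \neq \bar{v}$ and $u \leadsto v$ implies $\tau(\bar{u}) < \tau(\bar{v})$ (every such path contains at least one tree edge, on which $t$ strictly increases). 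Then any total order $\prec$ on events refining the strict partial order induced by $\tau$ is compatible with $\leadsto$, and such a refinement exists by a standard topological-sort / linear-extension argument on the finite set of events.

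The only mildly subtle point, and hence the one I would be most careful about, is the passage from the (possibly non-injective) time-valued function $\tau$ on events to a total order: one has to make sure that breaking ties between events that have equal $\tau$-values never conflicts with the genealogical order, which is guaranteed precisely because any two events related by $\leadsto$ already receive strictly different $\tau$-values. Everything else is bookkeeping, and the argument concludes by noting that $(N, \prec)$ so constructed is by definition a RTCN whose underlying tree-child network is $N$.
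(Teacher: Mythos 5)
Your proof is correct and follows essentially the same route as the paper, which establishes this proposition in the paragraph immediately preceding it: in the forward direction the rank of each event serves as the time-stamp, and in the converse the time-stamp function induces the ranking. The only cosmetic difference lies in how ties are broken in the converse: the paper perturbs the time-stamps so that distinct reticulation events receive distinct values and then reads off both the event partition and the order from $t$, whereas you keep $t$ fixed, observe that it is constant on the intrinsic $\mathcal{R}$-classes, and pass to a linear extension of the induced weak order --- two equivalent pieces of bookkeeping.
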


Although temporal tree-child networks and RTCNs are closely related,
the two should not be mistaken:
a RTCN contains more information
than the corresponding temporal tree-child network, and a slightly
different information than
this network together with its time-stamp function (note that there is no
requirement that the time-stamps of vertices that belong to different events be
distinct in Definition~\ref{defTemporal}). RTCNs should be thought of
as being to temporal tree-child networks what ranked trees are to trees.

Let us now recall the notion of normal network, as introduced
by~\cite{Willson2007BMBa}.

\begin{definition} \label{defNormalNetwork}
An edge $\vec{uv}$ is said to be \emph{redundant} (or a \emph{shortcut}) if
there exists a directed path from $u$ to $v$ that does not contain $\vec{uv}$
\revision{(see Figure~\ref{figRTCN00})}.
A \emph{normal network} is a tree-child network that has no redundant edges.
\end{definition}

It is well-known and not too hard to see that every temporal tree-child network
is a normal network (see e.g.\ Proposition~10.12 in \cite{Steel2016}). As a
result, rankable tree-child networks are normal networks, in the sense that if
$(N, \prec)$ is a RTCN then $N$ is a normal network.
Since by Theorem~1.4 in \cite{McDiarmid2015AnnComb} the fraction of
tree-child networks (leaf-labeled or vertex-labeled alike) that are also
normal networks goes to zero as their number of vertices goes to infinity,
this proves the next proposition.

\begin{proposition} \label{propRTCNPropRankable}
\revision{Let $\tilde{C}_n$ be the number of tree-child networks with
$n$ labeled vertices, and let $\tilde{R}_n$ be the number of those
tree-child networks that are rankable. Then, as $n$ goes to infinity,
$\tilde{R}_n / \tilde{C}_n \to 0$.}

\revision{Similarly, the fraction $R_\ell / C_\ell$ of leaf-labeled tree-child
networks that are rankable goes to 0 as $\ell$ goes to infinity.}
\end{proposition}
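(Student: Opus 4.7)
The plan is to chain together the three observations that the authors have already laid out immediately before stating the proposition, so this is a very short proof. The key inclusion is: every rankable tree-child network is normal. I would first justify this inclusion in detail, then invoke the cited enumerative result of McDiarmid--Semple--Welsh.

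For the inclusion, I would argue as follows. Let $N$ be a tree-child network that admits a ranking $\prec$, so that $(N,\prec)$ is a RTCN. As noted by the authors in the discussion preceding Definition~\ref{defNormalNetwork}, one obtains a valid time-stamp function on $N$ by setting $t(u)=k$ whenever $\bar u$ is the $k$-th event under $\prec$ (and $t(\rho)=0$ for the root); this shows $N$ is temporal in the sense of Definition~\ref{defTemporal}. I would then cite Proposition~10.12 of \cite{Steel2016}, which states that any temporal tree-child network is normal, to conclude that $N$ has no redundant edges. Thus any rankable tree-child network is normal, and hence $\tilde R_n$ is bounded above by the number of labeled normal networks on $n$ vertices, and likewise $R_\ell$ is bounded above by the number of leaf-labeled normal networks on $\ell$ leaves.

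For the asymptotic conclusion, I would invoke Theorem~1.4 of \cite{McDiarmid2015AnnComb}, which states exactly that the proportion of (vertex- or leaf-labeled) tree-child networks that are normal tends to $0$. Combined with the inclusion above this gives $\tilde R_n/\tilde C_n\to 0$ and $R_\ell/C_\ell\to 0$.

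There is no real obstacle: the only place where a little care is needed is ensuring that the time-stamp function extracted from the ranking genuinely satisfies both conditions of Definition~\ref{defTemporal}, i.e.\ that reticulation edges get equal time-stamps at their endpoints and tree edges get strictly increasing ones. This follows because the two parents and the child of any reticulation vertex belong to the same event (they are $\mathcal{R}$-related through the reticulation edges and the shared child), whereas the tail and head of a tree edge $\vec{uv}$ satisfy $u\leadsto v$ and so $\bar u\prec\bar v$ by the compatibility condition in Definition~\ref{defRTCN}. Once this bookkeeping is in place the rest of the proof is just the two quoted results.
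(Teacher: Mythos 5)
Your proof is correct and follows essentially the same route as the paper, which also proves the proposition by chaining rankable $\Rightarrow$ temporal (via the rank-valued time-stamp function) $\Rightarrow$ normal (Proposition~10.12 of \cite{Steel2016}) and then invoking Theorem~1.4 of \cite{McDiarmid2015AnnComb}. One small phrasing slip in your final paragraph: the reticulation event consists of the two parents together with the reticulation vertex itself (their shared child), not the reticulation vertex's own child --- though your parenthetical about the $\mathcal{R}$-relations shows you have the right picture, and this is exactly what makes $t$ constant on each reticulation edge.
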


\subsection{Main results} \label{secRTCNMainResults}

All the results presented in this paper concern leaf-labeled RTCNs \revision{--
that is, the leaves carry labels that uniquely identify them,
but the internal vertices do not.}
Note that the number of reticulations of a RTCN with
$\ell$ leaves and $b$ branchings is $r = \ell - b - 1$.

In Section~\ref{secRTCNEnum}, we give two constructions of RTCNs: 
one in backward time and one in forward time.
Each of these constructions yields a proof of the following result.

\begin{reptheorem}{thmEnumRTCN}
The number of ranked tree-child networks with
$\ell$ labeled leaves and $b$~branchings is
\[
  C_{\ell, b} \;=\;
  {\ell - 1 \brack b}\, T_\ell\,,
\]
where $T_\ell = \ell!\,(\ell - 1)! /\, 2^{\ell - 1}$\! is the number of ranked
trees with $\ell$ labeled leaves and~${\ell - 1 \brack b}$ is the number of
permutations of \,$\Set{1, \ldots, \ell - 1}$ with $b$ cycles
(these quantities are known as the unsigned Stirling numbers of the first
kind).
\end{reptheorem}

\revision{In particular, Theorem~\ref{thmEnumRTCN} implies that the number of
branchings of a uniform RTCN with $\ell$ labeled leaves is asymptotically
Poisson with mean $\sim\log(\ell)$, and thus satisfies a central limit theorem (see
Corollary~\ref{propLawProfile})}.

The backward-time and forward-time constructions also
provide simple procedures to sample leaf-labeled RTCNs, be it:
\begin{itemize}
  \item uniformly at random;
  \item uniformly at random conditional on their total number of reticulations;
  \item uniformly at random conditional on which events are reticulations.
\end{itemize}

\revision{The rest of our study focuses on the properties of the uniform
distribution on the set of RTCNs with $\ell$ labeled leaves.} One of
the interesting characteristics of uniform leaf-labeled RTCNs
is their close relationship with uniform
leaf-labeled ranked trees. This is detailed in Section~\ref{secRTCNLinkTrees},
where we also explain how to sample a uniform RTCN conditional on displaying a
given tree \revision{(i.e.\ on ``containing'' the tree, in the sense of
Definition~\ref{defDisplay})}.

Two of the most basic statistics of binary phylogenetic networks are their
numbers of ``cherries'' and of ``tridents'' (we will define these
precisely later, but briefly,  a cherry is a vertex having two leaves as
children; a trident is a reticulation event with three leaves as children).
While almost nothing is known about the distribution of these two
quantities in uniform tree-child networks, they prove very tractable in
uniform RTCNs.  Explicit expressions for their mean and variance are given in
Section~\ref{secRTCNStats}, where we also establish the following theorem.

\begin{reptheorem}{thmRTCNCherriesRetCherries}
Let $\kappa_\ell$ be the number of cherries of a uniform RTCN with
$\ell$ labeled leaves and $\chi_\ell$ be its number of tridents.
Then, as $\ell \to \infty$,
\begin{mathlist}
  \item $\displaystyle \kappa_\ell \;\tendsto[\,d\,]{}\;
    \mathrm{Poisson}\big(\tfrac{1}{4}\big)$.
  \item $\displaystyle \chi_\ell \,/\, \ell \;\tendsto[\,\P\,]{}\; \tfrac{1}{7}$.
\end{mathlist}
\end{reptheorem}

Sections~\ref{secRTCNRandPaths}
and~\ref{secRTCNAncestryLeaf}
contain our most informative results about the structure of uniform RTCNs:
in Section~\ref{secRTCNRandPaths} we study the length of some typical paths
joining the root to the leaf set of uniform RTCNs and prove the following
theorem.

\begin{reptheorem}{thmRTCNRandPaths}
Let $\nu$ be a uniform RTCN with $\ell$ labeled leaves and let
\begin{itemize}
  \item \revision{$\gammaDown$ be the path taken by a random walk going from the
    root of $\nu$ to its leaves, respecting the direction of the edges and
    choosing each of the two outgoing edges with equal probability when it
    reaches a tree-vertex.}
  \item \revision{$\gammaUp$ be the path taken by a random walk going from a
    uniformly chosen leaf of $\nu$ to its root, following the edges in reverse
    direction and choosing each of the two incoming edges with equal
    probability when it reaches a reticulation.}
\end{itemize}
Then, letting \,$\mathrm{length}(\,\cdot\,)$ denote the length of these paths, not
counting reticulation edges, there exist two constants $c^{_\downarrow}$ and
$c^{_\uparrow}$ such that, as $\ell\to \infty$,
\begin{mathlist}
\item $\displaystyle \mathrm{length}(\gammaDown) \;\approx\;
  \mathrm{Poisson}(2\log\ell + c^{_\downarrow})$  
  \item $\displaystyle \mathrm{length}(\gammaUp) \;\approx\;
    \mathrm{Poisson}(3\log\ell + c^{_\uparrow})$ 
\end{mathlist}
in the sense that the total variation distance between these distributions
goes to~0.
\end{reptheorem}

\begin{remark}
The proof of Theorem~\ref{thmRTCNRandPaths} can be adapted to the Yule model,
which corresponds to uniform ranked trees. In that case,
$\mathrm{length}(\gamma^\downarrow) \approx \mathrm{Poisson}(\log\ell+a)$ and
$\mathrm{length}(\gamma^\uparrow) \approx 
\mathrm{Poisson}(2 \log\ell+b)$, for suitable constants $a,b$.
\end{remark}

Finally, in Section~\ref{secRTCNAncestryLeaf} we study the
number of lineages in the ancestry of a leaf, i.e.\ in the subgraph
consisting of all paths joining this leaf to the root, as illustrated
in Figure~\ref{figRTCNDefAncestry} \revision{and defined formally in
Definition~\ref{defLineages}.} Starting from the leaves and going towards
the root, one event of the RTCN after the other, this number of lineages will
on average start by increasing but will eventually decrease as we get nearer
the root and the total number of lineages of the RTCN itself decreases.

\begin{figure}[h!]
  \centering
  \captionsetup{width=0.40\linewidth}
  \includegraphics[width=0.40\linewidth]{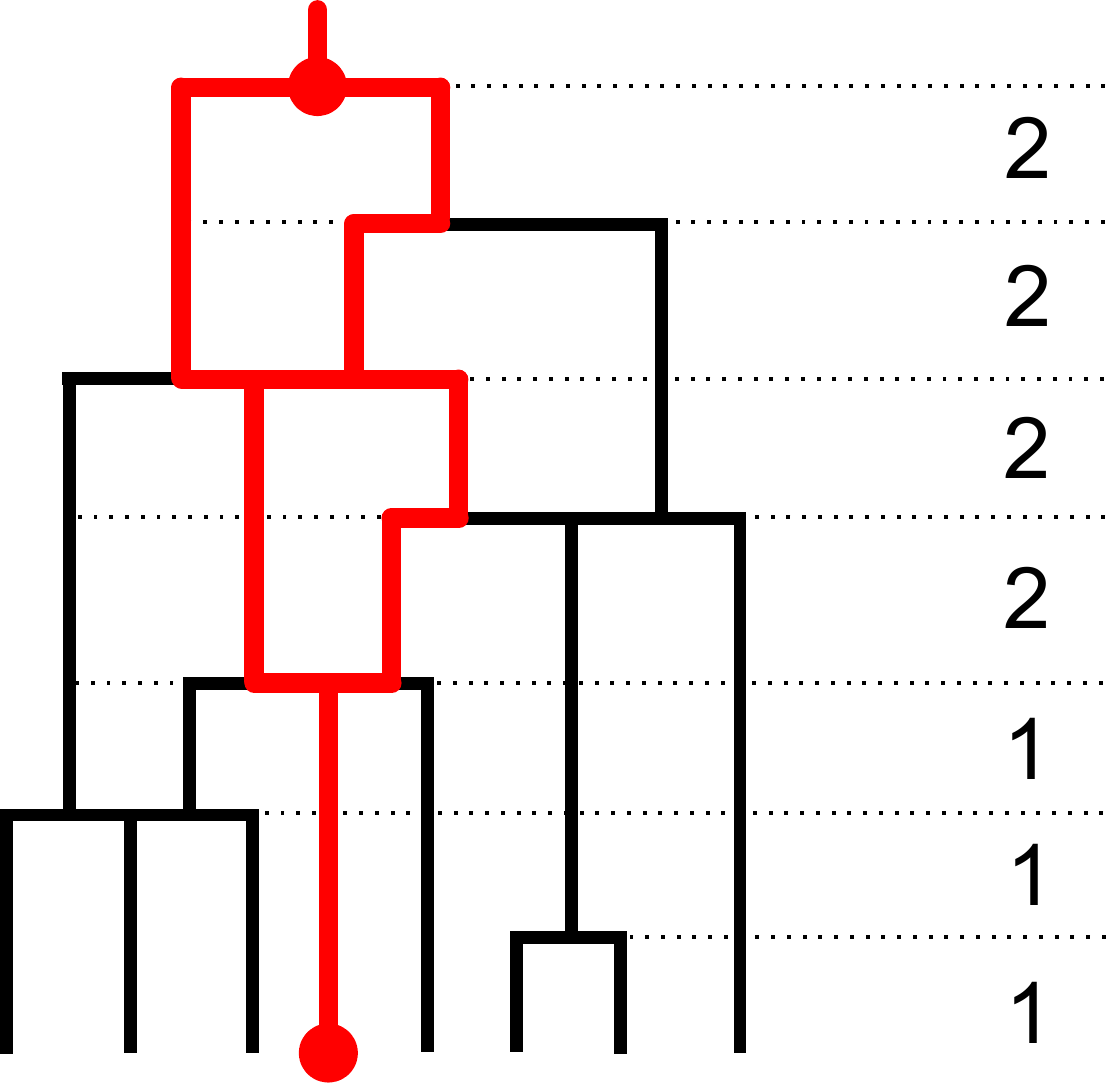}
  \caption{In red, the ancestry of a leaf and on the right the process
  counting the number of lineages in this ancestry.}  
\label{figRTCNDefAncestry}
\end{figure}

Let $X^{(\ell)} = (X_k^{(\ell)},\, 0 \leq k \leq \ell - 2)$
denote the process counting these lineages. What can we say about the
behavior of $X^{(\ell)}$ as $\ell$ goes to infinity?
As it turns out, $X_k^{(\ell)}$ remains quite small as long as
$k = \lfloor{x\ell}\rfloor$ with $0 < x < 1$. However, when we reach the regime
$k = \lfloor{\ell - x \sqrt{\ell}}\rfloor$, $X_k^{(\ell)}$ starts to increase
substantially, until a point where it will decrease suddenly.
Moreover, even though the
properly rescaled trajectories of $X^{(\ell)}$ seem to become smooth as $\ell
\to \infty$, they do not become deterministic. In fact, simulations presented
in Section~\ref{secRTCNSimus} suggest that they behave as a deterministic
process with a random initial condition.

Another natural way to study the asymptotics of
$X^{(\ell)}$, instead of restricting ourselves to
an appropriate time-window, is to consider the embedded process
$\tilde{X}^{(\ell)}$, that is, to ignore the steps in which $X^{(\ell)}$
does not change. As illustrated in Figure~\ref{figRTCNXtilde},
simulations suggest that the scaling limit of $\tilde{X}^{(\ell)}$ is a
remarkably regular deterministic process with a random shape parameter.

\begin{figure}[h!]
  \centering
  \captionsetup{width=\linewidth}
  \includegraphics[width=\linewidth]{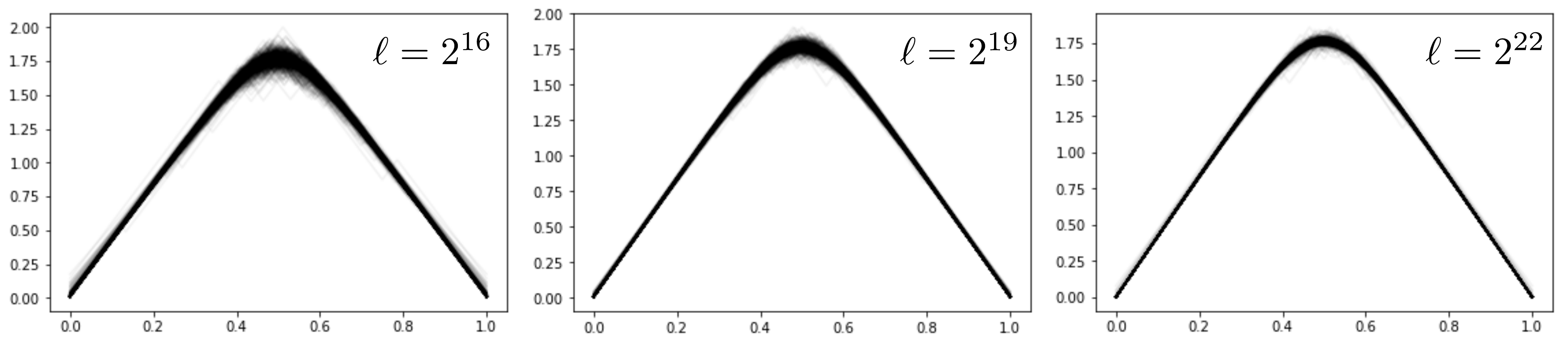}
  \caption{500 superimposed trajectories of the process
  $\tilde{X}^{(\ell)}$, rescaled so that their domain is $\ClosedInterval{0, 1}$
  and that the total area under each curve is equal to 1.}  
\label{figRTCNXtilde}
\end{figure}

We were not able to complete the proof of the convergence of these rescaled
processes, so our discussion relies in part on simulations and heuristics.
Below are some of our results and our main conjecture.

\begin{repproposition}{propRTCNExpecAncesLineage}
For all $\ell \geq 2$ and $k$ such that $0 \leq k \leq \ell - 2$,
\[
  \frac{\ell}{\ell - k + 1}
  \mleft(1 - \frac{2 k}{(\ell - k)(\ell - k - 1)}\mright) \;\leq\;
  \Expec{X_k^{(\ell)}} \;\leq\;
  \frac{\ell}{\ell - k}\ .
\]
As a result, for $M > 2$ and all $\ell$ large enough, for all $\epsilon > 0$,
\[
  (1 - \epsilon) \frac{1}{M} \mleft(1 - \frac{2}{M^2}\mright) \;\leq\;
  \Expec{\tfrac{1}{\sqrt{\ell}}
         X_{\lfloor\ell - M \sqrt{\ell}\rfloor}^{(\ell)}} \;\leq\;
  \frac{1}{M} \,, 
\]
so that the sequence of random variables
$\frac{1}{\sqrt{\ell}} X_{\lfloor\ell - M \sqrt{\ell}\rfloor}^{(\ell)}$
is tight and bounded away from 0 in $L^1$.
\end{repproposition}

\begin{repconjecture}{conjRTCNMain}
There exists a random variable $W_M > 0$ such that
\[
  \tfrac{1}{\sqrt{\ell}} X^{(\ell)}_{\lfloor\ell - M \sqrt{\ell}\rfloor}
  \tendsto[d]{\ell \to \infty} W_M\, .
\]
\end{repconjecture}

\begin{repproposition}{propRTCNConvergeDeter}
If Conjecture~\ref{conjRTCNMain} holds, then
for all $\epsilon$ such that $0 < \epsilon < 1$, as $\ell \to \infty$,
\[
  \Big(\tfrac{1}{M\sqrt{\ell}}
  X^{(\ell)}_{\lfloor \ell -M \sqrt{\ell}(1 - t)\rfloor}, \;
  t \in \ClosedInterval{0, 1 - \epsilon}\Big)
  \;\implies\;
  \big(y(t, C_M),\; t \in \ClosedInterval{0, 1 - \epsilon}\big)
\]
where $\implies$ denotes convergence in distribution in the Skorokhod space,
\[
  y(t, C_M) \;=\; \frac{1 - t}{C_M \cdot(1 - t)^2 + 1}
\]
and $C_M = M/W_M - 1$.
\end{repproposition}


\section{Counting and generating RTCNs} \label{secRTCNEnum}

\subsection{Backward-time construction of RTCNs}
\label{secRTCNBackwardInTimeEncoding}

Let us start by recalling that there is a simple way to
label the internal vertices of any leaf-labeled tree-child network (or any
leaf-labeled DAG), namely by labeling each internal vertex with the set of
labels of its children.

\begin{definition} Given a tree-child network with leaf set
  $\partial V = \Set{1, \ldots, \ell}$, the associated
  \emph{canonical labeling} is the function $\xi$ such that
  \begin{mathlist}
    \item $\forall v \in \partial V$, $\xi(v) = v$.
    \item $\forall v \in \Vr$, $\xi(v) = \Set{\xi(u) \suchthat v \to u}$.
    \qedhere
  \end{mathlist}
\end{definition}

While the canonical labeling of a tree-child network encodes it
unambiguously (the whole network can be recovered from the label of
the root), this is not the case for RTCNs because the information about the
order of the events is missing. One way to retain this information is to encode
RTCNs using the process $(P_k,\, 1 \leq k \leq \ell)$ defined as follows.

First, given a set $P = \Set{\xi_1, \ldots, \xi_m}$ of labels of vertices of
$(N, \prec)$, define the two operations:
\begin{itemize}
  \item $\displaystyle\mathrm{coal}(P, \Set{\xi_i, \xi_j}) = 
    \big(P \setminus \Set{\xi_i, \xi_j}\big) \cup
    \Set*[\big]{\Set{\xi_i, \xi_j}}$  
  \item $\displaystyle\mathrm{ret}(P, \xi_i, \Set{\xi_j, \xi_k}) = 
    \big(P \setminus \Set{\xi_i, \xi_j, \xi_k}\big) \cup
    \Set*[\big]{\Set{\{\xi_i\}, \xi_j},\, \Set{\{\xi_i\}, \xi_k}}$  
\end{itemize}

Now, let $U_1 \prec \cdots \prec U_{\ell - 1}$ denote the events of
$(N, \prec)$. Then, starting from
$P_1 = \Set{1, \ldots, \ell}$ and going backwards in time,
for $k = 1$ to $\ell - 1$:
\begin{itemize}
  \item If $U_{\ell - k}$ is the coalescence of $v$ and $w$,
    i.e.\ if there exists $u$ such that
    $U_{\ell - k} = \{u\}$ and $\Gamma_{\!\mathrm{out}}(u) = \Set{v, w}$, then
    let $P_{k + 1} = \mathrm{coal}(P_k, \Set{\xi(v), \xi(w)})$.
  \item If $U_{\ell - k}$ is the reticulation of $u$ and $v$ with the hybrid $h$, 
    i.e.\ if $U_{\ell - k} = \Set{u', h', v'}$ with 
  $\Gamma_{\!\mathrm{out}}(u') = \{u, h'\}$, 
  $\Gamma_{\!\mathrm{out}}(v') = \{v, h'\}$ and
  $\Gamma_{\!\mathrm{out}}(h') = \{h\}$,
    then let ${P_{k + 1} = \mathrm{ret}(P_k,\, \xi(h),\, \Set{\xi(u), \xi(v)})}$.
\end{itemize}
The result of procedure is illustrated in Figure~\ref{figRTCN02}.
Note that the RTCN that produced a process
$(P_k, \, 1 \leq k \leq \ell)$ can unambiguously be recovered
from that process.

\begin{figure}[h!]
  \centering
  \captionsetup{width=0.95\linewidth}
  \includegraphics[width=0.8\linewidth]{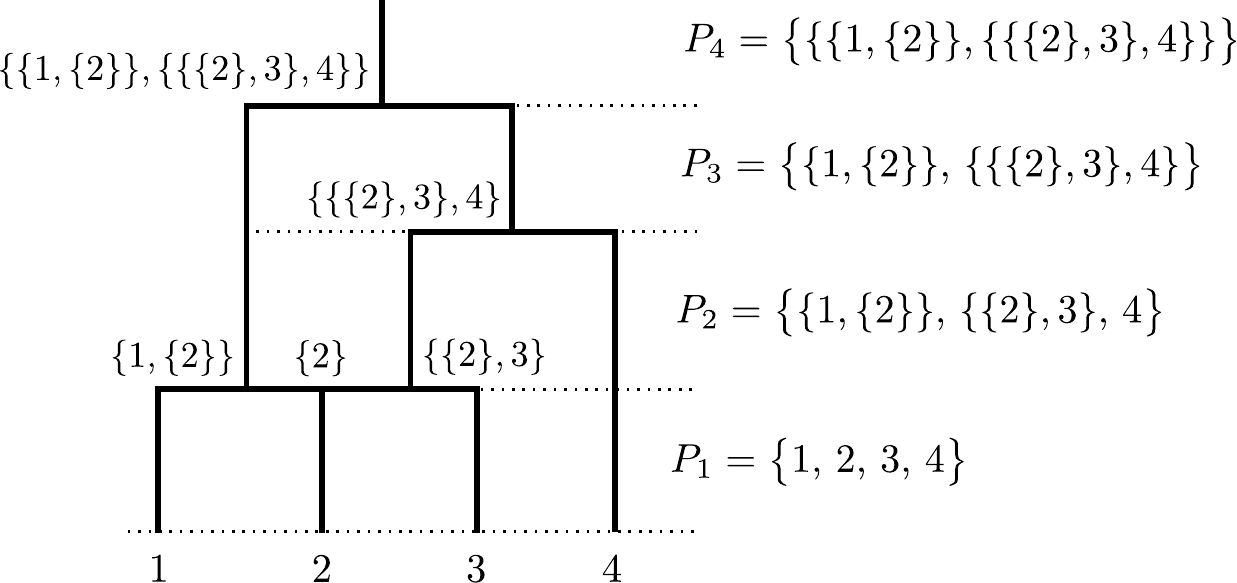}
  \caption{A RTCN with its canonical labeling and the associated process
  $(P_k, \, 1 \leq k \leq \ell)$.}  
\label{figRTCN02}
\end{figure}

In order to count RTCNs based on their number of reticulations
(or, equivalently, branchings, since both are linked by $r  + b = \ell - 1$),
we need to introduce the notion of profile of a RTCN.

\begin{definition} \label{defRTCNProfile}
Let $\nu$ be a RTCN and let $U_1 \prec \cdots \prec U_{\ell - 1}$ denote
its events. The \emph{profile} of~$\nu$
is the vector $\mathbf{q} = (q_1, \ldots, q_{\ell - 1})$ defined by
\[
  q_k =
  \begin{dcases}
    1, \text{ if $U_k$ is a branching;} \\
    0, \text{ otherwise} \, .
  \end{dcases}
\]
\revision{Note that we always have $q_1 = 1$.}
\end{definition}

We are now in position to prove our main result concerning the enumeration of
RTCNs.

\begin{theorem} \label{thmEnumRTCN}
The number of RTCNs with profile $\mathbf{q}$ is
\[
  F(\mathbf{q}) \,=\;
  \prod_{k = 1}^{\ell - 1}\big(q_k + (k - 1)(1 -q_k)\big)\, T_\ell  \, ,
\]
where $T_\ell = \ell!\,(\ell - 1)!\, / 2^{\ell - 1}$.
As a result, the number of RTCNs with $\ell$ labeled leaves and $b$
branchings is
\[
  C_{\ell, b} \;=\;
  {\ell - 1 \brack b}\, T_\ell\,,
\]
where the bracket denotes the unsigned Stirling numbers of the first kind.
\end{theorem}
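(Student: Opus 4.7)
My plan is to use the backward-time encoding $(P_k,\, 1 \leq k \leq \ell)$ introduced just above: since each RTCN is in unambiguous correspondence with a sequence of $\mathrm{coal}$ and $\mathrm{ret}$ operations transforming $P_1 = \{1,\ldots,\ell\}$ into a singleton $P_\ell$, counting RTCNs with a prescribed profile $\mathbf{q}$ reduces to counting such operation sequences. At backward step $k$, the set $P_k$ contains $\ell - k + 1$ lineages and the operation performed has the type dictated by $q_{\ell-k}$.

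The first step is to count the choices available at each backward step. If $U_{\ell-k}$ is a branching, then $\mathrm{coal}$ requires picking an unordered pair among the $\ell - k + 1$ current lineages, giving $\binom{\ell - k + 1}{2}$ possibilities. If $U_{\ell-k}$ is a reticulation, then $\mathrm{ret}$ requires picking a distinguished hybrid-child lineage together with an unordered pair of parent lineages among the remaining $\ell - k$, for a total of $(\ell - k + 1)\binom{\ell - k}{2}$ possibilities. A crucial observation is that the tree-child constraint imposes no hidden restriction on these choices: every lineage crossing the time strictly between two consecutive events is a tree edge, since reticulation edges live entirely inside a single reticulation event and therefore never appear in any $P_k$. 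Reindexing by $j = \ell - k$, the factor contributed by event $U_j$ is $\tfrac{j(j+1)}{2}$ when $q_j = 1$ and $\tfrac{j(j+1)(j-1)}{2}$ when $q_j = 0$.

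Multiplying these factors over $j = 1, \ldots, \ell-1$ and pulling out the common piece $\prod_{j=1}^{\ell-1} \tfrac{j(j+1)}{2} = \ell!(\ell-1)!/2^{\ell-1} = T_\ell$ (which is precisely what the all-branching case gives, recovering the classical ranked-tree count) immediately yields $F(\mathbf{q}) = T_\ell \prod_{j=1}^{\ell-1}\bigl(q_j + (j-1)(1 - q_j)\bigr)$, as stated. The constraint $q_1 = 1$ emerges automatically: the last backward step involves only two lineages, so no reticulation is possible there, and consistently the $j=1$ factor vanishes whenever $q_1 = 0$. To derive $C_{\ell,b}$, I then sum $F(\mathbf{q})$ over profiles with exactly $b$ ones using the generating-function identity
\[
\sum_{\mathbf{q}} x^{\sum_j q_j} \prod_{j=1}^{\ell-1}\bigl(q_j + (j-1)(1 - q_j)\bigr) \;=\; \prod_{j=1}^{\ell-1}(x + j - 1) \;=\; x(x+1)\cdots(x + \ell - 2),
\]
whose coefficient of $x^b$ is by definition the unsigned Stirling number ${\ell - 1 \brack b}$, giving $C_{\ell,b} = {\ell-1 \brack b}\,T_\ell$.

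The main anticipated obstacle is not the counting itself, which becomes essentially mechanical once the encoding is in place, but rather the careful verification that the backward procedure is a genuine bijection: namely, that every operation sequence respecting $\mathbf{q}$ really does give rise to a valid RTCN (the tree-child condition being inherited from the structure of $\mathrm{ret}$, and the ranking compatibility from the order in which operations are applied), and conversely that distinct sequences produce distinct RTCNs.
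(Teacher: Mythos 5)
Your proposal is correct and is essentially the paper's own proof: it counts the same backward-time processes $(P_k)$, with your step-$k$ counts $\binom{j+1}{2}$ and $(j+1)\binom{j}{2}$ (for $j=\ell-k$) being exactly the paper's $\binom{k+1}{2}$ coalescences and $(k-1)\binom{k+1}{2}$ reticulations, followed by the same factoring out of $T_\ell$ and the same identification of $\sum_{|\mathbf{q}|=b}\prod_k\big(q_k+(k-1)(1-q_k)\big)$ with the coefficient of $x^b$ in $x(x+1)\cdots(x+\ell-2)$. The bijectivity of the encoding, which you flag as the main obstacle, is likewise taken as given in the paper (it is asserted when the process $(P_k)$ is introduced), so your argument matches the paper's in both structure and level of rigor.
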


\begin{proof}
Let us count the
processes $(P_k,\, 1 \leq k \leq \ell)$ with profile $\mathbf{q}$.
When going from $k + 1$ to $k$ lineages, i.e.\ when considering
event $U_k$, there are:
\begin{itemize}
  \item $\binom{k + 1}{2}$ possible coalescence events;
  \item $(k - 1) \binom{k + 1}{2}$ possible reticulation events.
\end{itemize}
Therefore,
\[
  F(\mathbf{q})
  \;=\; \prod_{k = 1}^{\ell - 1}\Big( \tbinom{k + 1}{2} q_k +
                               (k - 1) \tbinom{k + 1}{2} (1 - q_k)\Big)\, .
\]
Factoring out the binomial coefficients, we get the first part of the
theorem. Then, summing over all profiles with $b$ branchings,
\[
  C_{\ell, b}  \;=\;
  \mleft(\sum_{|\mathbf{q}| = b} \prod_{k = 1}^{\ell - 1}
  \big(q_k + (k - 1)(1 - q_k)\big)\mright)
  \times T_\ell \, .
\]
The first factor can be seen to be
the coefficient of degree~$b$ of the polynomial
$P(X) = (X + 0)(X + 1) \cdots (X + \ell - 2)$. Since by definition
of the unsigned Stirling
numbers of the first kind, 
\[
  P(X) = \sum_{b = 0}^{\ell - 1} {\ell - 1 \brack b} X^b \, ,
\]
this concludes the proof.
\end{proof}

Theorem~\ref{thmEnumRTCN} and its proof lead to several further results, which we now list.

\begin{corollary} \label{corCountRTCN}
For $\ell \geq 2$, the number $C_\ell$ of RTCNs with $\ell$ labeled leaves is
\[
  C_\ell = \frac{\ell!\,{(\ell - 1) !\,}^2}{2^{\ell - 1}} \, .
\]
\end{corollary}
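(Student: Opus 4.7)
The plan is to obtain $C_\ell$ as a direct corollary of Theorem~\ref{thmEnumRTCN} by summing $C_{\ell,b}$ over all possible numbers of branchings. Since a RTCN with $\ell$ labeled leaves has between $0$ and $\ell-1$ branchings (with the total number of events being $\ell-1$), we have
\[
  C_\ell \;=\; \sum_{b=0}^{\ell-1} C_{\ell, b} \;=\; T_\ell \sum_{b=0}^{\ell-1} {\ell-1 \brack b}\,.
\]

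The key step is then to recognize that the row sums of the unsigned Stirling numbers of the first kind satisfy the classical identity
\[
  \sum_{b=0}^{n} {n \brack b} \;=\; n!\,,
\]
which follows immediately from the combinatorial interpretation of ${n \brack b}$ as the number of permutations of $\{1,\dots,n\}$ with exactly $b$ cycles (the sum partitions the symmetric group $S_n$ by cycle count). Alternatively, it drops out from the generating identity $P(X) = \sum_b {n \brack b} X^b = X(X+1)\cdots(X+n-1)$ used at the end of the proof of Theorem~\ref{thmEnumRTCN}, evaluated at $X = 1$.

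Applying this with $n = \ell - 1$ gives $\sum_{b=0}^{\ell-1} {\ell-1 \brack b} = (\ell-1)!$, so
\[
  C_\ell \;=\; T_\ell \cdot (\ell-1)! \;=\; \frac{\ell!\,(\ell-1)!}{2^{\ell-1}} \cdot (\ell-1)! \;=\; \frac{\ell!\,{(\ell-1)!}^2}{2^{\ell-1}}\,.
\]
There is no real obstacle here: the corollary is a one-line consequence of the theorem together with a standard Stirling-number identity, and the only thing worth remarking upon is which of the two justifications (combinatorial or polynomial) to cite.
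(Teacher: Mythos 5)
Your proof is correct and follows exactly the paper's route: summing $C_{\ell,b}$ over $b$ and applying the row-sum identity $\sum_{b=0}^{\ell-1}{\ell-1 \brack b} = (\ell-1)!$ for the unsigned Stirling numbers of the first kind. The only difference is that you spell out the justification of the identity (combinatorially, or via $P(1)$ from the proof of Theorem~\ref{thmEnumRTCN}), which the paper takes as known.
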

\begin{proof}[Proofs]
This follows from the expression for $C_{\ell, b}$ in Theorem \ref{thmEnumRTCN}
upon applying the identity
$\sum_{b = 0}^{\ell - 1}{\ell - 1 \brack b} = (\ell - 1)!$ 
\end{proof}

\begin{corollary}
The following procedure yields a uniform RTCN
with profile~$\mathbf{q}$.
Starting from $\ell$ labeled lineages, for $k = \ell - 1$ down to $1$:

\begin{tabular}{lll}
  & $\bullet$\; If $q_k = 1$:&
      let two lineages coalesce, uniformly at random \\
  & & among all $\binom{k + 1}{2}$ possibilities. \\[2ex]
  & $\bullet$\; If $q_k = 0$:&
      let three lineages reticulate, uniformly at random \\
  & & among all ${(k - 1)\binom{k + 1}{2}}$ possibilities.
\end{tabular}
\end{corollary}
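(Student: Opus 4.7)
The plan is to reduce the claim to a counting argument via the backward-time encoding $(P_k,\, 1 \leq k \leq \ell)$ of Section~\ref{secRTCNBackwardInTimeEncoding}. As noted just before Definition~\ref{defRTCNProfile}, this encoding is a bijection between RTCNs and valid processes, and the profile of the RTCN coincides with the profile of its encoding. It therefore suffices to show that the procedure produces each of the $F(\mathbf{q})$ valid processes with profile $\mathbf{q}$ with the same probability, namely $1/F(\mathbf{q})$.

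The key observation is that the procedure is essentially an algorithmic description of the encoding run in reverse. At step $k$, choosing an event $U_k$ uniformly at random among all admissible possibilities amounts to sampling uniformly from $\binom{k+1}{2}$ coalescences when $q_k = 1$, and from $(k-1)\binom{k+1}{2}$ reticulations when $q_k = 0$. These counts are precisely the factors
\[
\tbinom{k+1}{2}\, q_k + (k-1)\tbinom{k+1}{2}\,(1 - q_k)
\]
whose product over $k$ equals $F(\mathbf{q})$, as computed in the proof of Theorem~\ref{thmEnumRTCN}. By independence of successive choices, every valid process with profile $\mathbf{q}$ is produced with probability equal to the reciprocal of this product, that is $1/F(\mathbf{q})$; by the bijection, so is every RTCN with profile $\mathbf{q}$.

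The only point that really deserves an explicit verification is that the sample space of the procedure coincides with the set of valid processes of profile $\mathbf{q}$: every outcome of the procedure must encode a \emph{bona fide} RTCN with that profile, and conversely every such process must be a possible outcome. Both directions follow immediately from the definitions of the operations $\mathrm{coal}$ and $\mathrm{ret}$ together with the profile constraint, so I do not expect any serious obstacle — the corollary is essentially a repackaging of the enumeration performed in the proof of Theorem~\ref{thmEnumRTCN}.
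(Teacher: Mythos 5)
Your proposal is correct and matches the paper's own reasoning: the paper likewise proves this corollary (and Proposition~\ref{corUniformRTCNBackward}) by observing that realizations of the procedure correspond to the processes $(P_k,\,1\leq k\leq\ell)$, which uniquely encode leaf-labeled RTCNs, with the per-step counts $\binom{k+1}{2}$ and $(k-1)\binom{k+1}{2}$ taken directly from the proof of Theorem~\ref{thmEnumRTCN} so that each RTCN of profile $\mathbf{q}$ arises with probability $1/F(\mathbf{q})$. No substantive difference from the paper's argument.
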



\begin{proposition} \label{corUniformRTCNBackward}
Starting from $\ell$ labeled lineages, let pairs of lineages and triplets
of lineages reticulate, choosing what to do uniformly among all possibilities
at each step and stopping when there is only one lineage left. Then,
the resulting RTCN has the uniform distribution on the set of RTCNs with
$\ell$ labeled leaves.
\end{proposition}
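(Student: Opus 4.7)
The plan is to combine two ingredients already visible in the excerpt: (i) the backward-time encoding from Section~\ref{secRTCNBackwardInTimeEncoding} sets up a bijection between sequences of elementary moves (coalescences and reticulations) and RTCNs with $\ell$ labelled leaves, and (ii) the number of available moves at each step is constant, so that a direct product computation matches the counting formula of Corollary~\ref{corCountRTCN}.

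First I would verify the move count. When the procedure has $k+1$ active lineages, there are $\binom{k+1}{2}$ ways to coalesce a pair and $(k-1)\binom{k+1}{2}$ ways to create a reticulation event (pick the two ``parent'' lineages that host the reticulation, then pick the remaining lineage whose parent is the new hybrid vertex), for a total of
\[
  \binom{k+1}{2} + (k-1)\binom{k+1}{2} \;=\; k\binom{k+1}{2}
\]
elementary moves. Hence under the rule of choosing uniformly among all possibilities, the probability that the procedure performs any given prescribed sequence of moves is
\[
  \prod_{k=1}^{\ell-1}\frac{1}{k\binom{k+1}{2}}.
\]

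Second, I would invoke the backward-time encoding: the sequence of moves performed by the procedure is exactly the process $(P_k,\,1\leq k\leq \ell)$ associated to the resulting RTCN, and Section~\ref{secRTCNBackwardInTimeEncoding} observes that a RTCN can be recovered unambiguously from this process. In particular, distinct sequences of moves yield distinct RTCNs, so each RTCN with $\ell$ labelled leaves is produced by the procedure along a unique sequence of choices, and therefore with the probability displayed above.

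Third, I would check that this is the uniform distribution by a direct calculation:
\[
  \prod_{k=1}^{\ell-1} k\binom{k+1}{2}
  \;=\;\prod_{k=1}^{\ell-1}\frac{k^2(k+1)}{2}
  \;=\;\frac{\ell!\,{(\ell-1)!\,}^2}{2^{\ell-1}}
  \;=\;C_\ell,
\]
the last equality being Corollary~\ref{corCountRTCN}. Thus every RTCN is produced with probability $1/C_\ell$, which is the claim.

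There is no real obstacle here; the only point that deserves care is the bijectivity step, but it is already handled by the discussion following the definition of $(P_k)$ in Section~\ref{secRTCNBackwardInTimeEncoding}, so a one-sentence reminder suffices. An alternative, slightly longer route would condition on the profile $\mathbf{q}$ realised by the procedure and apply the preceding corollary together with Theorem~\ref{thmEnumRTCN}, but the direct computation above is more transparent.
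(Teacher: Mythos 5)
Your proof is correct and follows essentially the same route as the paper's: the paper's own (one-line) argument likewise identifies realizations of the procedure with the processes $(P_k,\,1\leq k\leq \ell)$, which uniquely encode leaf-labeled RTCNs, the key implicit point being exactly the one you make explicit — that the number $k\binom{k+1}{2}$ of available moves at each step is independent of past choices, so all realizations are equiprobable. Your final computation that the product of move counts equals $C_\ell$ is a nice consistency check with Corollary~\ref{corCountRTCN} but is not logically needed once bijectivity and equiprobability are in place.
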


\begin{proof}[Proofs]
Proposition~\ref{corUniformRTCNBackward} follows by noting that the realization of
the procedure correspond to those of the process
$(P_k, 1 \leq k \leq \ell)$, which uniquely encodes every leaf-labeled RTCN.
\end{proof}

Finally, let us point out the following fact about the profile and the
number of branchings of uniform RTCNs.

\begin{corollary} \label{propLawProfile}
Let $\mathbf{q}$ be the profile of a uniform RTCN with $\ell$ labeled
leaves. Then,
\[
  \mathbf{q} \;\sim\; (X_1, \ldots, X_{\ell - 1})\,, 
\]
where $(X_1, \ldots, X_{\ell - 1})$ are independent Bernoulli variables
such that
\[
  \Prob{X_k = 1} \;=\; \frac{1}{k}\, .
\]
As a result, the number $B_\ell$ of branchings of a uniform
RTCN with $\ell$ leaves, which is distributed as the number of cycles of
a uniform permutation of $\Set{1, \ldots, \ell - 1}$, satisfies
\begin{mathlist}
\item $\Expec{B_\ell} = H_{\ell - 1}$, where
  $H_{\ell - 1} = \sum_{k = 1}^{\ell - 1} 1/k$ is the $(\ell-1)$-th
  harmonic number.
\item As $\ell \to \infty$,
  $d_{\mathrm{TV}}\big(B_\ell,\, \mathrm{Poisson}(H_{\ell - 1})\big) \to 0$.
  In particular, $\frac{B_\ell - \log \ell}{\sqrt{\log \ell}}
  \tendsto[\,d\,]{} \mathcal{N}(0, 1)$.
\end{mathlist}
\end{corollary}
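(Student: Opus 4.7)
The plan is to read the joint distribution of $\mathbf{q}$ directly off Theorem~\ref{thmEnumRTCN} and then reduce the rest to classical Poisson--Dirichlet / cycle-structure facts for random permutations.

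First, since the uniform RTCN assigns probability $F(\mathbf{q})/C_\ell$ to each profile $\mathbf{q}$, I would divide the explicit formula of Theorem~\ref{thmEnumRTCN} by $C_\ell = (\ell-1)!\,T_\ell$ (Corollary~\ref{corCountRTCN}) and regroup the $(\ell-1)!=\prod_{k=1}^{\ell-1}k$ in the denominator factor-by-factor with the product in the numerator. This yields
\[
\Prob{\mathbf{q}} \;=\; \prod_{k=1}^{\ell-1}\frac{q_k+(k-1)(1-q_k)}{k}
\;=\; \prod_{k=1}^{\ell-1}\Bigl(\tfrac{1}{k}\Bigr)^{\!q_k}\!\Bigl(\tfrac{k-1}{k}\Bigr)^{\!1-q_k},
\]
which is exactly the joint law of independent Bernoulli$(1/k)$ variables. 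In particular $X_1=1$ almost surely, matching $q_1=1$. This product form is the whole content of the first half of the corollary.

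Next, to identify $B_\ell=\sum_k q_k$ with the number of cycles of a uniform permutation of $\{1,\ldots,\ell-1\}$, I would invoke the classical Feller coupling (or equivalently the Chinese restaurant construction), which represents the cycle count of a uniform permutation of $\{1,\ldots,n\}$ as a sum of independent Bernoulli$(1/k)$'s for $k=1,\ldots,n$. Linearity of expectation immediately gives $\Expec{B_\ell}=\sum_{k=1}^{\ell-1} 1/k = H_{\ell-1}$.

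For the total-variation statement, I would apply the Barbour--Hall (Chen--Stein) bound for Poisson approximation of sums of independent Bernoullis:
\[
d_{\mathrm{TV}}\bigl(B_\ell,\,\mathrm{Poisson}(H_{\ell-1})\bigr)
\;\leq\; \frac{1-e^{-H_{\ell-1}}}{H_{\ell-1}}\sum_{k=1}^{\ell-1}\frac{1}{k^2}
\;\leq\; \frac{\pi^2/6}{H_{\ell-1}} \;\longrightarrow\; 0,
\]
using only that $\sum_{k\ge 1}1/k^2$ is finite while $H_{\ell-1}\to\infty$. Finally, the CLT follows by combining this vanishing TV bound with the classical CLT for a Poisson$(\lambda)$ variable $Z_\lambda$ as $\lambda\to\infty$ (namely $(Z_\lambda-\lambda)/\sqrt{\lambda}\Rightarrow\mathcal{N}(0,1)$), and by replacing $H_{\ell-1}$ with $\log\ell$ in both centering and normalization using $H_{\ell-1}=\log\ell+\gamma+o(1)$. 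No step here is really an obstacle: the only subtlety worth flagging is using the \emph{sharper} Barbour--Hall bound rather than the crude Le~Cam inequality $\sum p_k^2$, since without the $1/H_{\ell-1}$ factor one would only obtain boundedness, not convergence to $0$, of the total variation distance.
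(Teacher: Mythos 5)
Your proposal is correct and takes essentially the same route as the paper: the paper derives the Bernoulli product law from the independence of the steps of the backward-time sampling algorithm (Proposition~\ref{corUniformRTCNBackward}), which is just a probabilistic restatement of your normalization $F(\mathbf{q})/C_\ell$, and it then applies the same Stein--Chen bound (Theorem~\ref{propRTCNPoissonApprox}, whose $\min(1,1/\lambda_n)$ factor plays exactly the role of your Barbour--Hall factor $(1-e^{-\lambda})/\lambda$) to get $d_{\mathrm{TV}}=O(1/\log\ell)$ and then the CLT. Your closing caveat matches the paper's choice precisely: without the $1/H_{\ell-1}$ factor, the crude bound $\sum_k 1/k^2$ is merely bounded, so the sharper form is indeed needed for the total variation distance to vanish.
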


\begin{proof}
The first part of the proposition follows from the fact that the steps of the
algorithm described in Proposition~\ref{corUniformRTCNBackward} are
independent and that, when going from $k + 1$ to $k$ lineages there are
$(k - 1) \binom{k + 1}{2}$ possible reticulations and $\binom{k + 1}{2}$
possible coalescences, so that choosing uniformly among those the probability
of picking a coalescence is $1/k$.
  
Points (i) and (ii) for  $B_\ell$
are classic properties of the distribution of the number
of cycles in a uniform permutation -- see for instance Section~3.1 of
\cite{Pitman2006} -- that follow easily from its representation as a sum of
independent Bernoulli variables. Indeed, (i) is immediate and for (ii) we can
use the Stein-Chen bound on the total variation distance between a sum of
independent Bernoulli variables and the corresponding Poisson distribution
(recalled as Theorem~\ref{propRTCNPoissonApprox} in
Section~\ref{secRTCNRandPaths}) to get
\[
  d_{\mathrm{TV}}\big(B_\ell,\, \mathrm{Poisson}(H_{\ell - 1})\big) 
  \;\leq\;
  \min\Set{1, 1/H_{\ell - 1}} \sum_{k = 1}^{\ell - 1} \frac{1}{k^2}
  \;=\; O\mleft((\log\ell)^{-1}\mright)\,,  
\]
from which the central limit theorem follows readily
(see e.g.\ \cite{Barbour1992}, page~17).
\end{proof}

Finally, let us close this section by pointing out an unexpected connection
between RTCNs and a combinatorial structure known as ``river-crossings''.

\begin{remark} \label{remRiverCrossings}
The number of RTCNs with $\ell$ labeled leaves is also
the number of river-crossings using a two-person boat. It is recorded as
sequence \href{https://oeis.org/A167484}{A167484} in the Online Encyclopedia
of Integer Sequences~\cite{OEIS}, where it is described as follows:
\begin{quote}
  {\it For $\ell$ people on one side of a river, the number of ways they can all travel
 to the opposite side following the pattern of 2 sent, 1 returns, 2 sent, 1
 returns, ..., 2 sent.}
\end{quote}
\revision{We could not find a natural bijection between river-crossings and
RTCNs, and thought that there would not be one: indeed, for $\ell = 3$ the
$C_3 = 6$ river-crossings are completely equivalent up to permutation of the
labels, while the 6 RTCNs are not (3 of them contain a reticulation while 3
of them do not). However, while this paper was under revision
we were informed by Michael Fuchs (pers.\ comm.) that his group found one such
bijection, which they will describe in a forthcoming paper.}
\end{remark}

\subsection{Forward-time construction of RTCNs} \label{secRTCNOriented}

In this section, we give a forwards-in-time construction of RTCNs
that will yield a second proof of Theorem~\ref{thmEnumRTCN}.
This proof is more combinatorial than the one we have already given, and
will provide a different intuition as to why Stirling numbers arise in the
enumeration of RTCNs.

\revision{Here we introduce a further definition: Two vertices are said to be
\emph{siblings} if they have a parent in common,  and \emph{step-siblings} if
they have a sibling in common (thus, in a tree-child network two vertices are
step-siblings if and only if they are the tree-vertex children of the two
parents of a reticulation).}

The following notion will be useful.

\begin{definition}
A \emph{decorated RTCN} is a pair $(\nu, \theta)$, where
\begin{mathlist}
  \item $\nu$ is a RTCN with vertex set $V$ and root $\rho$.
  \item The \emph{decoration} is a function
    $\theta : V \setminus \{\rho\} \to \Set{0, 1}$
    such that
    \begin{itemize}
      \item If $v$ is a reticulation vertex or the child of a reticulation
        vertex, $\theta(v) = 0$.
      \item If $u$ and $v$ are siblings or step-siblings 
        and neither $u$ nor $v$ is a reticulation vertex, then
        $\theta(u) = 1 - \theta(v)$. \qedhere
    \end{itemize}
\end{mathlist}
\end{definition}


Note that this formal definition is just a way to say that:
\begin{itemize}
  \item For every tree vertex $v$, we
    distinguish one of the outgoing edges by assigning a ``$1$'' to one
    of the children of $v$.
  \item For every reticulation vertex $v$, we distinguish one of the
    incoming edges by assigning a ``$1$'' to one of the siblings of $v$
    (this unambiguously determines their common parent $u$ and therefore
    the incoming edge $\vec{uv}$).
\end{itemize}

The notion of a decorated RTCN is similar in spirit to that of an
\emph{ordered ranked tree}, where an ordering is specified for the children of
each vertex -- and indeed both notions are equivalent for trees. However,
when working with RTCNs it is not useful to order the children of every
vertex, in part because reticulated vertices already play a special role.

\begin{figure}[h!]
  \centering
  \includegraphics[width=0.96\linewidth]{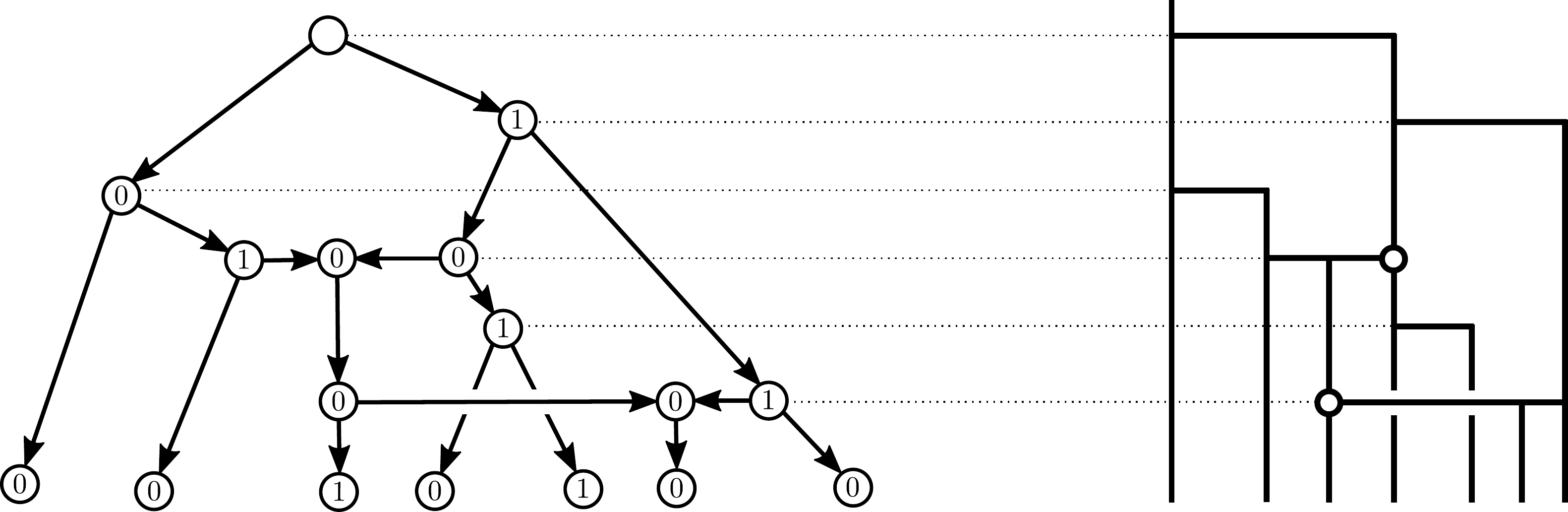}
  \captionsetup{width=0.96\linewidth}
  \caption{An example of a decorated RTCN (left) with its simplified representation
  (right). In the simplified representation, decorated lineages are indicated
  as follows: for branchings, they are drawn to the right; for reticulations,
  we add a white dot to indicate which parent of the reticulation points to a 1.}  
\label{figRTCN03}
\end{figure}

An important characteristic of decorated RTCNs is that, unlike their
undecorated counterparts, they are intrinsically labeled.
Indeed, it is possible to assign a unique label to every vertex of
a decorated RTCN $(\nu, \theta)$ as follows: first, remove every
reticulation edge distinguished by the decoration.
This yields a tree $\tilde{\nu}$. Then, assign to 
each vertex $v$ the label $\theta(u) \cdots \theta(v)$, where
$(\rho, u, \ldots, v)$ is the unique path going from the
root $\rho$ to $v$ in $\tilde{\nu}$. A consequence of this
unique labeling of internal vertices
is that there are $\ell!$ ways to label the leaves of a decorated
RTCN with $\ell$ leaves.

Before describing the forward-time construction of decorated RTCNs,
let us introduce one last combinatorial object.

\begin{definition} \label{defSubexcedant}
A \emph{subexcedant sequence of length $n$} is an integer-valued
sequence $s = (s_1, \ldots, s_n)$ such that, for all $k$, 
$1 \leq s_k \leq k$.
For any two subexcedant sequences $s$ and $s'$, the \emph{number of encounters
of $s$ and $s'$} is defined as
\[
  \mathrm{enc}(s, s') \;=\; \#\Set{k\geq1 \suchthat s_k = s'_k} \qedhere
\]
\end{definition}

\begin{lemma} \label{lemmaEncounters}
For any subexcedant sequence $s$ of length~$n$, there are
${n \brack k}$ subexcedant sequences $s'$ of length~$n$ such that
$\mathrm{enc}(s, s') = k$.
\end{lemma}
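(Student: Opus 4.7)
The plan is to count directly the subexcedant sequences $s'$ that match $s$ in a prescribed set of positions, and then sum over the set of matching positions. Fix $s$ and, for any subset $S \subseteq \{1, \ldots, n\}$, let me count the number of $s'$ with $\{k : s'_k = s_k\} \supseteq S$. At each position $k$ there is exactly one value, namely $s_k$, that agrees with $s$, and $k-1$ values that disagree. Hence the number of $s'$ that agree on $S$ and disagree off $S$ is $\prod_{k \notin S}(k-1)$. Since $s_1 = s'_1 = 1$ is forced, this product is zero whenever $1 \notin S$, so only subsets containing $1$ contribute.

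The next step is to sum these counts over all $S$ of size $k$ containing $1$. Writing $S = \{1\} \cup (S''+1)$ with $S'' \subseteq \{1, \ldots, n-1\}$ of size $k-1$, and letting $T = \{1, \ldots, n-1\} \setminus S''$, one gets
\[
  \#\Set{s' \suchthat \mathrm{enc}(s, s') = k}
  \;=\; \sum_{\substack{T \subseteq \{1,\ldots,n-1\} \\ |T| = n-k}} \prod_{i \in T} i .
\]
Finally, I would match this against the standard generating identity for the unsigned Stirling numbers of the first kind,
\[
  x(x+1)(x+2)\cdots(x+n-1) \;=\; \sum_{k=0}^{n} {n \brack k} x^k ,
\]
by expanding the left-hand side: from each factor $(x+i)$ with $i \in \{0, 1, \ldots, n-1\}$ we pick either $x$ or $i$, and the coefficient of $x^k$ is precisely the sum above (the $i=0$ terms vanish, which is why the summation can be restricted to $T \subseteq \{1, \ldots, n-1\}$).

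This is essentially all there is; the only subtlety is the bookkeeping that forces $1 \in S$, which is why the final formula involves subsets of $\{1, \ldots, n-1\}$ of size $n-k$ rather than $\{1, \ldots, n\}$ of size $n-k$. I do not expect a real obstacle here: the argument is a two-line inclusion-style count combined with one standard identity. Note that as a sanity check, summing ${n \brack k}$ over $k$ recovers $n!$, the total number of subexcedant sequences of length $n$, as expected.
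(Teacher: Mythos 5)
Your proof is correct, and it is a de-randomized version of the paper's argument rather than the same presentation. The shared core is identical: at each position $j$ exactly one of the $j$ admissible values of $s'_j$ agrees with $s_j$, and the positions are independent. The paper exploits this probabilistically in one sentence: for a uniform $s'$, the number of encounters $\mathrm{enc}(s,s')$ is a sum of independent Bernoulli variables with parameters $1/j$, $j = 1, \ldots, n$, which is the classical distribution of the number of cycles of a uniform permutation; multiplying by $\#\mathscr{S}_n = n!$ then yields ${n \brack k}$. Your count $\sum_{\Abs{S} = k} \prod_{j \notin S}(j-1)$ is exactly $n!$ times the probability that this Bernoulli sum equals $k$, so the underlying computation is the same; the difference is in the finish. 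You expand the count directly and match it against the rising-factorial identity $x(x+1)\cdots(x+n-1) = \sum_{k} {n \brack k} x^k$, which makes the argument purely enumerative and self-contained modulo that identity — notably, the same identity the paper itself treats as the definition of the Stirling numbers in the proof of Theorem~\ref{thmEnumRTCN}. The paper's route buys brevity and a direct link to cycle counts of uniform permutations, which it reuses in Corollary~\ref{propLawProfile}; the cost is an appeal to the (standard but not trivial) Bernoulli representation of cycle counts, which your proof avoids entirely. One small remark: your careful bookkeeping forcing $1 \in S$ is correct but optional — if you instead sum over all $T \subseteq \{0, 1, \ldots, n-1\}$ with $\Abs{T} = n - k$ and note that every term containing the factor $0$ vanishes, the reindexing step disappears and the match with the coefficient of $x^k$ in $\prod_{i=0}^{n-1}(x+i)$ is immediate. (In the probabilistic phrasing this is automatic, since the first Bernoulli variable has parameter $1$.)
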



This classic result follows immediately by considering a uniform subexcedant
sequence $s'$ and noting that its number of encounters with $s$ is distributed
as the sum for $k = 1$ to $n$ of independent Bernoulli variables with
parameters $1/k$.

Let us now describe how to encode decorated RTCNs using subexcedant sequences.
Let $s^\circ$ and $s^\bullet$ be two subexcedant sequences of length
$\ell - 1$. Start from a single lineage indexed~``1'' and, for $k = 1$ to
$\ell - 1$:
\begin{itemize}
  \item If $s^\circ_k = s^\bullet_k$, then let lineage~$s^\circ_k$ branch
    to create lineage $k + 1$.
  \item If $s^\circ_k \neq s^\bullet_k$, then let lineages
    $s^\circ_k$ and $s^\bullet_k$ hybridize to form lineage $k + 1$.
\end{itemize}
At each step of this procedure, decorate the lineage $s_k^\circ$
with a white dot. This construction is illustrated in Figure~\ref{figRTCN04}.

\begin{figure}[h!]
  \centering
  \includegraphics[width=0.9\linewidth]{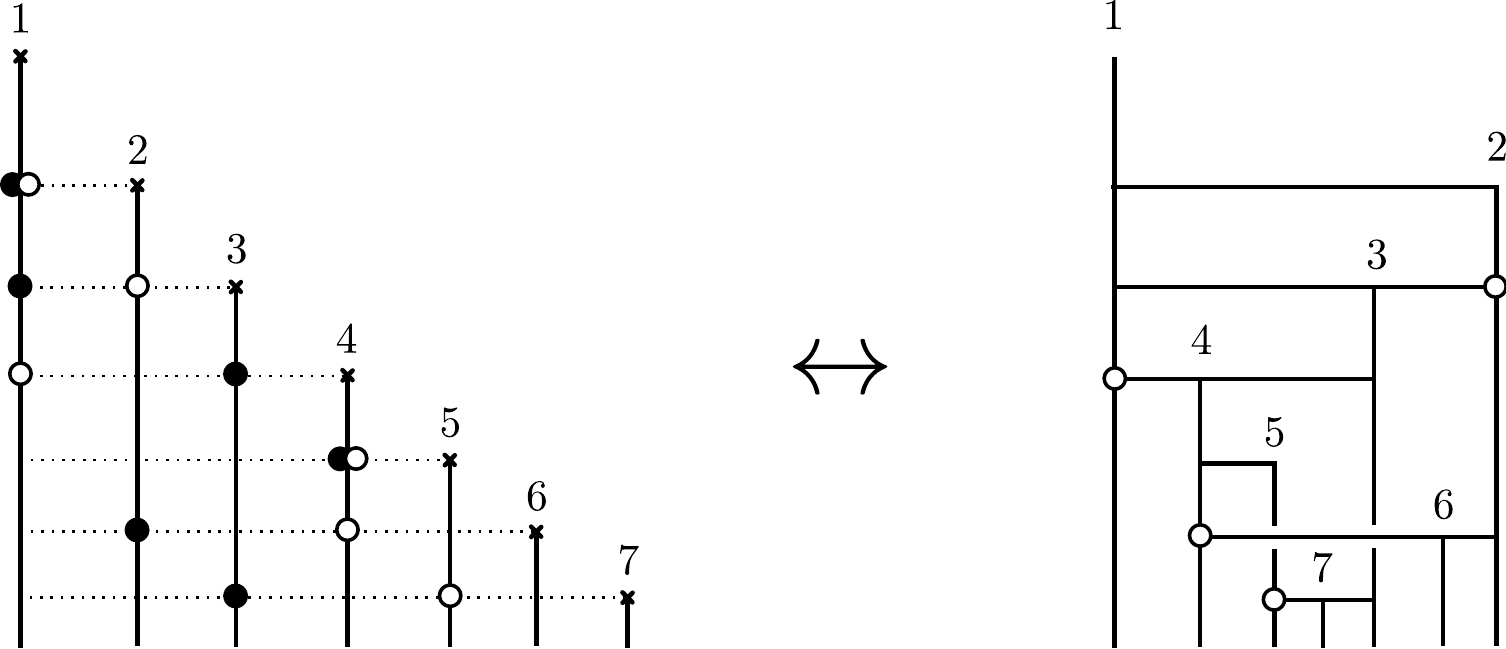}
  \captionsetup{width=0.9\linewidth}
  \caption{Graphical representation of the forward-time construction of
  RTCNs. On the left, the black and white dots represent the subexcedant
  sequences $s^\bullet$ and $s^\circ$, respectively, and on the right the
  corresponding RTCN. At each step, the new
  lineage (marked by a cross) is linked to the black dot and the white dot.
  If these two dots fall on the same lineage, we get a branching.}  
\label{figRTCN04}
\end{figure}

Observe that in this construction every pair of subexcedant sequences
$(s^\bullet, s^\circ)$ will yield a different decorated RTCN $\nu^\circ$, and
that given a decorated RTCN $\nu^\circ$ it is possible to unequivocally
recover the subexcedant sequences $(s^\bullet, s^\circ)$ that produced it.
Thus, letting $\mathscr{S}_{\ell - 1}$ denote the set of subexcedant
of length $\ell - 1$ and $\mathscr{C}_\ell^\circ$ that of decorated RTCNs with
$\ell$ unlabeled leaves, this constructions gives a
bijection between $\mathscr{S}_{\ell - 1} \times \mathscr{S}_{\ell - 1}$ and 
$\mathscr{C}_\ell^\circ$.

Also note that if at every step of the procedure
we only link lineage $k + 1$ to lineage $s_k^\circ$ and decorate the latter
with a white dot, we get a bijection between $\mathscr{S}_{\ell - 1}$
and the set $\mathscr{T}_{\ell}^\circ$ of decorated ranked trees with
$\ell$ unlabeled leaves.

From this forward-time encoding of decorated RTCNs
and Lemma~\ref{lemmaEncounters}, we immediately get the following proposition.

\begin{proposition} \label{propDecoratedRTCNs}
There are ${\ell - 1 \brack b}(\ell - 1)!$ decorated RTCNs with
$\ell$ unlabeled leaves and $b$ branchings.
\end{proposition}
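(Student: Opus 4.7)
The plan is to prove Proposition~\ref{propDecoratedRTCNs} by directly exploiting the bijection between decorated RTCNs with $\ell$ unlabeled leaves and pairs of subexcedant sequences in $\mathscr{S}_{\ell-1} \times \mathscr{S}_{\ell-1}$ that was just established. Under this bijection, the profile of the resulting RTCN is entirely determined by when the two sequences agree: indeed, the $k$-th step of the forward-time construction produces a branching if and only if $s^\circ_k = s^\bullet_k$. Hence the number of branchings of the decorated RTCN encoded by $(s^\bullet, s^\circ)$ is exactly $\mathrm{enc}(s^\bullet, s^\circ)$.

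The first thing I would do is translate the counting problem through this bijection: the number of decorated RTCNs with $\ell$ unlabeled leaves and $b$ branchings equals the number of pairs $(s^\bullet, s^\circ) \in \mathscr{S}_{\ell-1} \times \mathscr{S}_{\ell-1}$ with $\mathrm{enc}(s^\bullet, s^\circ) = b$. Next, I would partition this set of pairs according to the value of $s^\bullet$. Since a subexcedant sequence of length $\ell-1$ offers $k$ choices at position $k$, there are $\prod_{k=1}^{\ell-1} k = (\ell - 1)!$ possible values for $s^\bullet$.

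Once $s^\bullet$ is fixed, Lemma~\ref{lemmaEncounters} tells us that the number of subexcedant sequences $s^\circ$ of length $\ell-1$ such that $\mathrm{enc}(s^\bullet, s^\circ) = b$ is exactly ${\ell - 1 \brack b}$, and critically this count does not depend on the particular $s^\bullet$ chosen. Summing over all $(\ell-1)!$ choices of $s^\bullet$ then gives the desired total
\[
  \#\bigl\{(s^\bullet, s^\circ) : \mathrm{enc}(s^\bullet, s^\circ) = b\bigr\}
  \;=\; (\ell - 1)! \cdot {\ell - 1 \brack b}\,.
\]

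There is no real obstacle here: all the heavy lifting has already been done in setting up the bijection $\mathscr{S}_{\ell-1} \times \mathscr{S}_{\ell-1} \leftrightarrow \mathscr{C}_\ell^\circ$ and in Lemma~\ref{lemmaEncounters}. The only thing to be careful about is the verification that a branching at step $k$ in the forward-time construction corresponds exactly to the equality $s^\circ_k = s^\bullet_k$, but this is immediate from the description of the construction (the new lineage $k+1$ is attached to the white and black dots, which coincide precisely when these two indices agree, producing a branching rather than a hybridization).
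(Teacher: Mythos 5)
Your proposal is correct and is essentially the paper's own proof: the paper derives the proposition ``immediately'' from the forward-time bijection $\mathscr{S}_{\ell-1}\times\mathscr{S}_{\ell-1}\leftrightarrow\mathscr{C}_\ell^\circ$ together with Lemma~\ref{lemmaEncounters}, exactly as you do. You have merely made explicit the two steps the paper leaves implicit, namely that branchings at step $k$ correspond to $s^\circ_k=s^\bullet_k$ (so the number of branchings is $\mathrm{enc}(s^\bullet,s^\circ)$) and the summation over the $(\ell-1)!$ choices of $s^\bullet$.
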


To recover Theorem~\ref{thmEnumRTCN} from Proposition~\ref{propDecoratedRTCNs},
it suffices to recall that there are $\ell!$ ways to label the
leaves of a decorated RTCN and to note that there are $2^{\ell - 1}$ ways
to decorate a RTCN.

Finally, let us close this section by pointing out that the following
simple stochastic process generates uniform RTCNs.

\begin{proposition} \label{propForwardAlgoUnif}
Starting from a single lineage representing the root, 
let every lineage branch
at rate~1 and every ordered pair of lineages hybridize at rate 1,
decorating a lineage when it branches and decorating the first vertex
of an ordered pair of lineages when it hybridizes.
The RTCN obtained by stopping upon reaching $\ell$ lineages is uniform on
the set of decorated RTCN with $\ell$ leaves.
Relabeling its leaves uniformly at random and discarding its decoration
yields a uniform RTCN with $\ell$ labeled leaves.
\end{proposition}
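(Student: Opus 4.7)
The plan is to reduce the continuous-time process to the uniform-subexcedant-sequences picture developed at the end of Section~\ref{secRTCNOriented}. I will analyze the embedded jump chain, show that the sequence of events it produces is a uniform pair in $\mathscr{S}_{\ell-1} \times \mathscr{S}_{\ell-1}$, and then invoke the bijection between $\mathscr{S}_{\ell-1} \times \mathscr{S}_{\ell-1}$ and $\mathscr{C}_\ell^\circ$ established after Lemma~\ref{lemmaEncounters}. A standard labeling-then-decoration-stripping argument will then yield the uniform distribution on leaf-labeled RTCNs.

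For the embedded chain, first observe that when there are $k$ lineages the total event rate is $k + k(k-1) = k^2$: there are $k$ branching clocks at rate~$1$ and $k(k-1)$ ordered-pair hybridization clocks at rate~$1$. By the race property of independent exponentials, each specific event is selected with probability $1/k^2$. Identifying the branching of lineage $i$ with the pair $(i,i)$ and the hybridization of the ordered pair $(i,j)$ with $(i,j)$, the event at step $k$ is uniformly distributed on $\{1,\ldots,k\}^2$, so the sequence over $\ell-1$ steps gives a uniform pair $(s^\circ, s^\bullet)$ of subexcedant sequences. Feeding this pair into the forward-time construction produces, by the bijection with $\mathscr{C}_\ell^\circ$, a uniformly distributed decorated RTCN with $\ell$ unlabeled leaves.

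To pass from unlabeled decorated RTCNs to leaf-labeled RTCNs: the canonical labeling $\theta(u)\cdots\theta(v)$ distinguishes every internal vertex of a decorated RTCN, so such an object has no non-trivial automorphisms and admits exactly $\ell!$ distinct leaf labelings. Therefore the uniform distribution on $\mathscr{C}_\ell^\circ$ combined with a uniformly chosen leaf labeling yields the uniform distribution on leaf-labeled decorated RTCNs. Every leaf-labeled RTCN admits exactly $2^{\ell-1}$ decorations (each of the $\ell-1$ events contributes a binary choice, consistent with the identity $\ell!\,(\ell-1)!^2 = 2^{\ell-1} C_\ell$ from Corollary~\ref{corCountRTCN}), so discarding the decoration projects this to the uniform distribution on leaf-labeled RTCNs.

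The main point to verify carefully is the decoration convention for branching events: placing the decoration on the lineage that branches (equivalently, on the daughter that is the ``continuation'' of that lineage, which corresponds to $s^\circ_k$ in the subexcedant encoding) rather than on the freshly created daughter. Once this identification is pinned down, the continuous-time decoration rule matches the subexcedant encoding exactly, and the rest of the argument amounts to bookkeeping. I expect no further obstacles.
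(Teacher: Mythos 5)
Your proposal is correct and follows exactly the route the paper intends: the proposition is stated right after the bijection between $\mathscr{S}_{\ell-1}\times\mathscr{S}_{\ell-1}$ and $\mathscr{C}_\ell^\circ$, and the continuous-time process is precisely a device for generating a uniform pair of subexcedant sequences (total rate $k^2$, each event chosen with probability $1/k^2$ by the exponential race, independently across steps), after which the $\ell!$-labelings and $2^{\ell-1}$-decorations counting finishes the argument as you describe. You have merely made explicit the embedded-chain computation and the decoration convention (white dot on the continuing lineage $s^\circ_k$) that the paper leaves implicit, so there is nothing to correct.
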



\section{RTCNs and ranked trees} \label{secRTCNLinkTrees}

The forward-time construction of the previous section gave us
a way to encode a decorated RTCN using a decorated ranked tree and
a subexcedant sequence. In fact, Theorem~\ref{thmEnumRTCN} shows that it is
also possible to encode an \emph{undecorated} leaf-labeled RTCN
using a leaf-labeled ranked tree and, e.g., a subexcedant sequence or
a permutation -- even though we were unable to find any meaningful such
encoding.

In this section, we specialize the results of the previous section
in order to explain how to obtain RTCNs from ranked trees using simple graph
operations, without having to explicitly manipulate subexcedant sequences.  In
particular, we give a simple way to sample a uniform RTCN conditional on
displaying a given ranked tree. Readers who are more interested in the
structural properties or RTCNs than in how to sample them can jump to the
next section.

\subsection{Reticulation, unreticulation and base trees} \label{secRTCNModifs}

\enlargethispage{1ex}
Let us recall that the classic graph operation known as the
\emph{vertex identification} of $u$ and $v$ consists in replacing $u$ and $v$
by a new vertex $w$ whose neighbors (in-neighbors and out-neighbors, respectively)
are exactly the neighbors of $u$ and those of $v$ (without introducing a
self-loop in the case where $u$ and $v$ were neighbors).  Conversely, the
\emph{subdividing} of $\vec{uv}$ consists in introducing an intermediate vertex 
$w$ between $u$ and~$v$, that is: adding $w$ to the graph, removing $\vec{uv}$
and adding $\vec{uw}$ and~$\vec{wv}$.
\begin{definition}
The \emph{unreticulation of a reticulation edge $\vec{uv}$} is the graph
operation consisting in
\begin{enumerate}
  \item Removing $\vec{uv}$.
  \item Identifying $u$ and its (now only) child.
  \item Identifying $v$ and its child.
\end{enumerate}
Conversely, the \emph{reticulation of a tree edge $e_1$ into a tree edge $e_2$}
consists in
\begin{enumerate}
  \item Subdividing $e_1$ and $e_2$.
  \item Adding an edge from the vertex introduced in the middle of
    $e_1$ to that introduced in the middle of $e_2$.
\end{enumerate}
These operations are illustrated in Figure~\ref{figRTCN05}.
\end{definition}

\begin{figure}[h!]
  \centering
  \captionsetup{width=0.85\linewidth}
  \includegraphics[width=0.85\linewidth]{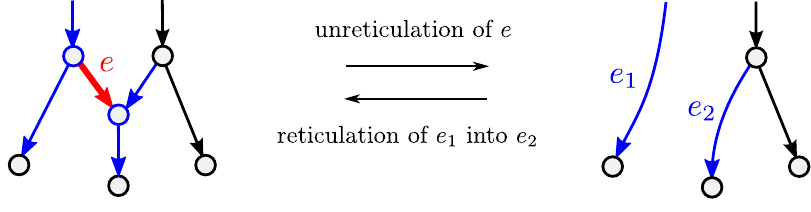}
  \caption{The operations of unreticulation and reticulation.}  
\label{figRTCN05}
\end{figure}

Unreticulating edges of a tree-child network $N$ yields a tree
$N'$ with the same number of leaves as $N$ as well as
a partition into events and a genealogical order that are compatible with
those of~$N$. This justifies the following definition.

\begin{definition} \label{defDisplay}
A ranked tree $\tau$ is a called a
\emph{base tree} of a RTCN $\nu$
if is possible to obtain it by unreticulating edges of $\nu$.
In that case, we write $\tau \sqsubset \nu$
and say that $\nu$ \emph{displays} $\tau$.
\end{definition}

Note that since each reticulation vertex has two incoming reticulating
edges and that unreticulating each of these yields different RTCNs,
every leaf-labeled RTCN with $r$ reticulations displays exactly $2^r$ ranked
trees. This is also a consequence of the fact that an RTCN is a binary normal network, and each such network displays exactly $2^r$ trees
(Corollary 3.4 of \cite{wil12}). 

\pagebreak

\subsection{Uniform base trees of a uniform RTCN}
\label{secRTCNToTrees}

\begin{proposition} \label{propUniformTreeFromRTCN}
The ranked tree obtained by unreticulating one the of two incoming edges of each
reticulation vertex of a uniform RTCN with $\ell$ labeled leaves,
each with probability~$1/2$, 
has the uniform distribution on the set of ranked trees with $\ell$ labeled
leaves.
\end{proposition}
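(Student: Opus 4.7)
The plan is to use the backward-time construction of a uniform RTCN given in Proposition~\ref{corUniformRTCNBackward} and track how the random unreticulation acts step by step. Since that construction is Markovian (at each step an option is picked uniformly, independently of the past) and the unreticulation choices are also independent, it will suffice to show that, at each backward step going from $k+1$ to $k$ active lineages, the pair of lineages that coalesces \emph{in the resulting tree} is uniform over all $\binom{k+1}{2}$ pairs: this is precisely the backward-time construction of a uniform ranked tree.

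The first step will be a local analysis of a single backward event. A coalescence event of two lineages in the RTCN remains a coalescence of the same pair in the tree. A reticulation event involves three lineages, namely a hybrid $c$ and two step-siblings $\{a, b\}$; removing the reticulation edge coming from one of the two parents (each with probability $1/2$) leaves, in the tree, either the coalescence of $\{a, c\}$ or that of $\{b, c\}$, with the third lineage becoming a passing-through tree-lineage. In both cases the number of active lineages drops by one and the induced tree event is a coalescence of the hybrid with one of its two step-siblings.

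Armed with this, fix a specific pair $\{A, B\}$ among the $k+1$ active lineages. The probability that the tree event at this step is the coalescence of $\{A, B\}$ decomposes as
\[
  \frac{1}{k\binom{k+1}{2}}
  \,+\, 2(k-1) \cdot \frac{1}{k\binom{k+1}{2}} \cdot \frac{1}{2}
  \;=\; \frac{1}{\binom{k+1}{2}}.
\]
The first term counts the unique RTCN coalescence of $\{A, B\}$ itself; the second accounts for the $2(k-1)$ RTCN reticulations which produce $\{A, B\}$ in the tree, obtained by choosing the hybrid among $\{A, B\}$ (two options) and the third step-sibling among the remaining $k-1$ active lineages, each giving the desired tree event with probability~$1/2$ after unreticulation. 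So conditionally on the past the tree coalescence is uniform over the $\binom{k+1}{2}$ available pairs, exactly as required.

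The main obstacle is a bookkeeping one: one needs to make precise that the ``active lineages at step $k$ in the RTCN'' and the ``active lineages at step $k$ in the tree obtained after unreticulation'' can be identified consistently, so that the uniform choice computed above is indeed the next step of the backward-time construction of a uniform ranked tree. This boils down to checking that the canonical labels match up under unreticulation: for instance, the two new RTCN labels $\{\{c\}, a\}$ and $\{\{c\}, b\}$ produced at a reticulation become, after unreticulating one of the two edges, the two tree labels $\{a, c\}$ and $b$, or $a$ and $\{b, c\}$, which are precisely the labels one would obtain in the tree's backward-time construction after coalescing $\{a, c\}$ or $\{b, c\}$. Once this identification is made, the independence and uniformity across steps yield the result.
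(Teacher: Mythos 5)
Your proof is correct, but it follows a genuinely different route from the paper's. The paper proves Proposition~\ref{propUniformTreeFromRTCN} via the forward-time encoding of Section~\ref{secRTCNOriented}: choosing one incoming edge of each reticulation with probability $1/2$ amounts to decorating the uniform RTCN uniformly at random, so the decorated network corresponds to a uniform triple $(s^\bullet, s^\circ, \sigma)$; unreticulating the undecorated reticulation edges yields the decorated tree encoded by the marginal $(s^\circ, \sigma)$, which is uniform, and discarding the decoration preserves uniformity. That is essentially a two-line marginalization once the bijection between pairs of subexcedant sequences and decorated RTCNs is in place. You instead run the backward-time construction of Proposition~\ref{corUniformRTCNBackward} and verify directly that the induced tree process is the uniform-pair coalescent: your local analysis of unreticulation (the hybrid coalesces with one of its two step-siblings, each with probability $1/2$, while the third lineage passes through) is accurate, and your count
\[
  \frac{1}{k\binom{k+1}{2}} \;+\; 2(k-1)\cdot\frac{1}{k\binom{k+1}{2}}\cdot\frac{1}{2}
  \;=\; \frac{1}{\binom{k+1}{2}}
\]
is right, with conditional uniformity at every level holding because the RTCN's backward choices and the coin flips are independent of the past (note the sanity check: coalescences contribute total mass $1/k$ and reticulations $(k-1)/k$, matching Corollary~\ref{propLawProfile}). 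The paper's approach buys brevity and absorbs all lineage bookkeeping into the encoding; yours buys a self-contained, mechanistic explanation of \emph{why} the statement holds, showing how randomly resolving each reticulation collapses the coalescent-with-reticulations onto the uniform-pair coalescent, without invoking decorations or subexcedant sequences. The one ingredient you should make explicit is the standard fact that choosing a uniform pair at each backward step generates a uniform ranked tree; this follows from the encoding of ranked trees by the process $(P_k)$ together with $T_\ell = \prod_{k=1}^{\ell-1}\binom{k+1}{2}$, and your final bookkeeping paragraph on matching canonical labels correctly supplies the identification of active lineages needed to close the argument.
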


\begin{proof} 
Let $\nu$ be a uniform leaf-labeled RTCN. The procedure
described in the proposition amounts to:
\begin{enumerate}
  \item Decorating $\nu$ uniformly at random to obtain
    a decorated leaf-labeled RTCN~$\nu^\circ$.
  \item Unreticulating each of the undecorated reticulation edges
    of $\nu^\circ$ to produce a decorated leaf-labeled tree $\tau^\circ$.
  \item Discarding the decoration of $\tau^\circ$.
\end{enumerate}
In the forward-time encoding, $\nu^\circ$ corresponds to a unique triplet
$\nu^\circ \simeq (s^\bullet, s^\circ, \sigma)$, where the permutation
$\sigma$ represents the leaf labeling. Now, since
$\nu^\circ$ is uniform on the set of decorated leaf-labeled RTCNs,
$(s^\circ, \sigma)$ is also uniform and as a result so is
$\tau^\circ \simeq (s^\circ, \sigma)$. Finally, the tree obtained by forgetting
the decoration of a uniform decorated leaf-labeled ranked tree
is uniform on the set of leaf-labeled ranked trees, concluding the proof.
\end{proof}

\subsection{Sampling RTCNs conditional on displaying a tree}
\label{secRTCNToTrees}

To obtain a RTCN from a ranked tree, we need to pay attention to the
constraints imposed by the chronological order. Indeed, it is not possible to
reticulate any tree edge of a RTCN into any other tree edge and obtain a RTCN.
To formulate this restriction, we need to introduce the notion of
contemporary edges.

\begin{definition} \label{defAliveEdges}
We now introduce a relation on the set $V$ of vertices of an RTCN.
Let us write $\bar{u} \preccurlyeq \bar{v}$ to indicate that
$\bar{u} \prec \bar{v}$ or $\bar{u} = \bar{v}$. In addition, 
if $u$ is a leaf then set $\bar{u} = \partial V$ and
$\bar{v} \prec \bar{u}$ for any internal vertex $v$.
  
The edge $\vec{uv}$ is said to be \emph{alive between two events $U \prec U'$}
if it meets the two following conditions:
\begin{mathlist}
  \item It is a tree edge.
  \item $\bar{u} \preccurlyeq U$ and $U' \preccurlyeq \bar{v}$.
\end{mathlist}
  
Two edges are said to be \emph{contemporary} if there exist two events
such that they are both alive between these events.
\end{definition}

As for events, these definitions become very intuitive when using the
graphical representation of RTCNs. Recall that, in this representation, we think
of the vertical axis as time, and that tree edges correspond to vertical lines
while events correspond to horizontal ones.  An edge is alive between two
events if a portion of it is located in the horizontal strip of the
plane delimited by the two events.  Two edges are contemporary if they
overlap when projected on the vertical axis. This is illustrated in
Figure~\ref{figRTNC06}.

\begin{figure}[h!]
  \centering
  \captionsetup{width=0.7\linewidth}
  \includegraphics[width=0.45\linewidth]{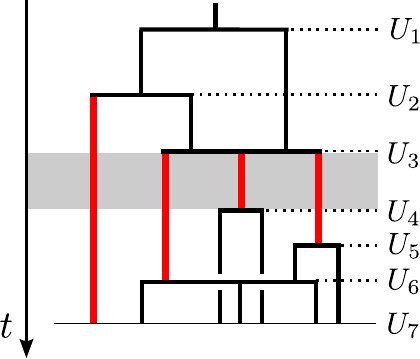}
  \caption{
  A RTCN with its events numbered in increasing order (and with
  the convention that $U_7 = \partial V$). The red edges are the four
  edges that are alive between events $U_3$ and $U_4$.}  
\label{figRTNC06}
\end{figure}

Note that if we let $U_1 \prec \ldots \prec U_{\ell - 1}$ denote the events of
a RTCN with $\ell$ leaves, with the convention that $U_{\ell} = \partial V$,
then there are exactly $k + 1$ edges alive between $U_k$ and $U_{k + 1}$.

Recall the Definition~\ref{defRTCNProfile} of a profile.

\begin{proposition} \label{propRTCNFromTree}
Let $\tau$ be a ranked tree with $\ell$ labeled leaves and let
$\mathbf{q}$ be a profile.
Letting $U_1 \prec \ldots \prec U_{\ell - 1}$ denote the branchings of $\tau$, 
for $k = 1$ to $\ell - 1$:
\begin{itemize}
  \item If $q_k = 1$, do nothing.
  \item If $q_k = 0$, letting $u$ denote the vertex such
    that $U_k = \{u\}$:
    \begin{enumerate}
      \item Pick an edge $e$ uniformly at random among the two
        outgoing edges of~$u$.
      \item Pick an edge $e'$ uniformly at random among the $k - 1$
        edges that are contemporary with $e$ and do not contain $u$.
      \item Reticulate $e'$ into $e$.
    \end{enumerate}
\end{itemize}
Then, resulting RTCN $\nu$ has the uniform distribution on the set of RTCNs with
profile $\mathbf{q}$ displaying $\tau$.
Moreover, if $\tau$ is uniform then $\nu$ is uniform on the set of 
RTCNs with profile $\mathbf{q}$. If in addition $\mathbf{q}$ is distributed
as the profile of a uniform RTCN (see Corollary~\ref{propLawProfile}), 
then the resulting RTCN is uniform on the set of RTCNs with $\ell$ labeled
leaves.
\end{proposition}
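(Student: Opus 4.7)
My overall plan is to prove the three uniformity claims in sequence, each building on the previous. First I would verify that the algorithm is well-defined: since every step of the construction (branching or reticulation) increases the number of lineages by one, the count of tree-edges alive between consecutive events $U_k$ and $U_{k+1}$ remains $k+1$ throughout the algorithm's execution. After excluding the two outgoing edges of $u$, exactly $k-1$ edges are therefore contemporary with $e$ and avoid $u$, so the random choice of $e'$ is well-posed.

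The crux of the argument is the first claim: for fixed $\tau$ and $\mathbf{q}$, the algorithm produces every RTCN $\nu$ with profile $\mathbf{q}$ displaying $\tau$ with probability $\prod_{k : q_k = 0} \frac{1}{2(k-1)}$. I would prove this by exhibiting a bijection between choice sequences $(e_k, e_k')_{k : q_k = 0}$ and such RTCNs. Surjectivity is immediate from the construction. The main obstacle is injectivity: since tree-edges are being subdivided along the way, some care is required to recover the choices from the final $\nu$. The key observation is that a base tree $\tau$ of $\nu$ singles out, at each reticulation vertex of $\nu$, one of the two incoming edges, namely the one whose removal participates in the unreticulation sequence producing $\tau$. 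The other parent of the hybrid at event $U_k$ is therefore identified as the branching vertex $u$ from $\tau$, which in turn pins down $e_k$ (the tree-edge from $u$ to the hybrid in $\nu$) and $e_k'$ (the unique edge of $\tau$ whose subdivision produced the other parent). This yields the bijection, whence the number of RTCNs with profile $\mathbf{q}$ displaying $\tau$ equals $\prod_{k : q_k = 0} 2(k-1)$, a quantity independent of $\tau$.

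The remaining claims then follow by direct computation. For $\tau$ uniform on ranked trees with $\ell$ labeled leaves, using that each RTCN $\nu_0$ with $r = \ell - 1 - \sum_k q_k$ reticulations displays exactly $2^r$ base trees (Section~\ref{secRTCNModifs}), we get
\[
  \mathbb{P}(\nu = \nu_0)
  \;=\; \sum_{\tau \sqsubset \nu_0} \frac{1}{T_\ell} \cdot \frac{1}{\prod_{k : q_k = 0} 2(k-1)}
  \;=\; \frac{2^r}{T_\ell \prod_{k : q_k = 0} 2(k-1)}
  \;=\; \frac{1}{F(\mathbf{q})}\,,
\]
which is uniform on RTCNs with profile $\mathbf{q}$. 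Finally, when $\mathbf{q}$ is drawn from the distribution of Corollary~\ref{propLawProfile}, each factor in the product $\mathbb{P}(\mathbf{q} = \mathbf{q}_0)/F(\mathbf{q}_0)$ simplifies to $1/k$ regardless of whether $q_k$ equals $0$ or $1$, yielding $\mathbb{P}(\nu = \nu_0) = 1/((\ell-1)!\,T_\ell) = 1/C_\ell$ by Corollary~\ref{corCountRTCN}, as desired.
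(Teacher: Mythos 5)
Your proposal is correct and follows essentially the same route as the paper: the unique recoverability of the choice sequence from $(\nu,\tau)$ (the paper's fact that each RTCN displaying $\theta$ arises from exactly one sequence of modifications, with common probability $\alpha(\mathbf{q})=\prod_{k:q_k=0}\tfrac{1}{2(k-1)}$), followed by summing over the $2^r$ base trees of a fixed $\nu$ to get uniformity over profile-$\mathbf{q}$ RTCNs. Your only additions are cosmetic strengthenings of the paper's argument --- spelling out injectivity via the unreticulation sequence, identifying the resulting constant as $1/F(\mathbf{q})$, and carrying out explicitly the final mixture computation over random profiles that the paper leaves implicit.
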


\begin{figure}[h!]
  \centering
  \captionsetup{width=0.85\linewidth}
  \includegraphics[width=0.85\linewidth]{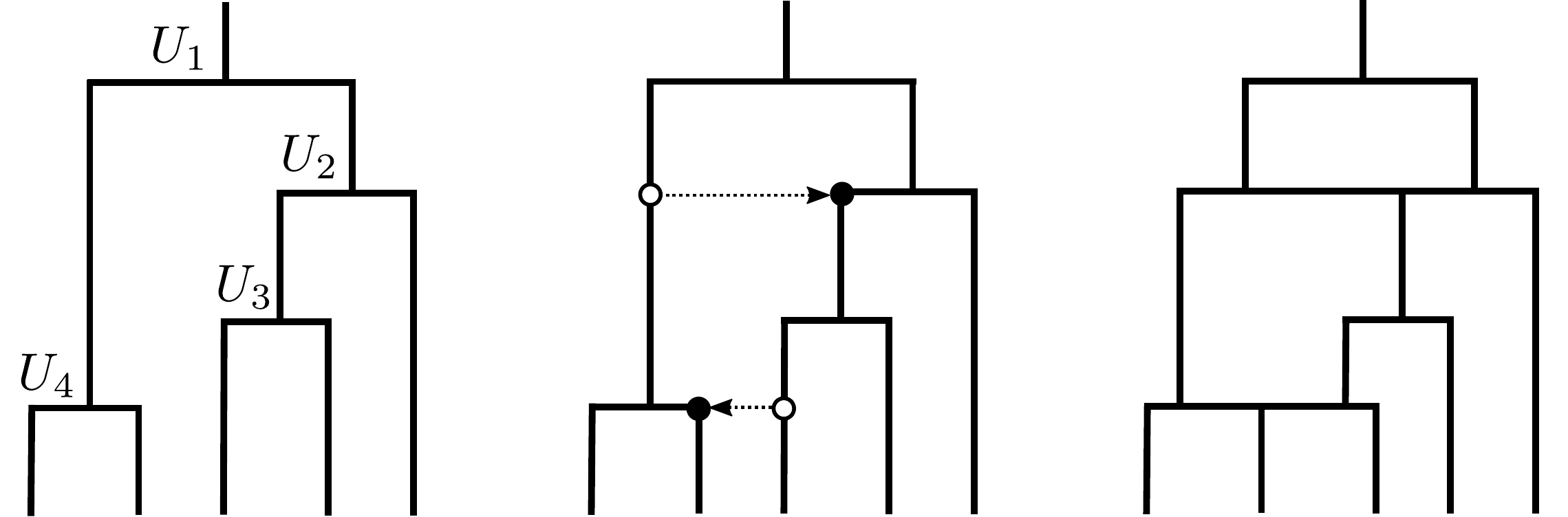}
  \caption{
  Example of the random construction of a RTCN from a ranked tree
  described in Proposition~\ref{propRTCNFromTree}. Here, nothing happens for
  $U_1$ and $U_3$. For $U_2$ and $U_4$, the black dot represents the edge
  $e$ and the white one the edge $e'$. Note that the modifications
  of branching events can be performed in any order, but that they have to be
  performed sequentially, so that contemporary edges remain well-defined at
  every step of the procedure.}  
\label{figRTCN07}
\end{figure}

\begin{proof}
The first part of the proposition follows from the fact (1) that for every
fixed ranked tree $\theta$ and for every RTCN $\mu$ displaying $\theta$
there is exactly one sequence of modifications of $\theta$ that yields $\mu$,
and (2) that each possible realization of the procedure described in the
proposition has the same probability, namely
\[
 \alpha(\mathbf{q}) = \prod_{k = 1}^{\ell - 1}
  \big(q_k + 2 (k - 1) (1 - q_k)\big)^{-1} \,, 
\]
although the exact value of this probability does not matter here.
Thus, letting $Q(\mu)$ denote the profile of $\mu$,
\[
  \Prob{\nu = \mu \given \tau = \theta} \;=\;
  \alpha(\mathbf{q})\,\Indic{Q(\mu) = \mathbf{q},\, \theta\, \sqsubset \mu}\,.
\]

Now let us assume that $\tau$ is uniform. We have to show that for any
RTCN $\mu$,
\[
  \Prob{\nu = \mu} \;=\; \tfrac{1}{F(\mathbf{q})}\Indic{Q(\mu) = \mathbf{q}}\,, 
\]
where $F(\mathbf{q})$ is the number of RTCNs with profile~$\mathbf{q}$, whose
exact value does not matter here.
Since $\Prob{\tau = \theta} = 1/T_\ell$ for all fixed
ranked tree $\theta$,
\[
  \Prob{\nu = \mu} \;=\;
  \sum_{\theta \sqsubset \mu}\;
  \alpha(\mathbf{q})
  \cdot
  \frac{1}{T_\ell}
  \cdot
  \Indic{Q(\mu) = \mathbf{q}} \, .
\]
Now let $r = \sum_{k = 1}^{\ell - 1} (1 - q_k)$ denote the number of
reticulations of the profile $\mathbf{q}$.
Since every RTCN with profile $\mathbf{q}$ displays exactly $2^r$ ranked trees,
we have
\[
  \Prob{\nu = \mu} \;=\;
  \frac{2^r \alpha(\mathbf{q})}{T_\ell}
  \cdot
  \Indic{Q(\mu) = \mathbf{q}} \,, 
\]
which concludes the proof since the factor
$2^r\alpha(\mathbf{q}) / T_\ell$ does not
depend on $\mu$.
\end{proof}

To close this section, note that to generate all RTCNs displaying a ranked tree
it suffices to apply the procedure above without the restrictions of the
profile, and that there are then $2(k - 1) + 1$ possible actions at step
$k$ of the procedure. This gives us the following proposition counting
the number of RTCNs displaying a ranked tree.

\begin{proposition}
\label{pro:double}
The number of RTCNs displaying any ranked tree $\tau$ is
\[
   \#\Set{\nu \suchthat \tau \sqsubset \nu} 
   \;=\;
   \prod_{k = 1}^{\ell - 1} (2k - 1)
   \;=\; (2\ell - 3)!!
\]
\end{proposition}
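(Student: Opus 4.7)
The plan is to adapt the algorithmic construction from Proposition~\ref{propRTCNFromTree} and count the choices at each step, dropping the profile constraint. More precisely, I would start from the given ranked tree $\tau$ and, going through its branchings $U_1 \prec \cdots \prec U_{\ell - 1}$ in order, decide at each step $U_k$ whether to keep it as a branching or to transform it into a reticulation; in the latter case, one must choose an outgoing edge $e$ of the vertex $u$ with $U_k = \{u\}$ and then a contemporary edge $e'$ not containing $u$ to reticulate into $e$.

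Next I would count the options at step $k$. There is exactly $1$ way to keep the branching unchanged, and there are $2(k-1)$ ways to perform a reticulation, since there are $2$ choices for $e$ (the two outgoing edges of $u$) and, as observed after Definition~\ref{defAliveEdges}, there are exactly $k+1$ edges alive just after $U_k$, of which $2$ contain $u$, leaving $k - 1$ contemporary candidates for $e'$. This gives a total of $1 + 2(k - 1) = 2k - 1$ options at step $k$. Multiplying over $k$ yields $\prod_{k=1}^{\ell-1}(2k-1) = (2\ell-3)!!$.

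Finally, to turn the count of option-sequences into a count of RTCNs, I would invoke the bijective correspondence already established in the proof of Proposition~\ref{propRTCNFromTree}: for every ranked tree $\theta$ and every RTCN $\mu$ displaying $\theta$ there is a unique sequence of modifications of $\theta$ producing $\mu$. In particular, distinct option-sequences yield distinct RTCNs, and every RTCN displaying $\tau$ is obtained from exactly one such sequence, which gives the claimed bijection between sequences and $\{\nu : \tau \sqsubset \nu\}$.

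I do not expect any significant obstacle here: the hard work (the bijection between sequences of modifications and RTCNs displaying $\tau$, and the characterization of admissible reticulations via contemporary edges) has already been carried out to prove Proposition~\ref{propRTCNFromTree}. The only genuinely new observation is the combinatorial identity $1 + 2(k - 1) = 2k - 1$ and its telescoping product, so the proof should be very short.
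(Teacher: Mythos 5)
Your proposal is correct and follows essentially the same route as the paper: the authors obtain Proposition~\ref{pro:double} precisely by running the procedure of Proposition~\ref{propRTCNFromTree} without the profile restriction, observing that there are $2(k-1)+1 = 2k-1$ possible actions at step $k$, with the bijection between modification sequences and RTCNs displaying $\tau$ supplied by fact~(1) in that proof, exactly as you invoke it. Your accounting of the $k-1$ admissible choices of $e'$ (via the $k+1$ edges alive between $U_k$ and $U_{k+1}$, two of which contain $u$) likewise matches the paper's observation following Definition~\ref{defAliveEdges}.
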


\begin{remark}
Surprisingly, the right-hand side of the expression in
Proposition~\ref{pro:double}  is also the number of rooted unranked binary
trees with $\ell$ labeled leaves --
see e.g.\ \cite{LambertBrazJProbabStat2017} or \cite{Steel2016}. While it
is easy to give a bijective proof of this, we have not found a ``graphical''
bijection that would make this intuitive.
\end{remark}


\section{Cherries and tridents} \label{secRTCNStats}

Cherries and tridents are among the most basic statistics
of tree-child networks. In biological terms,  a cherry is a pair of non-hybrid
sibling species and a trident is a group of three extant species
such that one of these species was produced by the hybridization of the two
others.  These notions can be formalized as follows.

\begin{definition}
An event is said to be \emph{external} when
\[
  \bigcup_{u \in U} \Gamma_{\!\mathrm{out}}(u) \subset \partial V\,,  
\]
where $\Gamma_{\!\mathrm{out}}(u)$ denotes the set of children of vertex~$u$
and $\partial V$ is the leaf set of~$N$.
\begin{itemize}
  \item A \emph{cherry} is an external branching event.
  \item A \emph{trident} is an external reticulation event.
\end{itemize}
These notions are illustrated in Figure~\ref{figExCherriesRetCherries}.
\end{definition}

\begin{figure}[h!]
  \centering
  \captionsetup{width=0.47\linewidth}
  \includegraphics[width=0.4\linewidth]{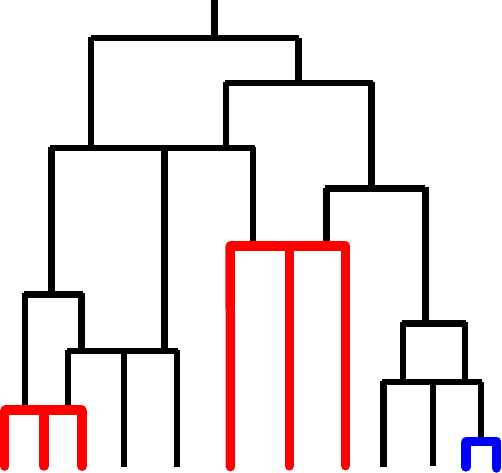}
  \caption{A RTCN with its only cherry (in blue) and its two tridents (in red).}  
\label{figExCherriesRetCherries}
\end{figure}
\begin{center}
\end{center}

\subsection{Number of cherries} \label{secRTCNCherries}

In this section, we prove the first part of
what was announced in the introduction as
Theorem~\ref{thmRTCNCherriesRetCherries}.

\begin{theorem}[Part (i)]
\label{thmRTCNCherriesRetCherries}
Let $\kappa_\ell$ be the number of cherries of a uniform RTCN with
$\ell$ labeled leaves. Then,
\[
  \kappa_\ell\; \tendsto[d]{\ell \to \infty}\;
  \mathrm{Poisson}\mleft(\tfrac{1}{4}\mright) \,.
\]
\end{theorem}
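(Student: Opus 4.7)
The plan is to use the forward-time construction of Proposition~\ref{propForwardAlgoUnif}: a uniform leaf-labelled RTCN on $\ell$ leaves is built from independent uniform random variables $(s_k^\circ, s_k^\bullet) \in \{1,\dots,k\}^2$ for $k = 1,\dots,\ell-1$, with event $U_j$ being a branching iff $s_j^\circ = s_j^\bullet$, in which case its two children are labelled $s_j^\circ$ and $j+1$. The crucial feature is that the $\ell-1$ events are driven by independent inputs, so the dependence among cherry indicators will be completely explicit.

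In this encoding, $I_j := \mathbf{1}\{U_j \text{ is a cherry}\}$ equals $1$ iff $U_j$ is a branching \emph{and} neither of its two children participates in any later event, i.e.\ $s_k^\circ, s_k^\bullet \notin \{s_j^\circ, j+1\}$ for every $k > j$. By independence across $k$,
\[
\Prob{I_j = 1} \;=\; \frac{1}{j} \prod_{k = j + 1}^{\ell - 1}\Big(\frac{k - 2}{k}\Big)^2 \;=\; \frac{j(j - 1)^2}{(\ell - 1)^2 (\ell - 2)^2},
\]
the simplification being a direct telescoping. Summing over $j$ gives the exact identity $\Expec{\kappa_\ell} = \tfrac{1}{4} + \tfrac{2\ell - 3}{6(\ell - 1)(\ell - 2)}$, which tends to $1/4$.

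To upgrade mean convergence to full Poisson convergence I would apply the classical method of factorial moments, showing $\Expec{\kappa_\ell^{(r)}} \to (1/4)^r$ for every $r \geq 1$. The same independence/telescoping recipe yields an exact formula for the probability that $U_{j_1},\dots,U_{j_r}$ are all cherries, for any $j_1 < \cdots < j_r$: each $U_{j_i}$ must be a branching whose value avoids the $2(i-1)$ children already protected from earlier cherries (contributing a factor $(j_i - 2(i-1))/j_i^2$), and in each interval $(j_i, j_{i+1})$ the $2i$ protected lineages must be avoided at every intervening step (contributing $\prod_k((k - 2i)/k)^2$). Telescoping reduces the whole expression to a rational function with leading asymptotic $\prod_i j_i^3 / \ell^{4r}$, and a Riemann-sum argument then gives
\[
\Expec{\kappa_\ell^{(r)}} \;=\; r!\!\sum_{j_1 < \cdots < j_r}\!\Prob{\text{all cherries}}\;\longrightarrow\; r!\!\int_{0 < x_1 < \cdots < x_r < 1}\!\!\prod_i x_i^3\, d\mathbf{x} \;=\; (1/4)^r,
\]
the $r$-th factorial moment of Poisson$(1/4)$.

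The main obstacle is the factorial-moment bookkeeping: the exact expression involves $r$ nested telescoping products plus the corrective factors $(j_i - 2(i-1))/j_i$, so one must argue carefully that the dominant contribution comes from tuples with every $j_i$ of order $\ell$, and that the boundary contributions (small $j_i$, $j_i$ near $\ell$, and the $O(1/\ell)$ deviations from the leading order) are negligible. These are elementary but slightly tedious estimates; the remainder is the standard moment-method conclusion, valid because Poisson distributions are characterised by their moments.
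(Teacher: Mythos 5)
Your argument is correct, and it reaches the paper's conclusion by a genuinely different route. The paper also starts from the forward-time construction, but instead of per-event indicators it couples $\kappa_\ell$ to an explicit Markov chain (Proposition~\ref{propRTCNCherriesChain}) recording how each newly added leaf creates or destroys cherries; this yields the exact recursion
\[
  m_{\ell + 1}^n \;=\; \Bigl(\frac{\ell - 2n}{\ell}\Bigr)^{2} m_\ell^n
  \;+\; \frac{n(\ell - 2n + 2)}{\ell^2}\, m_\ell^{n - 1}
\]
for the factorial moments, which is then solved by induction on $n$ using the two elementary asymptotic lemmas of the appendix to get $m_\ell^n \to 1/4^n$, and concludes by the same moment method you invoke. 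Your decomposition $\kappa_\ell = \sum_j I_j$, with the joint law of the indicators factorizing exactly over the independent inputs $(s_k^\circ, s_k^\bullet)$, replaces that recursion with closed-form joint probabilities and a Riemann-sum passage to the limit; your formula $\Prob{I_j = 1} = j(j-1)^2/\bigl((\ell-1)^2(\ell-2)^2\bigr)$ is right and indeed reproduces the paper's exact mean $\Expec{\kappa_\ell} = (3\ell-5)\ell/\bigl(12(\ell-1)(\ell-2)\bigr)$, as does your $r$-fold product formula for the joint cherry probabilities (I checked the telescoping: the exponents combine to give exactly $\prod_i j_i^3/\ell^{4r}$ to leading order). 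What your route buys is transparency: it makes visible that a branching occurring at relative time $x$ survives as a cherry with probability $\approx x^4$, so that $1/4 = \int_0^1 x^3\,dx$ is no accident, and it gives the exact joint distribution of the cherry indicators rather than only a moment recursion. What the paper's route buys is uniform bookkeeping: the recursion absorbs all boundary effects automatically, with no case analysis for tuples where some $j_i$ is $O(1)$ or $j_i \le 2(i-1)$ (where your factor $(j_i - 2(i-1))/j_i^2$ must be read as zero), and the same chain-plus-recursion machinery is recycled verbatim for tridents in Section~\ref{secRTCNsRetCherries}. The two items you flag as remaining work — uniform domination of the joint probabilities so the Riemann sum converges, and negligibility of boundary tuples — are real but routine; for instance, $(k-2i)/k \le (k-2)/k$ gives an upper bound of a constant (depending on $r$) times $\prod_i (j_i/\ell)^3\,\ell^{-r}$, which suffices for dominated convergence. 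So the proposal is complete in outline and sound.
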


The proof consists in coupling $\kappa_\ell$ with a Markov chain to show
that the factorial moments of $\kappa_\ell$ (i.e.
$m_\ell^n \defas \Expec{\kappa_\ell(\kappa_\ell - 1)\cdots(\kappa_\ell - n + 1)}$) 
satisfy the recursion:
\[
  m_{\ell + 1}^n \;=\;
  \mleft(\frac{\ell - 2n}{\ell}\mright)^2 m_\ell^n \;+\;
  \frac{n (\ell - 2n + 2)}{\ell^2}\, m_\ell^{n - 1} \,,
\]
and then using this recursion to prove that $m^{n}_\ell \to 1/4^n$ as
$\ell \to \infty$ for all $n$. We separate each of these steps into different
propositions.

\begin{proposition} \label{propRTCNCherriesChain}
Let $(X_\ell)_{\ell \geq 2}$ be the Markov chain defined by:
\begin{mathlist}
\item $X_2 = 1$.
\item For $\ell \geq 2$, conditional on $X_\ell = k$,
  \[
    X_{\ell + 1} \;=\;
    \begin{dcases}
      k - 2,  \text{with probability} & \frac{2k(2k - 2)}{\ell^2};\\ 
      k - 1, \,\qquad/\!\!/ & \frac{2k + 4k(\ell - 2k)}{\ell^2}\\ 
      k,  \qquad\qquad/\!\!/ & \frac{(\ell - 2k)(\ell - 2k - 1) + 2k}{\ell^2};\\
      k + 1, \qquad\;/\!\!/&\frac{\ell - 2k}{\ell^2}.
    \end{dcases}
  \]
\end{mathlist}
Then, for all $\ell\geq 2$, $X_\ell$ is distributed as the number
$\kappa_\ell$ of cherries of a uniform RTCN with $\ell$ labeled leaves.
\end{proposition}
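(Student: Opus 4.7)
The plan is to use the backward-time construction (Proposition~\ref{corUniformRTCNBackward}) to couple a uniform RTCN $\nu_\ell$ on $\ell$ labeled leaves with a uniform RTCN $\nu_{\ell+1}$ on $\ell+1$ labeled leaves, and then to read off how the cherry count evolves under this coupling.

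In the backward construction of $\nu_{\ell+1}$, the first (i.e.\ most recent) event is uniform over the $\binom{\ell+1}{2}$ possible coalescences and the $(\ell-1)\binom{\ell+1}{2}$ possible reticulations, so it is a coalescence with overall probability $1/\ell$ and a reticulation with probability $(\ell-1)/\ell$. By Proposition~\ref{corUniformRTCNBackward} applied to the remaining backward steps, the $\ell$ lineages produced by this first event (relabeled by $\{1, \ldots, \ell\}$) form the leaves of a uniform RTCN $\nu_\ell$, and by the symmetry of the uniform distribution in the labels, the lineage or pair of lineages identified with the first event corresponds to a uniformly chosen leaf (resp.\ a uniformly chosen unordered pair of distinct leaves) of $\nu_\ell$. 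Equivalently, $\nu_{\ell+1}$ can be obtained from $\nu_\ell$ as follows: in the coalescence case, pick a uniformly chosen leaf $v$ of $\nu_\ell$ and expand it into a cherry whose two children are new leaves; in the reticulation case, pick a uniformly chosen unordered pair $\{v_1, v_2\}$ of distinct leaves of $\nu_\ell$ and expand it into the two tree-child parents of a new reticulation event, thereby adding three new leaves.

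Given $\kappa_\ell = k$, the cherry count under each of these operations is then straightforward to track. In the coalescence case, expanding $v$ always creates one new cherry; if $v$ belonged to a cherry in $\nu_\ell$ (probability $2k/\ell$), the leaf that was $v$'s sibling now has a non-leaf sibling, the old cherry is destroyed and the net change is $0$, while otherwise (probability $(\ell-2k)/\ell$) the net change is $+1$. In the reticulation case, each of $v_1, v_2$ that lies in a cherry of $\nu_\ell$ destroys that cherry, with the essential caveat that when $v_1$ and $v_2$ are themselves siblings inside a \emph{single} cherry the two destruction contributions collapse into one. Splitting the pair $\{v_1, v_2\}$ into the four sub-cases ``both in the same cherry / both in different cherries / exactly one in a cherry / neither in a cherry'' yields pair-counts $k$, $2k(k-1)$, $2k(\ell - 2k)$ and $\binom{\ell - 2k}{2}$ out of $\binom{\ell}{2}$. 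Weighting by $1/\ell$ and $(\ell-1)/\ell$ respectively and simplifying recovers the four transition probabilities stated in the proposition.

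The only subtle point is the sibling sub-case: a naive ``one destroyed cherry per expanded-leaf-in-cherry'' rule overcounts by one and would give an incorrect contribution to the transition $k \mapsto k-2$. Once the four conditional probabilities above are checked to depend on $\nu_\ell$ only through $\kappa_\ell$ (by the exchangeability of the leaf labels of a uniform RTCN), the proposition follows by induction on $\ell$ from the trivial base case $\kappa_2 = X_2 = 1$.
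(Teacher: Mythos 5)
Your proof is correct, and it reaches the same one-step transition mechanism as the paper but justifies it from the opposite direction. The paper's proof runs \emph{forward} in time: it uses the forward-time construction of Section~\ref{secRTCNOriented} to build a nested sequence $(\nu_\ell^\circ)_{\ell\ge2}$ of uniform \emph{decorated} RTCNs, where passing from $\nu_\ell^\circ$ to $\nu_{\ell+1}^\circ$ means drawing an ordered pair $(u,v)$ of leaves uniformly among the $\ell^2$ possibilities (branch if $u=v$, hybridize otherwise); uniformity at every stage is then automatic from the bijection with pairs of subexcedant sequences, and the case analysis on cherries is read off directly. You instead run \emph{backward}: you peel the most recent event off a uniform $\nu_{\ell+1}$, observe via Proposition~\ref{corUniformRTCNBackward} that it is a coalescence with probability $1/\ell$ and a reticulation with probability $(\ell-1)/\ell$, and argue by the independence of the backward steps and exchangeability of the labels that the remaining network is a uniform $\nu_\ell$ carrying an independent uniformly marked leaf (resp.\ uniformly marked unordered pair). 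This buys you a proof that works directly with uniform leaf-labeled RTCNs, with no decorations; the price is the deconstruction claim itself, which is the one mildly delicate point of your argument and which you justify adequately (it does follow from the uniform-and-independent structure of the backward encoding, though stating the joint law of the marked pair and the residual RTCN explicitly would make it airtight). Your unordered-pair counts weighted by $(\ell-1)/\ell$ reproduce exactly the paper's ordered-pair counts over $\ell^2$ (e.g.\ $\frac{\ell-1}{\ell}\cdot 2k(k-1)/\binom{\ell}{2} = 2k(2k-2)/\ell^2$ for two distinct cherries), and your flagged caveat about the same-cherry sibling pair collapsing two destructions into one is precisely the paper's case 2(i), correctly routed into the $k\mapsto k-1$ transition rather than $k\mapsto k-2$.
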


\begin{proof}
Let us use the forward-time construction to build a nested sequence
$(\nu^\circ_{\ell})_{\ell \geq 2}$ of uniform decorated RTCNs.
Recall from Section~\ref{secRTCNOriented} that to go from $\nu_{\ell}^\circ$ to
$\nu_{\ell + 1}^\circ$, we choose an ordered pair $(u, v)$ of lineages
$\nu_\ell^\circ$ uniformly at random. If
$u = v$, we let the lineage branch; otherwise we let $u$ and $v$ hybridize.

Now, assume that $\nu^\circ_\ell$ has $k$ cherries, so that
there are $2k$ lineages that belong to cherries (which we refer to
as C leaves) and $\ell - 2k$ lineages that do not (non-C leaves).
Let us list all possible choices of $(u, v)$ and see their effect on the
number of cherries.
\begin{enumerate}
\item If the next event is a branching:
\begin{mathlist}
\item If a C leaf is chosen, the number cherries does not change. This
  happens with probability $2k / \ell^2$.
\item If a non-C leaf is chosen, one cherry is created (probability:
  $(\ell - 2k)/\ell^2$).
\end{mathlist}
\item If the next event is a hybridization:
\begin{mathlist}
\item If the two leaves of a same cherry are chosen, that
  cherry is destroyed (probability: $2k / \ell^2$).
\item If two leaves of two different cherries are chosen, these two
  cherries are destroyed (probability: $2k (2k - 2) / \ell^2$).
\item If a C leaf and a non-C leaf are chosen,
  one cherry is destroyed (probability: $4k(\ell - 2k) / \ell^2$).
\item If two non-C leaves are chosen, the number of cherries does not change 
  (probability: $(\ell - 2k)(\ell - 2k - 1) / \ell^2$).
\end{mathlist}
\end{enumerate}

Doing the book-keeping and observing that a RTCN with two leaves has one
cherry concludes the proof.
\end{proof}

\vspace{-1.1ex}

\begin{notation}
We denote by $\fallfact{x}{n} = \prod_{k = 0}^{n - 1}(x - k)$ the
$n$-th falling factorial of $x$.
\end{notation}

\begin{proposition} \label{propRTCNCherriesRecur}
Let $(X_\ell)_{\ell \geq 2}$ be the Markov chain defined in
Proposition~\ref{propRTCNCherriesChain} and let
$m_\ell^n = \Expec{\fallfact{X_\ell}{n}}$ denote the $n$-th factorial moment
of $X_\ell$. Then,
\[
  m_{\ell + 1}^n \;=\;
  \mleft(\frac{\ell - 2n}{\ell}\mright)^2 m_\ell^n \;+\;
  \frac{n (\ell - 2n + 2)}{\ell^2}\, m_\ell^{n - 1} \,.
\]
\end{proposition}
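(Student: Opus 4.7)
The plan is to prove the recursion by conditioning on $X_\ell$ and directly computing $\Expec{\fallfact{X_{\ell+1}}{n} \mid X_\ell = k}$ from the four transition probabilities given in Proposition~\ref{propRTCNCherriesChain}. Writing $E_k = \Expec{\fallfact{X_{\ell+1}}{n} \mid X_\ell = k}$, the tower property yields $m^n_{\ell+1} = \Expec{E_{X_\ell}}$, so once we show that
\[
\ell^2 E_k \;=\; (\ell - 2n)^2 \fallfact{k}{n} + n(\ell - 2n + 2) \fallfact{k}{n-1},
\]
the stated recursion follows by taking expectations. Thus the entire proof reduces to one algebraic identity.

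To handle the identity, I would multiply $E_k$ through by $\ell^2$ to clear denominators and rewrite each term via standard falling-factorial identities that absorb the extra factors of $k$ hidden inside the transition probabilities. Specifically, I would use
\[
k(k-1)\,\fallfact{(k-2)}{n} = \fallfact{k}{n+2}, \qquad k\,\fallfact{(k-1)}{n} = \fallfact{k}{n+1},
\]
together with $\fallfact{(k+1)}{n} = \fallfact{k}{n} + n\,\fallfact{k}{n-1}$ and the telescoping relation $k\,\fallfact{k}{m} = \fallfact{k}{m+1} + m\,\fallfact{k}{m}$ (obtained by writing $k = (k-m) + m$). Iterating the latter, any polynomial multiple of a falling factorial can be expanded in the basis $\{\fallfact{k}{n+2},\fallfact{k}{n+1},\fallfact{k}{n},\fallfact{k}{n-1}\}$.

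With these tools in hand, I would expand each of the four contributions in $\ell^2 E_k$ separately and collect like terms. The key consistency check -- and the core of the computation -- is that the coefficients of $\fallfact{k}{n+2}$ and of $\fallfact{k}{n+1}$ must vanish, since the claimed recursion only involves $m^n_\ell$ and $m^{n-1}_\ell$. A direct accounting shows that the $\fallfact{k}{n+2}$ coefficient is $4 - 8 + 4 = 0$ (contributions coming respectively from the $X_{\ell+1} = k-2$, $X_{\ell+1} = k-1$, and $X_{\ell+1} = k$ transitions), and that the $\fallfact{k}{n+1}$ terms cancel similarly after absorbing the $(2n+1)$ and $4\ell$ pieces. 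Collecting the surviving $\fallfact{k}{n}$ coefficient yields exactly $\ell^2 - 4n\ell + 4n^2 = (\ell - 2n)^2$, and the $\fallfact{k}{n-1}$ coefficient comes solely from the $X_{\ell+1}=k+1$ branch and simplifies to $n(\ell - 2n + 2)$, as required.

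The main obstacle is purely bookkeeping: the cancellations in the $\fallfact{k}{n+2}$ and $\fallfact{k}{n+1}$ coefficients are not individually small and require carefully tracking how each of the four quadratic-in-$k$ transition weights interacts with the identity $k\,\fallfact{k}{m}=\fallfact{k}{m+1}+m\,\fallfact{k}{m}$ applied once or twice. There is no conceptual difficulty -- and indeed one may alternatively use the Vandermonde identity $\fallfact{(k+j)}{n}=\sum_{i}\binom{n}{i}\fallfact{k}{n-i}\fallfact{j}{i}$ for $j\in\{-2,-1,0,1\}$ to produce the same expansion -- but the algebra must be performed with care because isolated coefficients are large and cancel only after summing over all four transitions.
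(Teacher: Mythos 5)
Your proposal is correct and takes essentially the same route as the paper: condition on $X_\ell = k$, compute $\Expec{\fallfact{X_{\ell+1}}{n} \mid X_\ell = k}$ directly from the four transition probabilities of Proposition~\ref{propRTCNCherriesChain}, verify the algebraic identity, and integrate against the law of $X_\ell$. The only difference is organizational rather than conceptual -- the paper factors out $\fallfact{k}{n}$ and simplifies the resulting rational expression, whereas you expand everything in the basis $\fallfact{k}{n+2}, \fallfact{k}{n+1}, \fallfact{k}{n}, \fallfact{k}{n-1}$ and check the coefficient cancellations (all of which I verified: $4-8+4=0$, the $\fallfact{k}{n+1}$ coefficients sum to $(4\ell-8n-6)+(-4\ell+8n+8)-2=0$, the $\fallfact{k}{n}$ coefficient is $(\ell-2n)^2$, and $\fallfact{k}{n-1}$ receives $n(\ell-2n+2)$ from the $k+1$ branch alone), which as a minor bonus avoids the divisions by $k$, $k-1$ and $k-n+1$ implicit in the paper's ratios.
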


\begin{proof}
Let $p_{(-2)}, p_{(-1)}, p_{(+0)}$ and $p_{(+1)}$ denote the transition
probabilities of $X_\ell$, conditional on $X_\ell = k$. Then,
\enlargethispage{3ex}
\begingroup
\allowdisplaybreaks 
\begin{align*}
  &\hspace{-2ex} \Expec{\fallfact{X_{\ell + 1}}{n} \given X_\ell = k} \\[2ex]
  &=
  \fallfact{(k - 2)}{n}\, p_{(-2)} \;+\;
  \fallfact{(k - 1)}{n}\, p_{(-1)} \;+\;
  \fallfact{k}{n}\, p_{(+0)} \;+\;
  \fallfact{(k + 1)}{n}\, p_{(+1)} \\[2ex]
  &=\fallfact{k}{n}\; \mleft(
  \frac{(k - n)(k - n - 1)}{k(k - 1)} p_{(-2)}\;+\;
  \frac{k - n}{k} p_{(-1)}\;+\;
  p_{(+0)}\;+\;
  \frac{k + 1}{k - n + 1} p_{(+1)}\mright) \\[1ex]
  &=\frac{\fallfact{k}{n}}{\ell^2} \; \Bigg(
  4 (k - n)(k - n - 1)\;+\; 
  2(k - n)(1 + 2(\ell-2k))\\[-2ex]
  &\hspace{28ex} \;+\; (\ell - 2k)(\ell - 2k - 1) + 2k +
  \frac{(k + 1)(\ell - 2k)}{k - n + 1} \Bigg).
\end{align*}  
\endgroup
After a little algebra to rearrange this last expression, we get
\[
  \Expec{\fallfact{X_{\ell + 1}}{n} \given X_\ell = k} \;=\;
  \fallfact{k}{n}\,\mleft(\frac{\ell - 2n}{\ell}\mright)^2 \;+\;
  \fallfact{k}{n - 1}\;\frac{n (\ell + 2 - 2n)}{\ell^2},
\]
and the proposition follows by integrating in $k$.
\end{proof}

\begin{proposition} ~
\begin{mathlist}
\item For all $\ell \geq 2$,
  $\Expec{\kappa_\ell} = \frac{(3\ell - 5) \ell}{12 (\ell - 1)(\ell - 2)}$.
\item For all $n \geq 1$, $\Expec{\fallfact{\kappa_\ell}{n}} \to 1/4^n$ as
  $\ell \to \infty$. As a result,
  $\kappa_\ell \tendsto[d]{\ell \to \infty} \mathrm{Poisson}(1/4)$.
\end{mathlist}
\end{proposition}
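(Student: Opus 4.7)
Part (i) follows directly from Proposition~\ref{propRTCNCherriesRecur} with $n = 1$. Specialized to $n = 1$ (using $m_\ell^0 \equiv 1$), the recursion reads $m_{\ell+1}^1 = \bigl(\tfrac{\ell-2}{\ell}\bigr)^2 m_\ell^1 + \tfrac{1}{\ell}$, with initial condition $m_2^1 = 1$ (the unique RTCN with two leaves is a cherry). My plan is to verify the closed-form expression by induction on $\ell \ge 3$, checking the base case $m_3^1 = 1/2$ directly and reducing the inductive step, after clearing denominators, to the polynomial identity $(\ell-2)(3\ell-5) + 12(\ell-1) = (3\ell-2)(\ell+1)$.

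For part (ii), I would induct on $n$. The base case $n = 0$ is trivial. Assuming $m_\ell^{n-1} \to 1/4^{n-1}$, set $\delta_\ell := m_\ell^{n-1} - 1/4^{n-1}$ and $r_\ell := m_\ell^n - 1/4^n$. Plugging these into the recursion of Proposition~\ref{propRTCNCherriesRecur}, the key algebraic miracle is a first-order cancellation in $1/\ell$: from the expansions $1 - \bigl(\tfrac{\ell-2n}{\ell}\bigr)^2 = \tfrac{4n}{\ell} + O(\tfrac{1}{\ell^2})$ and $\tfrac{n(\ell-2n+2)}{\ell^2} = \tfrac{n}{\ell} + O(\tfrac{1}{\ell^2})$, together with the identity $\tfrac{4n}{\ell}\cdot\tfrac{1}{4^n} = \tfrac{n}{\ell}\cdot\tfrac{1}{4^{n-1}}$, the constant parts cancel at order $1/\ell$ and the recursion reduces to
\[
  r_{\ell+1} \;=\; \Bigl(1 - \tfrac{4n}{\ell} + O(\tfrac{1}{\ell^2})\Bigr)\, r_\ell \;+\; \tfrac{n\,\delta_\ell}{\ell} \;+\; O(\tfrac{1}{\ell^2}).
\]

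Iterating from some large $L$ and using $\prod_{j=k+1}^{\ell-1}\bigl(1 - \tfrac{4n}{j} + O(\tfrac{1}{j^2})\bigr) \sim (k/\ell)^{4n}$, one obtains $r_\ell \sim (L/\ell)^{4n} r_L + \sum_{k=L}^{\ell-1}(k/\ell)^{4n}\bigl(\tfrac{n\delta_k}{k} + O(\tfrac{1}{k^2})\bigr)$. The boundary term and the $O(1/k^2)$ contribution both tend to $0$ as $\ell \to \infty$ (the latter because $\sum_{k=L}^{\ell-1}(k/\ell)^{4n}/k^2 = O(1/\ell)$ for $n \ge 1$); for the main sum, given $\varepsilon > 0$ choose $L$ so that $|\delta_k| \le \varepsilon$ for $k \ge L$, which yields the bound $\varepsilon n \sum_{k=L}^{\ell-1}(k/\ell)^{4n}/k \le \varepsilon/4$, and letting $\varepsilon \to 0$ gives $r_\ell \to 0$. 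Convergence in distribution to $\mathrm{Poisson}(1/4)$ then follows from the method of moments, since the Poisson law is determined by its moments and the $n$-th factorial moment of $\mathrm{Poisson}(1/4)$ is exactly $(1/4)^n$.

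The main obstacle is the asymptotic analysis of the perturbed linear recursion for $r_\ell$: the decay factor $(k/\ell)^{4n}$ is only polynomial, so it is essential that the first-order $1/\ell$ term cancels exactly thanks to the coincidence $\tfrac{4n}{4^n} = \tfrac{n}{4^{n-1}}$, leaving a source term that genuinely tends to $0$ faster than the natural $1/\ell$ scale preserved by the recursion. Without this cancellation, $r_\ell$ would converge to a nonzero limit and the inductive step would collapse; with it, everything else is routine bookkeeping.
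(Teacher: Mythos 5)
Your proof is correct, and it shares the paper's overall skeleton --- the factorial-moment recursion of Proposition~\ref{propRTCNCherriesRecur}, induction on $n$, and the method of moments --- but the analytic core is executed genuinely differently. The paper solves the linear recursion exactly by variation of constants (Lemma~\ref{lemmaRTCNRecur}), using the identity $\prod_{j=2n+1}^{k}\bigl(\tfrac{j-2n}{j}\bigr)^2 = \binom{k}{2n}^{-2}$ to obtain a closed representation of $m_\ell^n$, and then extracts the limit $1/4^n$ from binomial asymptotics together with a Ces\`aro-type summation lemma (Lemma~\ref{lemmaStupid}); part (i) is likewise obtained by solving the $n=1$ recursion explicitly. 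You instead verify the closed form in (i) by induction (your identity $(\ell-2)(3\ell-5)+12(\ell-1)=(3\ell-2)(\ell+1)$ checks out: both sides equal $3\ell^2+\ell-2$), and for (ii) you linearize around the candidate fixed point $4^{-n}$: the exact first-order cancellation $\tfrac{4n}{\ell}\,4^{-n}=\tfrac{n}{\ell}\,4^{-(n-1)}$ reduces everything to the stability of the perturbed recursion for $r_\ell$, which you settle by a discrete Gr\"onwall/$\varepsilon$-splitting argument. Your product estimate is in fact available exactly, since $\prod_{j=k+1}^{\ell-1} a_j = \bigl(\binom{k}{2n}\big/\binom{\ell-1}{2n}\bigr)^2 \leq C_{n,L}\,(k/\ell)^{4n}$ uniformly for $L \leq k \leq \ell-1$, which is all your bounds require; your estimates of the boundary term, the $O(1/k^2)$ contribution, and the main sum ($\leq \varepsilon/4$ up to a constant) are all sound. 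What each approach buys: the paper's exact solution gives an explicit formula for $m_\ell^n$ and would in principle yield convergence rates, while yours is softer but isolates the structural reason the limit is $1/4^n$ --- it is the unique balance point between the contraction $-4n/\ell$ and the source term $n\,4^{-(n-1)}/\ell$ --- and dispenses with the binomial identities entirely. One pedantic point in your favor: the closed form in (i) is undefined at $\ell = 2$ (the statement's ``for all $\ell \geq 2$'' is an off-by-one, since $\kappa_2 = 1$ while the right-hand side has a vanishing denominator), and your induction correctly establishes it for $\ell \geq 3$ from the base case $m_3^1 = 1/2$.
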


\begin{proof}
The expression for $\Expec{\kappa_\ell}$ given in (i) follows from
Lemma~\ref{lemmaRTCNRecur} and routine calculations.
To prove (ii), we proceed by induction on $n$. First, we see from the
expression in (i) that $m_\ell^1 \to 1/4$ as $\ell \to \infty$.
Now suppose that $n\geq 2$ and that $m_\ell^{n - 1} \to 1/4^{n-1}$ as $\ell \to \infty$.
By Proposition~\ref{propRTCNCherriesRecur}, 
\[
  m_{\ell + 1}^n \;=\;
  a_\ell \, m_\ell^n \;+\; b_\ell\,, 
\]
where
\[
  a_\ell = \mleft(\frac{\ell - 2n}{\ell}\mright)^2
  \quad\text{and}\quad
  b_\ell = 
  \frac{n (\ell - 2n + 2)}{\ell^2} \, m_\ell^{n - 1} \, .
\]
Since $a_\ell \neq 0$ for all $\ell \geq 2n + 1$, 
application of Lemma~\ref{lemmaRTCNRecur} together with the identity
\[
  \prod_{j = 2n + 1}^{k}
  \mleft(\frac{j - 2n}{j}\mright)^2 \;=\;
  \frac{1}{\binom{k}{2n}^2} 
\]
gives
\[
  m_{\ell}^{n} \;=\;\mleft(m_{2n + 1}^n + \sum_{k = 2n + 1}^{\ell - 1}
  \frac{n (k - 2n + 2)}{k^2} \, \binom{k}{2n}^2 \, m_k^{n - 1} \, 
  \mright) \;\Big/\; \binom{\ell - 1}{2n}^2.
\]
Now, as $k\to \infty$, $\binom{k}{2n}\sim k^{2n} / (2n)!$
and, by the induction hypothesis, ${m_k^{n - 1} \sim 1/4^{n - 1}}$. As
a result,
\[
  \frac{n (k - 2n + 2)}{k^2} \, \binom{k}{2n}^2 \, m_k^{n - 1} \;\sim\;
  \frac{n /4^n}{(2n)!\,^2} \; k^{4n - 1} \, .
\]
Using Lemma~\ref{lemmaStupid} to get an asymptotic equivalent of the sum
of these terms,
\[
  m_{\ell}^{n} \;\sim\;
  \frac{n / 4^{n - 1}}{(2n)!\,^2 } \cdot \frac{\ell^{4n}}{4n}\cdot
    \mleft(\frac{(2n)!}{\ell^{2n}}\mright)^2 \;=\;
   1/4^n \, .
\]
The convergence in distribution of $X_\ell$ to a Poisson distribution
with mean $1/4$ is then a classic result (see e.g.\ Theorem~2.4
in \cite{VanDerHofstad2016}).
\end{proof}

\subsection{Number of tridents} \label{secRTCNsRetCherries}

In this section, we prove the second part of
Theorem~\ref{thmRTCNCherriesRetCherries}.

\begin{reptheorem}{thmRTCNCherriesRetCherries}[Part (ii)]
Let $\chi_\ell$ be the number of tridents of a uniform RTCN with
$\ell$ labeled leaves. Then,
\[
  \frac{\chi_\ell}{\ell}\; \tendsto[\P]{\ell \to \infty}\; \frac{1}{7} \, .
\]
\end{reptheorem}

The proof is quite similar to that used in the previous section to study
the number $\kappa_\ell$ of cherries --
namely, we couple $\chi_\ell$ with a Markov chain in order to compute its
moments. The difference is that the moments of $\chi_\ell$ are not as tractable
as those of $\kappa_\ell$. As a result, we only compute the first two
moments explicitly and then use Chebyshev's inequality to prove the
convergence in probability.

\begin{proposition} \label{propRTCNMarkovchainRetCherries}
Let $(X_\ell)_{\ell \geq 2}$ be the Markov chain defined by
\begin{mathlist}
\item $X_2 = 0$
\item For $\ell \geq 2$, conditional on $X_\ell = k$,
  \[
    X_{\ell + 1} \;=\;
    \begin{dcases}
      k - 1, \text{ with probability } & \frac{3k\, (3k - 2)}{\ell^2};\\ 
      k,  \qquad\qquad/\!\!/ & 1 -
      \frac{3k\, (3k - 2) + (\ell - 3k)(\ell - 3k - 1)}{\ell^2};\\
      k + 1, \qquad\;/\!\!/&\frac{(\ell - 3k)(\ell - 3k - 1)}{\ell^2}.
    \end{dcases}
  \]
\end{mathlist}
Then, for all $\ell\geq 1$, $X_\ell$ has the distribution of the number of
tridents of a uniform RTCN with $\ell$ leaves.
\end{proposition}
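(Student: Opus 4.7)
The plan is to adapt the coupling argument used in the proof of Proposition~\ref{propRTCNCherriesChain}. Invoking Proposition~\ref{propForwardAlgoUnif}, I build a nested sequence $(\nu_\ell^\circ)_{\ell \geq 2}$ of uniform decorated RTCNs by picking at each step an ordered pair $(u, v)$ of current lineages uniformly among the $\ell^2$ possibilities: when $u = v$, lineage $u$ branches; when $u \neq v$, $u$ and $v$ hybridize to produce a new lineage. Writing $\chi_\ell$ for the number of tridents in $\nu_\ell^\circ$, the goal is to check that $(\chi_\ell)_{\ell \geq 2}$ has the transitions displayed in the statement.

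The central structural observation is that each trident at step $\ell$ is unambiguously associated to three leaf-lineages---the hybrid's single child together with the tree children of the two hybrid-parents---and these three lineages mutually determine one another. Crucially, as soon as any of these three lineages participates in a subsequent event, the corresponding terminal vertex becomes internal and the trident is destroyed. Therefore, if there are $k$ tridents at step $\ell$, exactly $3k$ lineages are \emph{T-lineages} (disjoint across tridents) and $\ell - 3k$ are \emph{non-T-lineages}.

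It then remains to enumerate how the event added at step $\ell+1$ modifies the count, keeping in mind two simple facts: (a) branching never creates a trident, and (b) any hybridization produces a new reticulation event whose three tree-children are all leaves of $\nu_{\ell+1}^\circ$, so it is automatically a trident. An old trident is destroyed precisely when one of its T-lineages participates in the new event. The case analysis is then routine: branching on a T-lineage ($3k$ ordered pairs) destroys one trident while branching on a non-T lineage leaves the count unchanged; hybridizing two non-T lineages ($(\ell - 3k)(\ell - 3k - 1)$ ordered pairs) creates one trident without destroying any; hybridizing two T-lineages from \emph{different} old tridents ($3k \cdot 3(k-1) = 9k(k-1)$ ordered pairs) destroys two tridents while creating one; and the remaining cases---one T with one non-T, or two T-lineages from the \emph{same} trident---yield no net change.

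Summing the $\chi \to \chi - 1$ contributions gives $3k + 9k(k-1) = 3k(3k-2)$ ordered pairs, while the $\chi \to \chi + 1$ contributions total $(\ell - 3k)(\ell - 3k - 1)$; division by $\ell^2$ yields the transition probabilities in the statement. The one point that requires genuine care---and is, in my view, the main obstacle---is recognising that when both hybridizing lineages belong to the same old trident, only one (not two) old trident is destroyed while the new event is itself a trident, producing no net change. Once this subtlety is correctly isolated, the rest of the bookkeeping follows directly.
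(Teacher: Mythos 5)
Your proposal is correct and follows essentially the same route as the paper's own proof: the same forward-time coupling of a nested sequence of uniform decorated RTCNs (via Proposition~\ref{propForwardAlgoUnif}), the same partition of leaves into the $3k$ trident-associated lineages versus the $\ell - 3k$ others, and the same exhaustive case analysis, with your counts $3k + 9k(k-1) = 3k(3k-2)$ and $(\ell-3k)(\ell-3k-1)$ matching the paper's bookkeeping exactly. The subtlety you flag---that hybridizing two lineages of the \emph{same} trident destroys one trident while the new reticulation event is itself a trident, giving no net change---is precisely the paper's case 2(iii)b, so there is nothing missing.
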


\begin{proof}
As in the proof of Proposition~\ref{propRTCNCherriesChain}, let us
consider a nested sequence of uniform decorated
RTCNs $(\nu_\ell^\circ)_{\ell \geq 2}$
produced by the forward-time construction, and see how the number of
tridents is affected when we go from
$\nu_\ell^\circ$ to $\nu_{\ell + 1}^\circ$.

Assuming that there are $k$ tridents in $\chi_\ell$, there are
$3k$ leaves associated with tridents (RC leaves) and
$\ell - 3k$ other leaves (non-RC leaves). The possible cases are the following:
\pagebreak
\begin{enumerate}
  \item The next event is a branching:
    \begin{mathlist}
      \item If a non-RC leaf is chosen, the number of tridents
        does not change. This happens with probability $(\ell - 3k) / \ell^2$.
      \item If a RC leaf is chosen, the corresponding trident
        is destroyed (probability: $3k / \ell^2$).
    \end{mathlist}
  \item The next event is a reticulation:
    \begin{mathlist}
       \item If two non-RC leaves are chosen, one trident is created
         (probability: $(\ell - 3k)(\ell - 3k - 1)/\ell^2$).
       \item If a non-RC leaf and a RC leaf are chosen, one trident
         is destroyed and one is created
         (probability: $6k(\ell - 3k)$).
       \item If two RC leaves are chosen: 
         \begin{enumerate}[a.]
           \item If they do not belong to two different tridents,
             these are destroyed and a new
             trident is created
             (probability: $3k(3k - 3) / \ell^2$).
            \item If they belong to the same trident, 
              this trident is destroyed and another one is created
              (probability: $6k/\ell^2$).
         \end{enumerate}
    \end{mathlist}
\end{enumerate}
Doing the book-keeping and observing that a RTCN with two leaves has
zero tridents yields the proposition.
\end{proof}

\begin{proposition} \label{propRTCNExpecRetCherries}
The expected number $\mu_\ell = \Expec{\chi_\ell}$ of tridents
of a uniform RTCN with $\ell$ leaves 
$\mu_\ell$ satisfies the recursion 
\[
  \mu_{\ell + 1} = \mleft(\frac{\ell - 3}{\ell}\mright)^2 \mu_\ell \;+\;
  \frac{\ell - 1}{\ell} \,.
\]
As a result, we have $\mu_2 = 0$, $\mu_3 = 1/2$, $\mu_4 = 2/3$ and, for
$\ell \geq 4$,
\[
  \mu_\ell \;=\; \frac{(15 \, \ell^3 - 85 \, \ell^2 + 144 \, \ell - 71)\, \ell}
                      {105 \, (\ell - 1) (\ell - 2) (\ell - 3)} \,.
\]
\end{proposition}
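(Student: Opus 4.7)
The plan is to derive the recursion for $\mu_\ell$ directly from the one-step transitions of the Markov chain in Proposition~\ref{propRTCNMarkovchainRetCherries}, and then solve the resulting first-order linear recursion by the standard machinery of Lemma~\ref{lemmaRTCNRecur}.

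For the recursion, I would condition on $X_\ell = k$ and use that $X_{\ell+1} - X_\ell \in \{-1, 0, +1\}$, so only the transitions to $k \pm 1$ contribute to the change in conditional mean:
\[
\Expec{X_{\ell+1} \mid X_\ell = k} \;=\; k \;+\; \frac{(\ell - 3k)(\ell - 3k - 1) - 3k(3k - 2)}{\ell^2}.
\]
Expanding the numerator yields $\ell^2 - \ell - 6k\ell + 9k$, which one recognises as $k(\ell-3)^2 + \ell(\ell-1) - k\ell^2$. Dividing by $\ell^2$ and integrating in $k$ then produces
\[
\mu_{\ell+1} \;=\; \Big(\frac{\ell-3}{\ell}\Big)^{\!2}\mu_\ell \;+\; \frac{\ell-1}{\ell},
\]
as claimed.

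The small cases $\mu_2 = 0$, $\mu_3 = 1/2$ and $\mu_4 = 2/3$ follow immediately by iterating from the initial condition $X_2 = 0$; note that the coefficient $\bigl(\frac{\ell-3}{\ell}\bigr)^{2}$ vanishes at $\ell = 3$, which forces $\mu_4 = 2/3$ regardless of $\mu_3$. For $\ell \geq 4$ the coefficient $a_\ell = \bigl(\frac{\ell-3}{\ell}\bigr)^{2}$ is nonzero, so Lemma~\ref{lemmaRTCNRecur} applies. The key observation is the telescoping identity
\[
\prod_{j=k+1}^{\ell-1} \frac{j-3}{j} \;=\; \frac{k(k-1)(k-2)}{(\ell-1)(\ell-2)(\ell-3)},
\]
which leads to
\[
\mu_\ell \;=\; \frac{1}{\bigl((\ell-1)(\ell-2)(\ell-3)\bigr)^{2}}\Bigg(24 \;+\; \sum_{k=4}^{\ell-1} k(k-1)^{3}(k-2)^{2}\Bigg).
\]
The sum is a standard Faulhaber evaluation of a degree-six polynomial in $k$ and produces a polynomial of degree seven in $\ell$; dividing by $\bigl((\ell-1)(\ell-2)(\ell-3)\bigr)^{2}$ and cancelling the resulting factor of $(\ell-1)(\ell-2)(\ell-3)$ in the numerator yields the claimed cubic-over-cubic expression.

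The only real obstacle is algebraic bookkeeping in this last step: the closed form of the Faulhaber sum is explicit but unwieldy. The slickest route, and the one I would actually use, is to guess the final formula from the values of $\mu_\ell$ for $\ell \in \{4, 5, 6, 7\}$ and then verify by induction that the claimed expression satisfies the recursion. This reduces the problem to checking a polynomial identity of bounded degree, which is a finite and routine calculation.
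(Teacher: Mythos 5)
Your proposal is correct and follows essentially the same route as the paper: you derive the recursion $\mu_{\ell+1} = \bigl(\tfrac{\ell-3}{\ell}\bigr)^2\mu_\ell + \tfrac{\ell-1}{\ell}$ from the one-step transitions of the Markov chain in Proposition~\ref{propRTCNMarkovchainRetCherries} and then solve it via Lemma~\ref{lemmaRTCNRecur}, exactly as the authors do. The only difference is presentational: where the paper delegates the closing algebra to symbolic computation, you make it explicit (the telescoping product, the resulting Faulhaber sum $24 + \sum_{k=4}^{\ell-1} k(k-1)^3(k-2)^2$, and the observation that $a_3 = 0$ forces $\mu_4 = 2/3$ independently of $\mu_3$), all of which checks out.
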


\begin{proof}
Using the Markov chain of Proposition~\ref{propRTCNMarkovchainRetCherries},
we have
\begin{align*}
 \Expec{X_{\ell + 1} \given X_\ell = k}
  =& \;\; k \; +\;  \Prob{X_{\ell + 1} = k + 1 \given X_\ell = k} -
  \Prob{X_{\ell + 1} = k - 1 \given X_\ell = k} \\
  =&\;\;k\,\mleft(\frac{\ell - 3}{\ell}\mright)^2 \;+\;
  \frac{\ell - 1}{\ell} \,,
\end{align*}
and the recursion follows by integrating against the law of $X_\ell$.

The expression of $\mu_\ell$ then follows from
Lemma~\ref{lemmaRTCNRecur} and calculations that are better
performed by a symbolic computation software such as \cite{Sage}.
\end{proof}

\begin{proposition} \label{propRTCNVariance}
The variance of the number of tridents of a uniform RTCN
with $\ell$ leaves is
\[
  \Var{\chi_\ell} \;=\; \frac{24}{637}\, \ell \;+\; \frac{1}{21} \;+\; o(1) \,.
\]
\end{proposition}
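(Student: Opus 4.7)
The plan is to apply the same Markov-chain technique as in the proof of Proposition~\ref{propRTCNExpecRetCherries}, this time to the second moment. First, I would use the transitions of the chain $(X_\ell)$ of Proposition~\ref{propRTCNMarkovchainRetCherries} together with the identity
\[
  \Expec{X_{\ell+1}^2 \mid X_\ell = k} \;=\; k^2 + 2k(p_+ - p_-) + (p_+ + p_-)
\]
to write $\Expec{X_{\ell+1}^2 \mid X_\ell = k}$ as an explicit quadratic polynomial in $k$; a short calculation shows that its leading coefficient equals $\mleft(\tfrac{\ell - 6}{\ell}\mright)^{\!2}$. Integrating against the law of $X_\ell$ would then yield a linear recursion for $S_\ell := \Expec{X_\ell^2}$ in terms of $S_\ell$, $\mu_\ell$ and constants.

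Combining this with the recursion for $\mu_\ell$ from Proposition~\ref{propRTCNExpecRetCherries}, I would obtain a linear recurrence for the variance $V_\ell := S_\ell - \mu_\ell^2$ of the form
\[
  V_{\ell + 1} \;=\; \mleft(\tfrac{\ell - 6}{\ell}\mright)^{\!2} V_\ell \;+\; D(\ell),
\]
where $D(\ell)$ is an explicit rational combination of $\mu_\ell^2$, $\mu_\ell$ and $1$. Since $\mu_\ell$ is known in closed form, so is $D(\ell)$, and Lemma~\ref{lemmaRTCNRecur} gives a closed-form expression for $V_\ell$.

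To extract the asymptotics, I would expand $\mu_\ell = \tfrac{\ell}{7} + \tfrac{1}{21} + O(1/\ell)$ from the explicit formula of Proposition~\ref{propRTCNExpecRetCherries}, which gives an asymptotic expansion $D(\ell) = D_0 + D_1/\ell + O(1/\ell^2)$. Writing $V_\ell = A \ell + B + o(1)$ and using $\mleft(\tfrac{\ell - 6}{\ell}\mright)^{\!2} = 1 - \tfrac{12}{\ell} + O(1/\ell^2)$, matching the $\Theta(1)$ terms in the recurrence fixes $A$ via the balance $13\, A = D_0$, and matching the $\Theta(1/\ell)$ terms then pins down $B$.

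The hard part will be the (routine but tedious) algebraic bookkeeping needed to derive the forcing term $D(\ell)$ and to extract the precise rational constants $\tfrac{24}{637}$ and $\tfrac{1}{21}$. As in the proof of Proposition~\ref{propRTCNExpecRetCherries}, this is cleanly handled by a symbolic computation system such as \cite{Sage}.
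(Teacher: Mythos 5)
Your proposal is correct and takes essentially the same route as the paper's own proof: the identical second-moment recursion $s_{\ell+1} = \left(\tfrac{\ell-6}{\ell}\right)^2 s_\ell + \tfrac{2\ell^2 - 8\ell - 3}{\ell^2}\,\mu_\ell + \tfrac{\ell-1}{\ell}$ from the Markov chain of Proposition~\ref{propRTCNMarkovchainRetCherries}, substitution of the closed form of $\mu_\ell$, and Lemma~\ref{lemmaRTCNRecur} together with symbolic computation. Your only deviation is extracting the constants by coefficient-matching in a recurrence for the variance rather than from the paper's fully explicit rational expression for $\Var{\chi_\ell}$; this is legitimate because solutions of the homogeneous recurrence decay like $\ell^{-12}$, so the expansion $V_\ell = A\ell + B + o(1)$ is determined by the forcing term alone (and indeed $D_0 = -\tfrac{18}{49} + \tfrac{6}{7} = \tfrac{24}{49}$, so $13A = \tfrac{24}{49}$ gives $A = \tfrac{24}{637}$, consistent with the claim).
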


The proof of this proposition is exactly the same as that of
Proposition~\ref{propRTCNExpecRetCherries} but involves more complex expressions
that can be found in Section~\ref{appRTCNVariance} of the Appendix.

Finally, the convergence in probability of $\chi_\ell / \ell$ to $1/7$
follows readily from Chebyshev's inequality and
the fact that $\Expec{\chi_\ell} \sim \ell/7$ and
$\Var{\chi_\ell} = O(\ell)$.


\section{Random paths between the root and the leaves} \label{secRTCNRandPaths}

In this section, we study the length of two random paths going from the root
to the leaf set:
\begin{enumerate}
  \enlargethispage{1ex}
  \item A path obtained by starting from the root and going ``down'' towards
    the leaves, choosing each outgoing edge with equal probability whenever
    we reach a tree vertex.
  \item A path obtained by starting from a uniformly chosen
    leaf and going ``up'' towards
    the root, choosing each incoming edge with equal probability whenever we
    reach a reticulation vertex.
\end{enumerate}

\begin{definition} \label{defLengthPath}
The \emph{length} of a path $\gamma$ is its number of tree edges.
\end{definition}

The reason why we do not count reticulation edges
when calculating the length of a path is that,
from a biological point of view,
reticulation edges are supposed to correspond to ``instantaneous''
hybridization events. \revision{As will become apparent from their proof,
our results can be adapted to the case where all edges are
counted. However, this yields a
compound Poisson distribution as the approximation distribution fork
the lengths of the paths (see e.g.\ Lemma~8 in~\cite{boutsikas2000bound}).}

Before starting with the proofs, let us introduce some notation.

\begin{notation}
We denote by
\[
  H^{(m)}_n = \sum_{k = 1}^n \frac{1}{k^m}
\]
the $n$-th generalized harmonic number of order $m$. We also use the notation
{$\eulergamma = \lim_n H_n^{(1)} - \log n$} for the Euler-Mascheroni constant,
\revision{where as in the rest this text ``$\log$'' denotes the natural
logarithm.}
\end{notation}
  
Finally, let us recall a classic bound on the total variation distance
between a sum of independent Bernoulli variables and the
corresponding Poisson distribution.

\begin{quotedtheorem} \label{propRTCNPoissonApprox}
Let $X_1, \ldots, X_n$ be independent Bernoulli variables with parameters
$\Prob{X_i = 1} = p_i$, and let $\lambda_n = \sum_{i = 1}^{n} p_i$. Then,
\[
  d_{\mathrm{TV}}\mleft(\sum_{i = 1}^n X_i,\; \mathrm{Poisson}(\lambda_n)\mright)
  \;\leq\;
  \min(1, 1/\lambda_n) \sum_{i = 1}^n p_i^2 \,, 
\]
where $d_{\mathrm{TV}}$ denotes the total variation distance.
\end{quotedtheorem}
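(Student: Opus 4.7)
The plan is to establish the bound by combining two standard techniques, each responsible for one of the two factors in $\min(1,1/\lambda_n)$. The weaker factor of $1$ follows from an elementary coupling argument (Le Cam's inequality), while the sharper factor of $1/\lambda_n$ requires Stein's method.

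First I would deal with the factor $1$. For each $i$, I construct a joint distribution $(X_i,Y_i)$ on the same probability space with $X_i\sim\mathrm{Bernoulli}(p_i)$ and $Y_i\sim\mathrm{Poisson}(p_i)$ chosen to maximize $\Prob{X_i=Y_i}$. An explicit maximal coupling --- for instance, match $\{X_i=0\}\cap\{Y_i=0\}$ on a set of probability $e^{-p_i}\wedge(1-p_i)=e^{-p_i}$, then match $\{X_i=1\}\cap\{Y_i\ge 1\}$ on a set of probability $p_i$ minus the appropriate overflow --- yields $\Prob{X_i\ne Y_i}\le p_i(1-e^{-p_i})\le p_i^{2}$ by the inequality $1-e^{-p}\le p$. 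Taking the $(X_i,Y_i)$ independent across $i$, the sum $S^\star=\sum_i Y_i$ is exactly $\mathrm{Poisson}(\lambda_n)$ by additivity. The coupling characterization of total variation and a union bound then give
\[
 d_{\mathrm{TV}}\!\Bigl(\sum_i X_i,\,\mathrm{Poisson}(\lambda_n)\Bigr)\;\le\;\Prob{\textstyle\sum_i X_i\neq S^\star}\;\le\;\sum_i \Prob{X_i\ne Y_i}\;\le\;\sum_i p_i^{2}.
\]

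Next I would sharpen this to the factor $1/\lambda_n$ via Stein's method. For a bounded test function $h:\mathbb{N}_0\to\mathbb{R}$, introduce the Stein--Chen equation
\[
 \lambda_n\, f(k+1)-k\,f(k)\;=\;h(k)-\pi_{\lambda_n}(h),
\]
where $\pi_{\lambda_n}$ denotes expectation under $\mathrm{Poisson}(\lambda_n)$. The bounded solution $f_h$ admits the classical closed-form series representation, and the key analytic input --- due to Chen and refined by Barbour and Eagleson --- is the sharp bound $\sup_k|f_h(k+1)-f_h(k)|\le\min(1,1/\lambda_n)\,\|h\|_\infty$, which for indicator test functions $h=\mathbf{1}_A$ reduces to $\min(1,1/\lambda_n)$. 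Plugging $S=\sum_i X_i$ into the Stein equation, taking expectations, and decomposing $\Expec{S f(S)}=\sum_i p_i\,\Expec{f(S_i+1)}$ where $S_i=S-X_i$ (using independence of $X_i$ and $S_i$), one obtains
\[
 \bigl|\Expec{h(S)}-\pi_{\lambda_n}(h)\bigr|\;=\;\Bigl|\sum_i p_i\,\Expec{X_i\bigl(f(S_i+1)-f(S)\bigr)}\Bigr|\;\le\;\|\Delta f_h\|_\infty\sum_i p_i^{2}.
\]
Taking the supremum over $h=\mathbf{1}_A$ delivers the stated inequality.

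The main obstacle is the sharp estimate on the Stein solution: obtaining $\|\Delta f_h\|_\infty\le 1/\lambda_n$ (rather than the trivial $1$) requires manipulating the explicit representation of $f_h$ and exploiting a monotonicity/comparison with a factorial moment identity. This analytic step is where all the nontriviality sits; the probabilistic coupling argument alone cannot yield the $1/\lambda_n$ improvement, and this is precisely the reason Stein's method was developed. Since the result is classical (Chen 1975; see also \cite{Barbour1992}), in a paper such as this one it is natural to cite it rather than reprove it, and only the coupling half of the argument would realistically appear in self-contained form.
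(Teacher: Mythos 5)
The paper offers no proof of this statement at all: it is stated as a quoted theorem, with the remark that it ``is a consequence of the Stein--Chen method'' and a pointer to Theorem~4.6 of \cite{RossProbabilitySurveys2011}. Your closing judgement is therefore exactly what the authors do --- cite rather than reprove --- and your sketch reconstructs the canonical argument: Le Cam's coupling for the constant-$1$ version of the bound, and the Stein--Chen equation with the Barbour--Eagleson gradient estimate for the $1/\lambda_n$ improvement. The architecture is sound and matches the method the paper attributes.

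Two local slips should be repaired before your sketch could stand as a proof. First, in the maximal coupling of $\mathrm{Bernoulli}(p_i)$ with $\mathrm{Poisson}(p_i)$, the overlap at $0$ is $\min(1-p_i,\,e^{-p_i}) = 1-p_i$, not $e^{-p_i}$, since $e^{-p}\ge 1-p$ on $[0,1]$; the total overlap is $(1-p_i)+p_i e^{-p_i}$, so $\Prob{X_i\ne Y_i}=p_i(1-e^{-p_i})\le p_i^2$ as you state --- the conclusion is fine, but the intermediate minimum is flipped. Second, your displayed Stein identity is vacuous as written: on the event $\{X_i=1\}$ one has $S=S_i+1$, so $X_i\bigl(f(S_i+1)-f(S)\bigr)\equiv 0$. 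The correct decomposition, using $\lambda_n\,\Expec{f(S+1)}=\sum_i p_i\,\Expec{f(S_i+X_i+1)}$ and $\Expec{X_i f(S)}=p_i\,\Expec{f(S_i+1)}$, is
\[
  \Expec{\lambda_n f(S+1) - S f(S)}
  \;=\; \sum_{i=1}^n p_i\, \Expec{f(S+1)-f(S_i+1)}
  \;=\; \sum_{i=1}^n p_i\, \Expec{X_i\bigl(f(S_i+2)-f(S_i+1)\bigr)}\,,
\]
whose modulus is at most $\sup_k|f(k+1)-f(k)|\cdot\sum_i p_i^2$ by independence of $X_i$ and $S_i$. Finally, for indicator test functions the sharp estimate is $\sup_k|f_A(k+1)-f_A(k)|\le \lambda_n^{-1}(1-e^{-\lambda_n})$, which indeed implies the factor $\min(1,1/\lambda_n)$; as you correctly identify, this analytic bound on the Stein solution is the only genuinely nontrivial ingredient, and citing it (as the paper does via Ross's survey) rather than rederiving it is standard practice.
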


This inequality is a consequence of the Stein-Chen method and can be found,
for example, as Theorem~4.6 in \cite{RossProbabilitySurveys2011}.

\subsection{Length of a random walk from the root to a leaf} \label{secRTCNLenghtDown}

In this section, we prove the first part of 
what was announced as Theorem~\ref{thmRTCNRandPaths} in the introduction.

\begin{theorem}[Point (i)]  \label{thmRTCNRandPaths}
Let $\nu$ be a uniform RTCN with $\ell$ leaves, and
let $\gamma^{\downarrow}$ be a random path obtained by starting from the
root and following the edges of $\nu$, choosing
each of the two out-going edges of a tree vertex with equal probability and
stopping when we reach a leaf. Then,
\[
  \mathrm{length}(\gamma^{\downarrow}) \;=\;
  \sum_{k = 1}^{\ell - 1} I_k \,, 
\]
where $I_1, \ldots, I_{\ell - 1}$ are independent Bernoulli variables with
parameter 
\[
  \Prob{I_k = 1} = \frac{2k - 1}{k^2}\,.
\]
In particular, letting $c^{_\downarrow} = 2 \eulergamma - \pi^2/6$,
where $\eulergamma$ is the Euler-Mascheroni constant,
\begin{mathlist}
\item $\Expec{\mathrm{length}(\gamma^{\downarrow})} =
  2\log \ell + c^{_\downarrow} + o(1)$.
\item $\Var{\mathrm{length}(\gamma^{\downarrow})} =
  2\log \ell + O(1)$.
\item $d_{\mathrm{TV}}\big(\mathrm{length}(\gamma^{\downarrow}),\,
  \mathrm{Poisson}(2\log \ell + c^{_\downarrow})\big) \to 0$ as $\ell \rightarrow \infty$.
\end{mathlist}
\end{theorem}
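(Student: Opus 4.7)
The plan is to use the forward-time construction of Proposition~\ref{propForwardAlgoUnif} (or equivalently the subexcedant encoding of Section~\ref{secRTCNOriented}) to decompose $\mathrm{length}(\gamma^{\downarrow})$ into a sum of independent Bernoulli indicators, one per chronological event; parts (i)--(iii) then follow from standard sum-of-independent-Bernoullis calculations together with the Stein--Chen bound recalled as Theorem~\ref{propRTCNPoissonApprox}.

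The first step would be to prove the combinatorial identity
\[
  \mathrm{length}(\gamma^{\downarrow}) \;=\; \sum_{k=1}^{\ell-1} I_k\,,
\]
where $I_k$ is the indicator that the walk ``passes through'' event~$k$, meaning that the walk's current lineage just before event~$k$ is one of the lineages involved in that event. The length equals the number of edges of $\gamma^{\downarrow}$ ending at a tree vertex or a leaf, which equals the number of tree vertices visited along the walk plus one for the final leaf. The walk can visit a tree vertex of event~$k$ only when its current lineage participates in that event, in which case it visits exactly one such tree vertex (the unique one if the event is a branching; one of the two parent tree vertices if it is a reticulation, after which the walk may also cross to the reticulation vertex---not a tree vertex, and reached via a reticulation edge, so contributing nothing to length). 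Combining this with $I_1 = 1$ almost surely (the walk always starts at the root, the sole vertex of event~$1$, which is itself not a tree vertex in the sense of Definition~\ref{defBinRetic}) yields $\mathrm{length}(\gamma^{\downarrow}) = \sum_{k \geq 2} I_k + 1 = \sum_{k \geq 1} I_k$.

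The second step is a coupling. Encode the forward-time construction by i.i.d.\ uniforms: at step~$k$ draw $s^{\circ}_k, s^{\bullet}_k$ independently and uniformly on $\{1, \ldots, k\}$---event~$k$ is a branching of lineage $s^{\circ}_k$ if $s^{\circ}_k = s^{\bullet}_k$, and otherwise a reticulation of the ordered pair $(s^{\circ}_k, s^{\bullet}_k)$---and draw independent Bernoulli$(1/2)$ variables $W_k$ encoding the walk's random choice at event~$k$'s tree vertex (used only when the walk is involved). Writing $L_k$ for the walk's lineage just before event~$k$, the random variable $L_k$ is measurable with respect to $\mathcal{F}_{k-1} = \sigma\big((s^{\circ}_j, s^{\bullet}_j, W_j)_{j < k}\big)$, while $(s^{\circ}_k, s^{\bullet}_k)$ is independent of $\mathcal{F}_{k-1}$. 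Since $I_k = \Indic{L_k \in \{s^{\circ}_k, s^{\bullet}_k\}}$, inclusion--exclusion applied to the uniform law of $(s^{\circ}_k, s^{\bullet}_k)$ yields
\[
  \Prob{I_k = 1 \given \mathcal{F}_{k-1}} \;=\; \tfrac{1}{k} + \tfrac{1}{k} - \tfrac{1}{k^2} \;=\; \tfrac{2k - 1}{k^2}\,,
\]
which depends on neither $L_k$ nor the remainder of the past. This simultaneously gives the claimed marginal law of $I_k$ and its independence from $\mathcal{F}_{k-1}$---in particular from $(I_1, \ldots, I_{k-1})$---and induction yields joint independence.

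Items (i)--(iii) are then routine. For (i), $\Expec{\mathrm{length}(\gamma^{\downarrow})} = 2 H_{\ell-1} - H^{(2)}_{\ell-1}$; the expansions $H_{\ell-1} = \log \ell + \eulergamma + O(1/\ell)$ and $H^{(2)}_{\ell-1} = \pi^2/6 + O(1/\ell)$ identify $c^{_\downarrow} = 2\eulergamma - \pi^2/6$. For (ii), $\Var{\mathrm{length}(\gamma^{\downarrow})} = \sum_k p_k(1 - p_k) = 2 H_{\ell-1} + O(1) = 2 \log \ell + O(1)$, using $p_k = 2/k - 1/k^2$. For (iii), Theorem~\ref{propRTCNPoissonApprox} bounds the total variation distance to $\mathrm{Poisson}(\lambda_\ell)$ (with $\lambda_\ell = \sum_k p_k$) by $\min(1, 1/\lambda_\ell)\sum_k p_k^2 = O(1/\log \ell)$, and a triangle inequality using the elementary estimate $d_{\mathrm{TV}}(\mathrm{Poisson}(\lambda_1), \mathrm{Poisson}(\lambda_2)) \leq |\lambda_1 - \lambda_2|$ absorbs the $o(1)$ discrepancy between $\lambda_\ell$ and $2 \log \ell + c^{_\downarrow}$. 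The main obstacle in this plan is the combinatorial first step---establishing $\mathrm{length}(\gamma^{\downarrow}) = \sum_k I_k$---which requires careful bookkeeping of tree vs.\ reticulation edges and of the root's special role; once this identity is in hand, the independence follows essentially for free from the freshness of the uniform draws $(s^{\circ}_k, s^{\bullet}_k)$ at each step.
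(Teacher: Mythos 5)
Your proposal is correct and follows essentially the same route as the paper: a joint forward-time construction of the growing RTCN and the path, the decomposition $\mathrm{length}(\gamma^{\downarrow}) = \sum_k I_k$ with $I_k$ the indicator that the walk's current lineage is hit by the $k$-th event (probability $(2k-1)/k^2$ since the ordered pair of lineages at step $k$ is uniform and independent of the past), and the Stein--Chen bound for (iii). If anything, you supply details the paper leaves implicit -- the bookkeeping showing that the number of tree edges equals the number of participating events (with the root's event contributing via the final leaf edge), the filtration argument for joint independence, and the triangle-inequality adjustment $d_{\mathrm{TV}}(\mathrm{Poisson}(\lambda_1), \mathrm{Poisson}(\lambda_2)) \leq \lvert\lambda_1 - \lambda_2\rvert$ to pass from $\lambda_\ell$ to $2\log\ell + c^{_\downarrow}$ -- so the argument is sound as written.
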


\begin{proof}
The idea of the proof is to use the forward-time construction to build
jointly a nested sequence $(\nu_\ell^\circ)_{\ell \geq 2}$ of uniform
decorated RTCNs and the random path $\gamma^\downarrow$.
With the convention that $\nu_1^\circ$ consists of a single lineage,
for $k \geq 2$ let $(u_k, v_k)$ denote the pair of lineages that was
chosen to turn $\nu^\circ_{k - 1}$
into $\nu^\circ_{k}$ (recall that if $u_k = v_k$ then the next event is a
branching) and let $x_k$ record the position of the random walk among the
leaves of $\nu^\circ_{k - 1}$. With this notation, the length of
$\gamma^\downarrow$ in $\nu^\circ_\ell$ is
\[
  \mathrm{length}(\gamma^{\downarrow}) \;=\;
  \sum_{k = 1}^{\ell - 1} \Indic{x_{k} \in \{u_k, v_k\}}\,, 
\]
where the variables $\Indic{x_{k} \in \{u_k, v_k\}}$ are independent
because $(x_{k-1}, x_k)$ is independent of $(u_k, v_k)$.
Moreover, since $(u_k, v_k)$ is chosen uniformly among the pairs of lineages
of $\nu^\circ_{k - 1}$ and independently of $x_k$,
\[
  \Prob{x_{k} \in \{u_k, v_k\}} = \frac{2k - 1}{k^2}\,, 
\]
which proves the first part of the proposition.

The rest of the proposition follows immediately from
Theorem~\ref{propRTCNPoissonApprox} since, letting $p_k = (2k - 1)/k^2$,
\begin{itemize}
  \item $\Expec{\mathrm{length}(\gamma^\downarrow)} =
    \sum_{k = 1}^{\ell - 1} p_k =
    2 H^{(1)}_{\ell - 1} - H^{(2)}_{\ell - 1}$.
  \item $\Var{\mathrm{length}(\gamma^\downarrow)} =
    \sum_{k = 1}^{\ell - 1} p_k (1 - p_k) =
    2 H^{(1)}_{\ell - 1} - 5 H^{(2)}_{\ell - 1} + 4
    H^{(3)}_{\ell - 1} - H^{(4)}_{\ell - 1}$.
  \item $\sum_{k = 1}^{\ell - 1} p_k^2 =
    4 H^{(2)}_{\ell - 1} - 4 H^{(3)}_{\ell - 1} + H^{(4)}_{\ell - 1} = O(1)$ .\qedhere
\end{itemize}
\end{proof}

\subsection{Length of a random walk from a leaf to the root} \label{secRTCNLengthUp}

In this section, we prove the second part of Theorem~\ref{thmRTCNRandPaths}.

\begin{reptheorem}{thmRTCNRandPaths}[Point (ii)]
Let $\nu$ be a uniform RTCN with $\ell$ leaves, and
let $\gamma^{\uparrow}$ be a random path obtained by starting from a uniformly
chosen leaf and following the edges of $\nu$ in reverse direction, choosing
each of the two incoming edges of a reticulation vertex with equal probability
and stopping when we reach the root. Then,
\[
  \mathrm{length}(\gamma^{\uparrow}) \;=\;
  \sum_{k = 2}^{\ell} J_k \,, 
\]
where $J_2, \ldots, J_{\ell}$ are independent Bernoulli variables with
parameter 
\[
  \Prob{J_k = 1} = \frac{3k - 4}{k(k - 1)}\,.
\]
In particular, letting $c^{_\uparrow} = 3 \eulergamma - 4$, where
$\eulergamma$ is the Euler-Mascheroni constant,
\begin{mathlist} \label{propRTCNUpPath}
\item $\Expec{\mathrm{length}(\gamma^{\uparrow})} = 3 \log \ell
  + c^{_\uparrow} + o(1)$.
\item $\Var{\mathrm{length}(\gamma^{\uparrow})} = 3\log \ell + O(1)$.
\item $d_{\mathrm{TV}}\big(\mathrm{length}(\gamma^{\uparrow}),\,
  \mathrm{Poisson}(3\log \ell + c^{_\uparrow})\big) \to 0$.
\end{mathlist}
\end{reptheorem}

\begin{remark}
Note that the random path $\gamma^\uparrow$ is not uniformly chosen
among all the paths going from the focal leaf to the root.
\end{remark}

\begin{proof}
The proof is similar to that of the previous section, but
this time the idea is to use the backward-time construction to jointly
build the RTCN $\nu$ and the random path $\gamma^\uparrow$.
Recall that, in the backward-time construction, for $k = \ell$ down to $2$,
we go from $k$ to $k - 1$ lineages by choosing an event uniformly
at random among the $k(k - 1)/2$ possible coalescences and $k(k - 1)(k - 2)/2$
possible reticulations. Out of these, $k - 1$ coalescences and
$3(k - 1)(k - 2)/2$ reticulations involve the lineage through which
$\gamma^\uparrow$ goes, and the choice is independent of the position of
$\gamma^\uparrow$. As a result, the probability that the lineage
containing~$\gamma^\uparrow$ is involved in the event that is chosen is
\[
  \frac{k - 1 + 3(k - 1)(k - 2) / 2}{k(k - 1) / 2 + k(k - 1)(k - 2)/2}
  \;=\;
  \frac{3k - 4}{k(k - 1)} \,, 
\]
proving the first part of the proposition.  The rest of the proposition then
follows from Theorem~\ref{propRTCNPoissonApprox} and from the fact that,
letting $p_k = \frac{3k - 4}{k(k - 1)}$,
\begin{itemize}
  \item $\Expec{\mathrm{length}(\gamma^{\uparrow})} =
    \sum_{k = 2}^{\ell} p_k =
    3 H^{(1)}_{\ell} - 4 + 3/\ell$.
  \item $\Var{\mathrm{length}(\gamma^{\uparrow})} =
    \sum_{k = 2}^{\ell} p_k (1 - p_k) =
    3 H^{(1)}_\ell + 20 - 17 H^{(2)}_{\ell} - 7/\ell + 1/\ell^2$.
  \item $\sum_{k = 2}^{\ell} p_k^2 =
    -17 H^{(2)}_\ell + 24 - 10/\ell + 1/\ell^2 = O(1)$. \qedhere
\end{itemize}
\end{proof}

\subsection{An alternative proof of Theorem~\ref{thmRTCNRandPaths}}

In this section, we give another 
proof of Theorem~\ref{thmRTCNRandPaths}. This proof is
less direct than the previous one, but it provides an alternative intuition as
to where the Poisson distribution, the $\log \ell$ order of magnitude
and the factors 2 and 3 come from.

Because writing down this proof formally would require introducing additional
notation, and because we already have a formal proof, we allow ourselves to
present it as a heuristic. In what follows, the symbol
``$\approx$'' will be used loosely to indicate that two quantities are
approximately equal.

Let us start with $\gamma^\downarrow$. 
Slowing-down time in Proposition~\ref{propForwardAlgoUnif},
consider the uniform decorated RTCN with $\ell$ leaves $\nu^\circ$ obtained by:
\begin{enumerate}
  \item Starting from one lineage. 
  \item Conditional on there being $k$ lineages, letting:
    \begin{itemize}
      \item each lineage branch at rate~$1/k$;
      \item each ordered pair of lineages hybridize at rate~$1/k$.
    \end{itemize}
  \item Stopping upon reaching $\ell$ lineages.
\end{enumerate}
Note that in this construction a branching event is viewed as
the production of a new particle by another, rather than as the splitting
of a particle into two new particles. Thus, we can consider
the path $\widetilde{\gamma}^\downarrow$
obtained by always
following the edge corresponding to the lineage of the first particle,
as illustrated in Figure~\ref{figRTCNRandPathGammaDown}.

\begin{figure}[h!]
  \centering
  \captionsetup{width=0.5\linewidth}
  \includegraphics[width=0.5\linewidth]{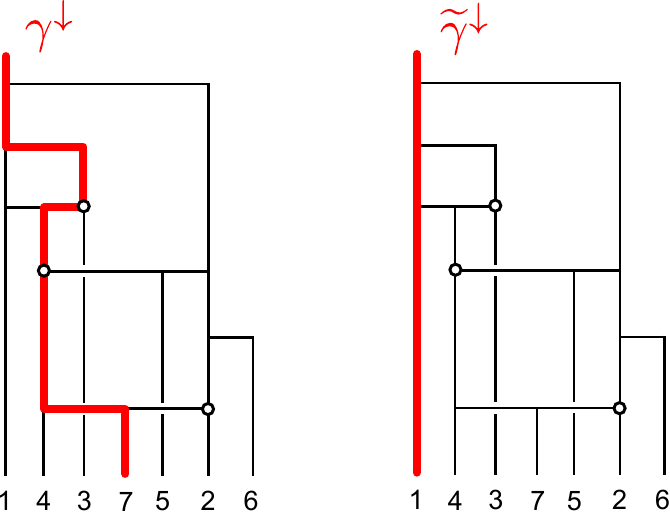} 
  \caption{On the left, the path $\gamma^\downarrow$ and on the right
  the path $\tilde{\gamma}^\downarrow$ corresponding to the lineage of the
  first particle.}  
\label{figRTCNRandPathGammaDown}
\end{figure}

From the forward-time
joint construction of $\nu^\circ$ and $\gamma^\downarrow$, we
see that the distribution of $\gamma^\downarrow$ does not depend on which
lineage it chooses to follow, as long as this choice only depends on the
past of the process. As a result,
\[
  \mathrm{length}(\gamma^\downarrow) \;\overset{d}{=}\;
  \mathrm{length}(\widetilde{\gamma}^\downarrow) \, .
\]
Now, $\mathrm{length}(\widetilde{\gamma}^\downarrow)$ is simply the number
of events affecting the lineage of the first particle, and
the rate at which each given lineage is affected by events is
$(k-1)\frac{2}{k} + \frac{1}{k} \approx 2$. Therefore, conditional on
the time $T$ it takes for the process to reach $\ell$ lineages,
\[
  \mathrm{length}(\widetilde{\gamma}^\downarrow) \approx
  \mathrm{Poisson}(2T)\,.
\]
Finally, the total number of lineages increases by $1$ at rate
$k \cdot \frac{1}{k} + k (k - 1) \cdot \frac{1}{k} = k$ and therefore
follows a Yule process $(Y(t),\, t \geq 0)$.
Since as $t \to \infty$, $Y(t) e^{-t} \to W$ almost surely, where $W$ is
a random variable (namely, an exponential variable with parameter~1),
we see that the random time $T$ it takes for the process to reach $\ell$
lineages is asymptotically
\[
  T \approx \log\ell - \log W \,.
\]
Putting the pieces together, we see that
$\mathrm{length}(\gamma^\downarrow) \approx \mathrm{Poisson}(2 \log \ell)$.

Let us now give a similar, forward-in-time construction of $\gamma^\uparrow$
where it can be identified with the lineage of the first particle. For this,
we need to ``straighten'' $\gamma^\uparrow$ thanks to a set of deterministic
rules telling us how to fix each bend, as illustrated in
Figure~\ref{figRTCNStraightenGamma}.
This yields a
$\widetilde{\nu}^\circ = f(\nu^\circ, \gamma^\uparrow)$ in which
$\widetilde{\gamma}^\uparrow$, the image of $\gamma^\uparrow$, is the lineage
of the first particle.

\begin{figure}[h!]
  \centering
  \captionsetup{width=0.95\linewidth}
  \includegraphics[width=1\linewidth]{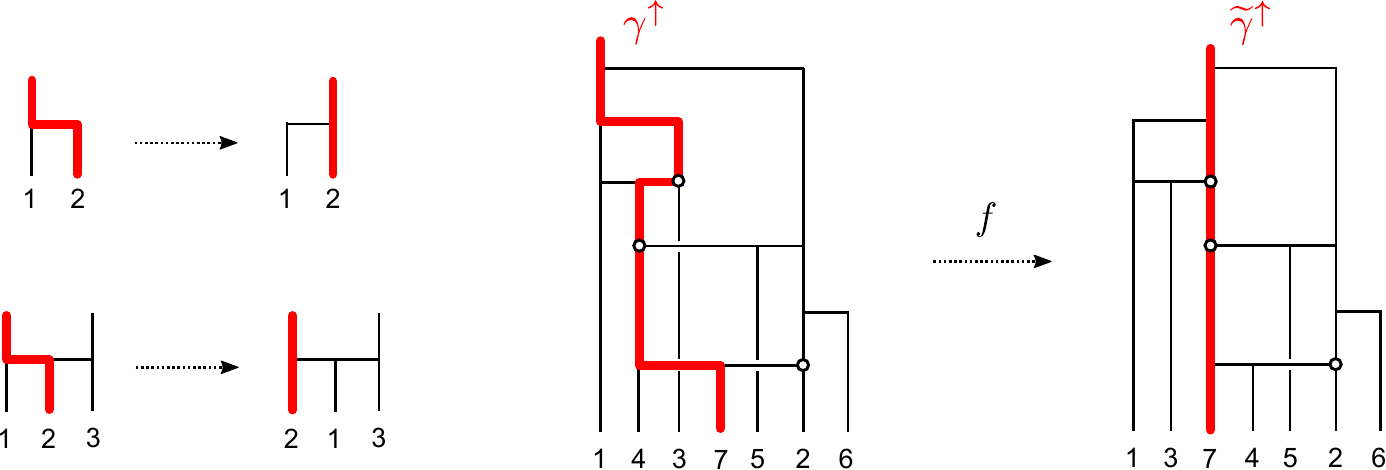}
  \caption{Illustration of the deterministic procedure used to
  ``straighten'' the path $\gamma^\uparrow$ in order to make it coincide
  with the lineage of the first particle. Left: the local modifications that
  are made each time a branching or a reticulation is encountered; these
  essentially consist of swapping lineages. Right: an example of application
  of the procedure to a RTCN.}  
\label{figRTCNStraightenGamma}
\end{figure}

Having done this,
$\mathrm{length}(\gamma^\uparrow) = \mathrm{length}(\widetilde{\gamma}^\uparrow)$
and, conditional on $\widetilde{\nu}^\circ$, the length
of~$\widetilde{\gamma}^\uparrow$ is the number of events affecting the lineage
of the first particle. 

Now, $\widetilde{\nu}^\circ$ has the same distribution as the
decorated RTCN $\hat{\nu}^\circ$ generated by the following continuous-time
Markov chain:
\begin{itemize}
  \item Each lineage branches at rate $1/k$, except for that of the first
    particle, which branches at rate $2/k$.
  \item Each ordered pair of lineages hybridizes at rate $1/k$, except
    for pairs involving the lineage of the first particle, which hybridize
    at rate $3/(2k)$.
\end{itemize}
This equality in distribution is proved by writing down explicitly the laws
$\widetilde{\nu}^\circ$ and~$\hat{\nu}^\circ$, something that requires introducing
notation to give a formal description of~$\widetilde{\nu}^\circ$.
However, to see where the factors $2$ and $3/2$ for the lineage of the
first particle come from in the construction of
$\hat{\nu}^\circ$, it suffices to note that
when going from $k - 1$ to $k$ lineages
the probability that the next event involves the lineage of the first
particle~is
\[
  \frac{3(k - 2) + 2}{3(k - 2) + 2 + (k - 2)(k - 3) + (k - 2)} \;=\;
  \frac{3k - 4}{k(k - 1)} \,, 
\]
and so is indeed the same as the probability the lineage containing
$\gamma^\uparrow$ is involved when going from $k$ to $k - 1$ lineages in the
joint backward-time construction of $\nu^\circ$ and $\gamma^\uparrow$ given in
Section~\ref{secRTCNLengthUp}.

In the construction of $\hat{\nu}^\circ$, 
conditional on there being $k$ lineages, events affect
the lineage of the first particle at rate
$2(k - 1) \cdot \frac{3}{2k} + \frac{2}{k} \approx 3$
so that letting $\hat{T}$ denote the random time it takes for the
process to reach $\ell$ lineages, $\mathrm{length}(\gamma^\uparrow) \approx
\mathrm{Poisson}(3 \hat{T})$. Finally, since
the total number of lineages increases by 1 at rate
$k + 1$ when there are $k$ lineages, the total number of lineages is distributed
as $\hat{Y}(t) - 1$, where $(\hat{Y}(t),\, t \geq 0)$ is a Yule
process started from $2$, so that $\hat{Y}(t)e^{-t} \to W + W'$, where
$W$ and $W'$ are independent exponential variables with parameter 1. Therefore,
$\hat{T} \approx \log \ell$ and we recover
$\mathrm{length}(\gamma^\uparrow)\approx\mathrm{Poisson}(3\log\ell)$.


\section{Number of lineages in the ancestry of a leaf} \label{secRTCNAncestryLeaf}

Let us start by giving a formal definition of the process
counting the number of lineages in the ancestry of a leaf that was
described in Section~\ref{secRTCNMainResults}.

\subsection{%
\texorpdfstring{Definition and characterization of $X^{(\ell)}$}%
{Definition and characterization of X}%
} \label{appRTCNVariance}

\begin{definition}
The \emph{ancestry} of a vertex $u$ of a RTCN $\nu$ is the subgraph
$\Restriction{\nu}{u}$ consisting of all paths going from the root of
$\nu$ to $u$.
\end{definition}

\begin{definition} \label{defLineages}
The \emph{number of lineages} of a subgraph $\mu$ of a RTCN $\nu$
is the process $(X_k, \, 0 \leq k \leq \ell - 2)$ defined by
\[
  X_k \;=\;
  \#\Set*[\big]{e \in \mu \suchthat
    \text{the edge }e \text{ is alive between $U_k$ and $U_{k + 1}$}} \, , 
\]
where $U_1 \succ \cdots \succ U_{\ell - 1}$ are the events of $\nu$,
taken in inverse chronological order, 
with the same convention as
in Definition~\ref{defAliveEdges} of alive edges that $U_0 = \partial V$.
\end{definition}

In the rest of this section, we study the number of lineages
in the ancestry of a uniformly chosen leaf of a uniform RTCN with $\ell$ labeled
leaves, and denote it by~$X^{(\ell)}$. 
See Figure~\ref{figRTCNDefAncestry} in Section~\ref{secRTCNMainResults} for an
illustration.  We also study the embedded process $\tilde{X}^{(\ell)}$ defined
by $\tilde{X}_i = X_{k_i}$, where $k_0 = 0$ and, for~$i \geq 1$,
$k_i = \inf\Set*{k > k_{i - 1} \suchthat X_k \neq X_{k_{i - 1}}}$.

Let us start by characterizing the law of $X^{(\ell)}$.

\begin{proposition}
The process $X_k^{(\ell)}$ is the Markov chain characterized by
$X^{(\ell)}_0 = 1$ and the transition probabilities:
\begin{itemize}
  \item $\displaystyle\Prob{X^{(\ell)}_{k + 1} = x + 1 \given X^{(\ell)}_k = x} =
    \frac{x(\ell - k - x)(\ell - k - x - 1)}{(\ell - k)(\ell - k - 1)^2}$
  \item $\displaystyle\Prob{X^{(\ell)}_{k + 1} = x - 1 \given X^{(\ell)}_k = x} =
    \frac{x(x - 1)^2}{(\ell - k)(\ell - k - 1)^2}$\\
  \item $\displaystyle\Prob{X^{(\ell)}_{k + 1} = x \given X^{(\ell)}_k = x} =
    1 -\Prob{X_{k + 1} = x \pm 1 \given X^{(\ell)}_k = x}$
\end{itemize}
\end{proposition}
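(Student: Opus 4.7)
The plan is to couple the ancestry process $X^{(\ell)}$ with the backward-time construction of a uniform RTCN given in Proposition~\ref{corUniformRTCNBackward}: start from $\ell$ labeled lineages, pick a focal leaf $L$ uniformly at random among $\{1, \ldots, \ell\}$, and at each step choose an event uniformly among the $\binom{n}{2}$ possible coalescences and the $n\binom{n-1}{2}$ possible reticulations of the current $n$ lineages. Let $A_k$ denote the set of ancestry edges alive in strip $k$ (between $U_k$ and $U_{k+1}$), so that $X^{(\ell)}_k = |A_k|$ and $A_0 = \{L\}$.

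The key step is an exchangeability claim: conditional on $(X^{(\ell)}_0, \ldots, X^{(\ell)}_k)$, the set $A_k$ is uniformly distributed among the $\binom{\ell - k}{X^{(\ell)}_k}$ subsets of the $\ell - k$ lineages alive in strip $k$. This I would prove by induction on $k$, the base case $k = 0$ being immediate from the uniform choice of $L$. The induction step uses that the backward construction chooses its next event purely at the level of which labeled lineages are involved, without reference to the ancestry, so its law is invariant under any permutation of the strip-$k$ lineages. Once this exchangeability is granted, the Markov property of $X^{(\ell)}$ follows, and its transition probabilities reduce to a counting problem.

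For the counting, I would go through the effect of each kind of event on $X$. Backward in time, a coalescence fuses two lineages $\alpha, \beta$ into one, and the fused lineage belongs to the ancestry iff at least one of $\alpha, \beta$ did; hence $X$ drops by $1$ only when both $\alpha, \beta \in A_k$, and is unchanged otherwise. A reticulation replaces three lineages (two tree-children $u, v$ and a hybrid child $h$) by two parent-lineages $u', v'$, each of which belongs to the ancestry iff the corresponding tree-child is in $A_k$ or $h$ is. A short case analysis then shows that $X$ goes up by $1$ exactly when $h \in A_k$ and $u, v \notin A_k$, drops by $1$ exactly when $u, v, h$ are all in $A_k$, and is unchanged otherwise. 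By exchangeability, the number of favourable events for a $+1$ jump is $x \binom{n-x}{2}$ and for a $-1$ jump it is $\binom{x}{2} + x \binom{x-1}{2} = \tfrac{1}{2} x(x-1)^2$; dividing by the total $\tfrac{1}{2} n(n-1)^2 = \tfrac{1}{2}(\ell-k)(\ell-k-1)^2$ of possible events recovers exactly the stated probabilities.

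The main obstacle is the rigorous formulation of the exchangeability step: one has to check that the \textrm{coal} and \textrm{ret} operations on the encoding $(P_k)$ of Section~\ref{secRTCNBackwardInTimeEncoding} commute with the relabelings of strip-$k$ lineages that respect the ancestry/non-ancestry partition, so that conditional on $(X^{(\ell)}_0, \ldots, X^{(\ell)}_{k+1})$, the new ancestry set $A_{k+1}$ remains uniform over subsets of the correct size. This is a routine but notation-heavy bookkeeping exercise that I would carry out by explicitly describing how ancestry labels are inherited through each application of coal and ret.
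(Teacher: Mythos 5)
Your proposal is correct and follows essentially the same route as the paper: couple $X^{(\ell)}$ with the backward-time construction, note that the $(k+1)$-th event is uniform among the $\tfrac{1}{2}(\ell-k)(\ell-k-1)^2$ possibilities, and count which events move the number of marked (ancestry) lineages — your counts $x\binom{n-x}{2}$ for the $+1$ jump and $\binom{x}{2}+x\binom{x-1}{2}=\tfrac{1}{2}x(x-1)^2$ for the $-1$ jump match the paper's exactly. The only difference is that the exchangeability lemma you flag as the main obstacle is unnecessary: since each backward step is chosen uniformly and independently of the entire past, the conditional law of the next jump given the full history (including which lineages are marked) already depends only on $|A_k|$ and $\ell-k$, which yields both the Markov property and the stated transition probabilities without ever determining the conditional distribution of the set $A_k$ itself.
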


\begin{proof}
The proof relies on the
backward construction of a uniform RTCN and a bit of book-keeping to see
how the $(k + 1)$-th event, which takes us from $\ell - k$ lineages to
$\ell - k - 1$ lineages, affects the number of lineages in the ancestry of
a leaf. Recall that in the backward construction there are
$(\ell - k)(\ell - k - 1)^2/2$ possibilities for
the $(k + 1)$-th event. Let us refer to the lineages in the ancestry of
the focal leaf as marked lineages.  Conditional on $X^{(\ell)}_k = x$, there
are $x$ marked lineages and $\ell - k - x$ unmarked lineages and thus there
are:
\begin{itemize}
  \item $x(x - 1)/2$ possible coalescences between marked lineages. These
    decrease the number of lineages by 1.
  \item $x(x - 1)(x - 2) / 2$ reticulations involving only marked lineages. These
    also decrease the number of lineages by 1.
  \item $x (\ell - k - x)(\ell - k - x - 1)/2$ possible reticulations where
    the hybrid is a marked lineage and the other two lineages are
    unmarked. These increase the number of marked lineages by 1.
\end{itemize}
Other types of events
(coalescences between two unmarked lineages, coalescences between
a marked and an unmarked lineage, etc...) leave the number of marked
lineages unchanged, as illustrated in Figure~\ref{figRTCNProofAncestry01}.
Since the event is chosen uniformly among all possibilities, this
concludes the proof.
\end{proof}

\begin{figure}[h!]
  \centering
  \captionsetup{width=0.9\linewidth}
  \includegraphics[width=0.85\linewidth]{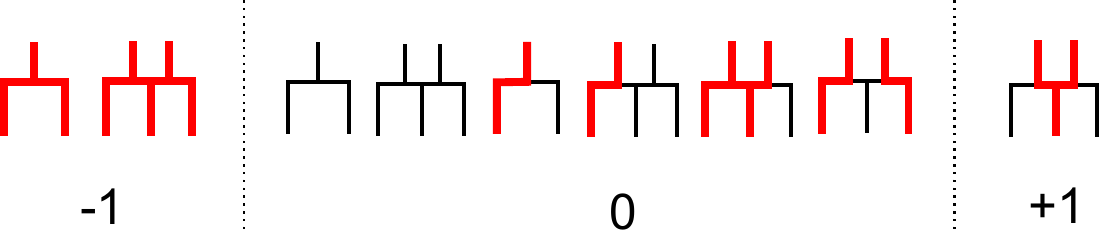}
  \caption{List of all the possible types of events and their effect on
  the number of marked lineages.}  
\label{figRTCNProofAncestry01}
\end{figure}

\subsection{Simulations} \label{secRTCNSimus}

\enlargethispage{2ex}
In this section, we present simulations supporting the conjectures about
$X^{(\ell)}$ and $\tilde{X}^{(\ell)}$ made in Section~\ref{secRTCNMainResults}
and outline some ideas to approach these conjectures. Let us start by
looking at some individual trajectories of these processes, for increasing
values of $\ell$. As can be seen in
Figure~\ref{figRTCNIndivTrajs}\hyperref[figRTCNIndivTrajs]{.A}, most of the
interesting behavior of $X^{(\ell)}$ seems to happen very close to the root so
that to obtain a non-degenerate scaling-limit, we need to focus on a small
window of time, for instance by considering
$X^{(\ell)}_k$ for 
$k = \lfloor \ell - M\sqrt{\ell}(1 - t)\rfloor$ and
$t \in \ClosedInterval{0, 1}$.

\begin{figure}[h!]
  \centering
  \captionsetup{width=0.85\linewidth}
  \includegraphics[width=0.85\linewidth]{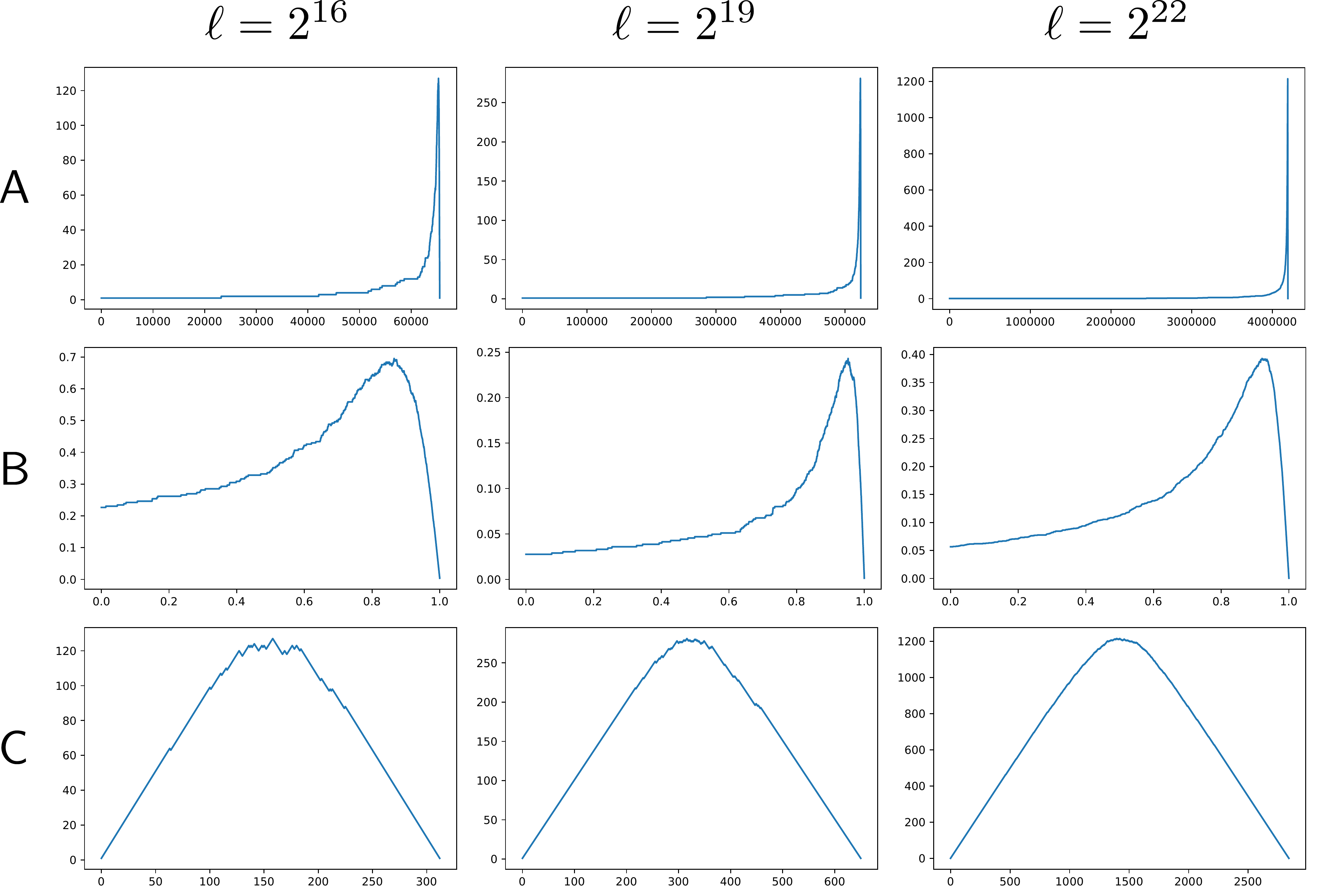}
  \caption{Individual trajectories of the processes described in the main
  text, for different values of $\ell$. $\mathsf{A}$, the process
  $X^{(\ell)}$, $\mathsf{B}$ the process
  $t \mapsto \frac{1}{\sqrt{\ell}} X^{(\ell)}_{\lfloor \ell - M\sqrt{\ell}(1 - t)\rfloor}$
  and $\mathsf{C}$, the process $\tilde{X}^{(\ell)}$.}
\label{figRTCNIndivTrajs}
\end{figure}

Even though the trajectories represented in Figure~\ref{figRTCNIndivTrajs}
seem to become smooth as $\ell \to \infty$, they
do not become deterministic, as made apparent by Figure~\ref{figRTCNHist}, where
the distributions of some statistics of $X^{(\ell)}$ are given. In particular,
these simulations suggest that the relevant scaling limit for
$X^{\ell}$ is indeed 
$\frac{1}{\sqrt{\ell}}
X^{(\ell)}_{\lfloor \ell - M\sqrt{\ell}(1 - t) \rfloor}$, and
support Conjecture~\ref{conjRTCNMain}.

\begin{figure}[h!]
  \centering
  \captionsetup{width=0.85\linewidth}
  \includegraphics[width=0.85\linewidth]{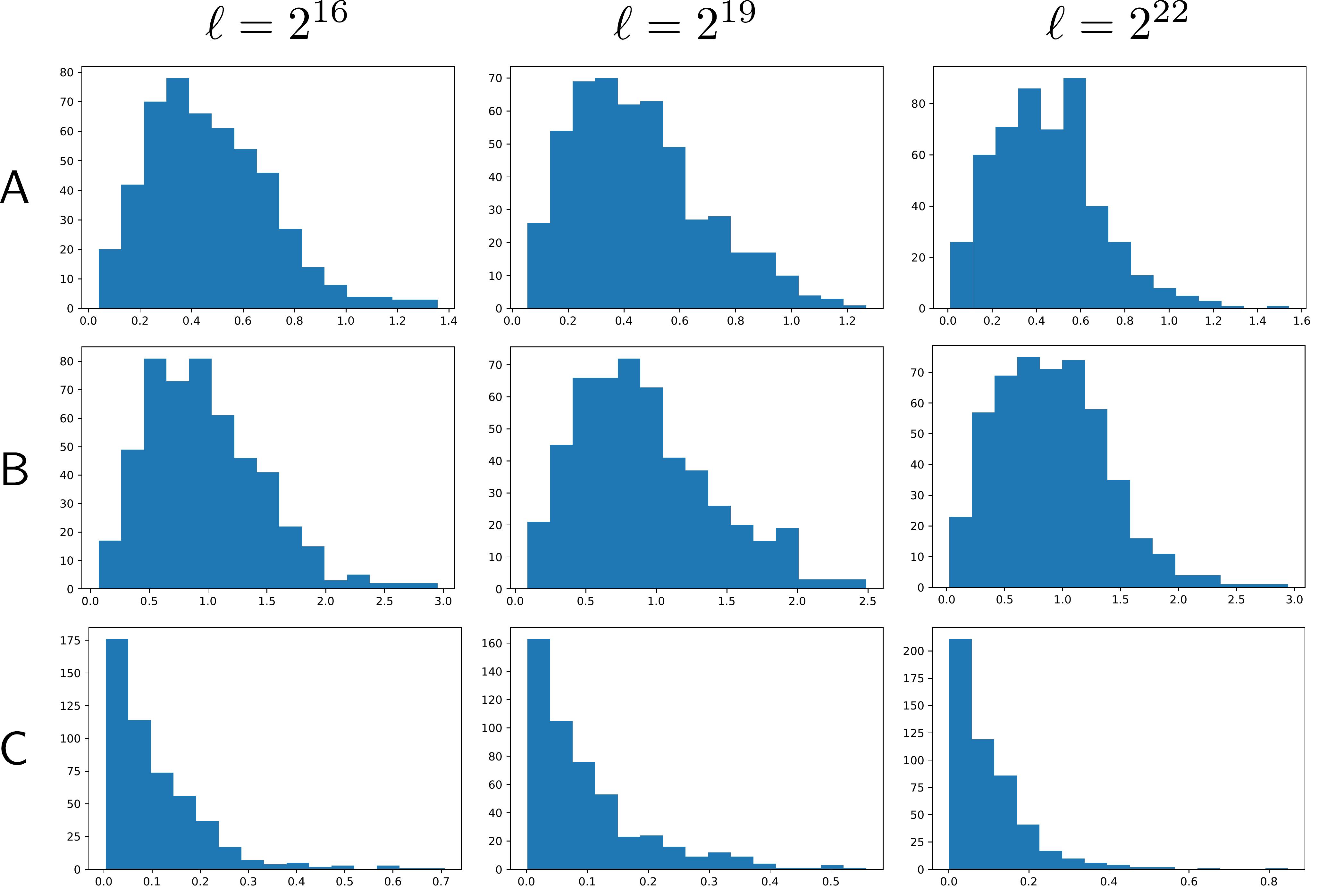}
  \caption{Distribution of some statistics of $X^{(\ell)}$ for $M = 10$
  and 500 trajectories.
  $\mathsf{A}$,
  $\frac{1}{\sqrt{\ell}}
  \max\Set*{ X^{(\ell)}_k \suchthat 0 \leq k \leq \ell - 2}$; $\mathsf{B}$, 
  $\frac{1}{\sqrt{\ell}}\big(\ell - \argmax\Set*{ X^{(\ell)}_k \suchthat 0 \leq k \leq \ell - 2}\big)$ and $\mathsf{C}$, 
  $\frac{1}{\sqrt{\ell}} X^{(\ell)}_{\lfloor \ell - M\sqrt{\ell} \rfloor}$}  
\label{figRTCNHist}
\end{figure}

Our idea to approach the study of the process $X^{(\ell)}$
is to separate it into two phases:
\begin{enumerate}
  \item A slow, stochastic phase, where up to time $k = \lfloor \ell -
    M\sqrt{\ell}\rfloor$ the process $X^{(\ell)}_k$ remains relatively
    small and highly stochastic
  \item A fast, deterministic
    phase, during which the internal dynamics of the
    rescaled process become deterministic, but retain a trace of the
    stochasticity of the first phase in the form of random initial conditions.
\end{enumerate}

\subsection{The stochastic phase}

In this section, to avoid clutter we will sometimes drop the
superscript in $X^{(\ell)}$.

Let us start by considering the process
$Z = (Z_k, 0\leq k \leq \ell - 2)$ characterized
by $Z_0 = 1$, $Z_{k + 1} - Z_k \in \{0, 1\}$ and
\[
  \Prob{Z_{k + 1} = z + 1 \given Z_k = z}
  \;=\; \frac{z}{\ell - k - 1} \,.
\]
Comparing the transition probabilities of $Z$ with those of
$X$, we see that we can couple these two processes in such a way that
$X_k \leq Z_k$ for all $k$. Let us now compute the first
moments of $Z$.
\begin{proposition} \label{propRTCNZk}
~
\begin{mathlist}
\item $\displaystyle\Expec{Z_k} = \frac{\ell}{\ell - k}$
\item $\displaystyle\Expec{Z_k^2} =
  \frac{\ell(\ell + k + 1)}{(\ell - k)(\ell - k + 1)}$
\end{mathlist}  
\end{proposition}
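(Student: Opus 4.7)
The plan is to treat both parts by writing down the one-step conditional moments of $Z_k$ and then solving the resulting scalar recursions. Since $Z_{k+1} - Z_k \in \{0,1\}$ with jump probability $Z_k/(\ell-k-1)$, I get
\[
  \Expec{Z_{k+1} \given Z_k} \;=\; Z_k + \frac{Z_k}{\ell-k-1} \;=\; Z_k\cdot\frac{\ell-k}{\ell-k-1}.
\]
Taking expectations and iterating from $Z_0 = 1$ gives the telescoping product
\[
  \Expec{Z_k} \;=\; \prod_{j=0}^{k-1}\frac{\ell-j}{\ell-j-1} \;=\; \frac{\ell}{\ell-k},
\]
which is (i).

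For (ii), I would compute, again from the one-step transitions,
\[
  \Expec{Z_{k+1}^2 \given Z_k} \;=\; (Z_k+1)^2\,\frac{Z_k}{\ell-k-1} + Z_k^2\mleft(1-\frac{Z_k}{\ell-k-1}\mright),
\]
which simplifies (the cubic terms cancel) to
\[
  \Expec{Z_{k+1}^2 \given Z_k} \;=\; Z_k^2\cdot\frac{\ell-k+1}{\ell-k-1} + \frac{Z_k}{\ell-k-1}.
\]
Taking expectations and substituting the formula for $\Expec{Z_k}$ from (i) produces the linear recursion
\[
  \Expec{Z_{k+1}^2} \;=\; \frac{\ell-k+1}{\ell-k-1}\,\Expec{Z_k^2} + \frac{\ell}{(\ell-k-1)(\ell-k)}.
\]

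Finally, I would verify the closed form $f(k) := \ell(\ell+k+1)/\bigl((\ell-k)(\ell-k+1)\bigr)$ by induction: the base case $f(0) = 1 = \Expec{Z_0^2}$ is immediate, and for the induction step
\[
  \frac{\ell-k+1}{\ell-k-1}\,f(k) + \frac{\ell}{(\ell-k-1)(\ell-k)} \;=\; \frac{\ell(\ell+k+1) + \ell}{(\ell-k-1)(\ell-k)} \;=\; f(k+1),
\]
after cancelling the factor $(\ell-k+1)$ in the first term. There is really no obstacle here; the only place to be slightly careful is the simplification of the conditional second moment, where one must check that the cubic-in-$Z_k$ contributions cancel, leaving a clean affine recursion that the announced formulas satisfy.
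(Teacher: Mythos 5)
Your proof is correct and follows essentially the same route as the paper: the same one-step conditional moment computations yield the same linear recursions for $\Expec{Z_k}$ and $\Expec{Z_k^2}$. The only cosmetic difference is in solving the second recursion, where the paper multiplies through by $(\ell-k)(\ell-k-1)$ to telescope and read off the closed form directly, while you verify it by induction; both are equally valid.
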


\begin{proof}
We have
\[
  \Expec{Z_{k + 1}} = \mleft(1 + \frac{1}{\ell- k - 1}\mright)\,\Expec{Z_k}\,.
\]
Using that $\Expec{Z_0} = 1$, we get
\[
  \Expec{Z_k} \;=\;
  \prod_{i = 0}^{k - 1} \mleft(1 + \frac{1}{\ell- i - 1}\mright)
  \;=\;\frac{\ell}{\ell - k} \, , 
\]
proving (i). For (ii), we note that
\[
  \Expec{Z_{k + 1}^2 \given Z_k} \;=\;
  \frac{Z_k}{\ell - k - 1} + \mleft(\frac{2}{\ell - k - 1} + 1\mright) Z_k^2\, .
\]
Taking expectation and substituting $\Expec{Z_k}$ by $\ell / (\ell - k)$, we
get
\[
  \Expec{Z_{k + 1}^2} \;=\;
  \frac{\ell}{(\ell - k)(\ell - k - 1)} \;+\;
  \frac{\ell - k + 1}{\ell - k - 1} \, 
  \Expec{Z_{k}^2}\,.
\]
Multiplying both sides by $(\ell - k)(\ell - k - 1)$,
\[
  (\ell - k)(\ell - (k + 1))\, \Expec{Z_{k + 1}^2} \;=\;
  \ell \;+\;
  (\ell - (k - 1))(\ell - k) \, 
  \Expec{Z_{k}^2}
\]
and therefore
\[
  \Expec{Z_k^2} \;=\;
  \frac{\ell(\ell + k + 1)}{(\ell - k)(\ell - k + 1)} \, , 
\]
concluding the proof.
\end{proof}

\begin{proposition} \label{propRTCNExpecAncesLineage}
For all $\ell \geq 2$ and $k$ such that $0 \leq k \leq \ell - 2$,
\[
  \frac{\ell}{\ell - k + 1}
  \mleft(1 - \frac{2 k}{(\ell - k)(\ell - k - 1)}\mright) \;\leq\;
  \Expec{X_k} \;\leq\;
  \frac{\ell}{\ell - k}\ .
\]
As a result, for $M > 2$ and all $\ell$ large enough, for all $\epsilon > 0$,
\[
  (1 - \epsilon) \frac{1}{M} \mleft(1 - \frac{2}{M^2}\mright) \;\leq\;
  \Expec{\tfrac{1}{\sqrt{\ell}}
         X_{\lfloor\ell - M \sqrt{\ell}\rfloor}^{(\ell)}} \;\leq\;
  \frac{1}{M} \,, 
\]
so that the sequence of random variables
$\frac{1}{\sqrt{\ell}} X_{\lfloor\ell - M \sqrt{\ell}\rfloor}^{(\ell)}$
is tight and bounded away from 0 in $L^1$.
\end{proposition}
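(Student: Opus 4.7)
The plan is to derive both bounds from a one-step drift analysis of the Markov chain $X^{(\ell)}$, leveraging the coupling $X_k^{(\ell)} \leq Z_k$ already introduced in the preceding text. The upper bound is then immediate: by Proposition~\ref{propRTCNZk}(i), $\Expec{X_k^{(\ell)}} \leq \Expec{Z_k} = \ell/(\ell-k)$. The asymptotic upper bound on $\Expec{\tfrac{1}{\sqrt{\ell}} X_{\lfloor \ell - M\sqrt{\ell}\rfloor}^{(\ell)}}$ follows by substitution.

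For the lower bound, I would use the explicit transition probabilities to compute the one-step drift. Setting $L := \ell - k$, a direct algebraic simplification (in particular, $(L-x)(L-x-1) - (x-1)^2 = L(L-1) - (2L-3)x - 1$) gives
\[
  \Expec{X_{k+1}^{(\ell)} - X_k^{(\ell)} \mid X_k^{(\ell)}}
  \;=\; \frac{X_k^{(\ell)}\bigl[L(L-1) - (2L-3) X_k^{(\ell)} - 1\bigr]}{L(L-1)^2}\,,
\]
and taking expectations yields
\[
  \Expec{X_{k+1}^{(\ell)}}
  \;=\; \tfrac{L}{L-1}\,\Expec{X_k^{(\ell)}}
  \;-\; \tfrac{\Expec{X_k^{(\ell)}} + (2L-3)\Expec{(X_k^{(\ell)})^2}}{L(L-1)^2}\,.
\]
Bounding the quadratic term by $\Expec{(X_k^{(\ell)})^2} \leq \Expec{Z_k^2} = \ell(\ell+k+1) / ((\ell-k)(\ell-k+1))$ via the coupling and Proposition~\ref{propRTCNZk}(ii) converts this identity into a one-sided inequality $\Expec{X_{k+1}^{(\ell)}} \geq \tfrac{L}{L-1}\Expec{X_k^{(\ell)}} - \rho_k$ with an explicit, non-negative $\rho_k$.

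To extract the claimed lower bound, I would rescale by setting $a_k := \Expec{X_k^{(\ell)}} \cdot (\ell-k+1)/\ell$. The leading multiplicative factor $L/(L-1)$ then becomes $L^2/(L^2 - 1) \geq 1$, so the recursion collapses into a telescoping inequality $a_{k+1} \geq a_k - \delta_k$ with $\delta_k = L\rho_k / \ell$. Using the partial-fraction identity $\frac{2L-3}{L(L-1)^2(L+1)} = \frac{2}{L(L-1)(L+1)} - \frac{1}{L(L-1)^2(L+1)}$ together with the telescope $\frac{2}{L(L-1)(L+1)} = \frac{1}{(L-1)L} - \frac{1}{L(L+1)}$, the sum $\sum_{i=0}^{k-1}\delta_i$ is bounded by $2k/((\ell-k)(\ell-k-1))$ up to manifestly lower-order remainders that the extra starting value $a_0 = (\ell+1)/\ell > 1$ absorbs. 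Plugging $k = \lfloor \ell - M\sqrt{\ell}\rfloor$ gives $\ell/(\ell-k+1) \sim 1/M$ and $2k/((\ell-k)(\ell-k-1)) \to 2/M^2$, yielding the second display; tightness in $\R_+$ follows from the $L^1$ upper bound via Markov's inequality, and boundedness away from $0$ in $L^1$ from the lower bound (valid once $M > \sqrt{2}$, hence in particular for $M > 2$).

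The hard part will be the bookkeeping: getting exactly the prefactor $\ell/(\ell-k+1)$ rather than the looser $\ell/(\ell-k)$ that the most naive telescoping would produce, and matching the precise correction $2k/((\ell-k)(\ell-k-1))$. This rests both on the rescaling choice $(\ell-k+1)/\ell$ (rather than $(\ell-k)/\ell$) and on exploiting the exact value of $\Expec{Z_k^2}$ so that the telescoping identities close up cleanly. The asymptotic statements and the tightness/$L^1$-boundedness conclusion are routine once the sharp drift estimate is in place.
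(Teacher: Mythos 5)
Your proposal is correct and takes essentially the same route as the paper's proof: the same coupling $X_k \leq Z_k$ (giving the upper bound and the control $\Expec{X_k^2} \leq \Expec{Z_k^2}$ of the quadratic drift term via Proposition~\ref{propRTCNZk}), the same one-step drift computation, the same telescoping after renormalizing by the prefactor $(\ell - k + 1)$, and the same endgame via Markov's inequality. The only difference is in the final bookkeeping, where instead of your partial-fraction/Abel-summation plan with remainders absorbed by the slack in $a_0$, the paper bounds each error term by $\frac{\ell + i}{(\ell - i)(\ell - i - 1)(\ell - i - 2)}$, whose sum telescopes exactly to $\frac{k}{(\ell - k)(\ell - k - 1)}$, so the correction $\frac{2k}{(\ell - k)(\ell - k - 1)}$ falls out with no leftover to absorb.
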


\begin{proof}
The upper bound on $\Expec{X_k}$ follows immediately from
$X_k \leq Z_k$ and Proposition~\ref{propRTCNZk}.
For the lower bound, let us denote by
$\mathcal{F}_k = \sigma(X_0, \ldots, X_{k}; Z_0, \ldots, Z_k)$ the
filtration generated by $Z$ and $X$. Then,
\begin{align*}
  \Expec{X_{k + 1} \given \mathcal{F}_k}
  \;&= X_k +
     \frac{X_k(\ell - k - X_k)(\ell - k - X_k - 1)}{(\ell - k)(\ell - k - 1)^2} - 
      \frac{X_k(X_k - 1)^2}{(\ell - k)(\ell - k - 1)^2}\\[1ex]
  \;&=\; X_k\, \mleft(1 
      + \frac{(\ell - k)(\ell - k - 1) - 1}{(\ell - k)(\ell - k - 1)^2}
      - \frac{2(\ell - k) - 3}{(\ell - k)(\ell - k - 1)^2} \, X_k\mright) \\[1ex]
    \;&\geq\; X_k\, \mleft(1
      + \frac{1}{\ell - k}\mright) - \frac{2\, Z_k^2}{(\ell - k - 1)^2} 
\end{align*}
Taking expectations, 
\[
  \Expec{X_{k + 1}}
    \;\geq\; \mleft(1 + \frac{1}{\ell - k}\mright)\, \Expec{X_k}
    - \frac{2\,}{(\ell - k - 1)^2} \Expec{Z_k^2} \,, 
\]
and using Proposition~\ref{propRTCNZk} we get
\[
  \Expec{X_{k + 1}}
    \;\geq\; \mleft(1 + \frac{1}{\ell - k}\mright)\, \Expec{X_k}
    - \frac{2\,\ell(\ell + k + 1)}{(\ell - k + 1)(\ell - k)(\ell - k - 1)^2}\, .
\]
Multiplying both side of the inequality by $(\ell - k)$, we get
\[
  (\ell - k)\, \Expec{X_{k + 1}}
    \;\geq\; (\ell - (k - 1))\, \Expec{X_k}
    - \frac{2\,\ell(\ell + k + 1)}{(\ell - k + 1)(\ell - k - 1)^2}\, .
\]
and as a result,
\[
  (\ell - (k - 1))\, \Expec{X_{k}}
    \;\geq\; \ell + 1
    - 2 \ell \sum_{i = 0}^{k - 1} 
    \frac{\ell + i + 1}{(\ell - i + 1)(\ell - i - 1)^2}\, .
\]
Using the fact that
\[
  \frac{\ell + i + 1}{(\ell - i + 1)(\ell - i - 1)^2} \;\leq\;
  \frac{\ell + i}{(\ell - i)(\ell - i - 1)(\ell - i - 2)}
\]
and that
\[
  \sum_{i = 0}^{k - 1}
  \frac{\ell + i}{(\ell - i)(\ell - i - 1)(\ell - i - 2)}
  \;=\;
  \frac{k}{(\ell - k)(\ell - k - 1)} \,,
\]
we get
\[
  \Expec{X_k} \;\geq\;
  \frac{\ell}{\ell - k + 1}
  \mleft(1 - \frac{2 k}{(\ell - k)(\ell - k - 1)}\mright) \, .
\]
Finally, by taking $k = \lfloor \ell - M\sqrt{\ell}\rfloor$ in these
inequalities we get
\[
  \frac{\ell}{\sqrt{\ell}(M\sqrt{\ell} + 2)}
  \mleft(1 - \frac{2(\ell - M\sqrt{\ell})}{M\sqrt{\ell}(M\sqrt{\ell} - 1)}\mright)
  \;\leq\;
  \frac{1}{\sqrt{\ell}}\Expec{X_{\lfloor \ell - M \sqrt{\ell}\rfloor}} \;\leq\;
  \frac{1}{M}
\]
where the term on the left-hand side of the inequality goes to
$\frac{1}{M}(1 - \frac{2}{M^2})$ as $\ell \to \infty$.
Finally, the tightness follows from Markov's inequality.
\end{proof}

Proposition~\ref{propRTCNExpecAncesLineage} suggests the following
conjecture.

\begin{conjecture} \label{conjRTCNMain}
The sequence of random variables
$\frac{1}{\sqrt{\ell}} X^{(\ell)}_{\lfloor\ell - M \sqrt{\ell}\rfloor}$
converges in distribution to a positive random variable $W_M$.
\end{conjecture}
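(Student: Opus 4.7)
The plan is to identify the limit $W_M$ via a two-phase analysis, exploiting the pathwise domination $X^{(\ell)}\leq Z$ established in Section~\ref{appRTCNVariance} together with the fact that $Z$ is essentially a Yule process observed along a deterministic clock.

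First I would verify that $Z$ admits a Yule-process representation: if $(Y_s)_{s\geq 0}$ is a rate-one pure-birth process with $Y_0=1$, one can couple $Z_k\approx Y_{\tau_k}$ with $\tau_k=\sum_{i=0}^{k-1}1/(\ell-i-1)=H_{\ell-1}-H_{\ell-k-1}$. Each one-step transition of $Z$ has birth probability $Z_k/(\ell-k-1)=Z_k(\tau_{k+1}-\tau_k)$, which matches the Bernoulli approximation of a Yule process over a small time increment; in the regime $Z_k\ll\ell-k$ this approximation is exact up to negligible errors. Combined with the classical almost-sure limit $e^{-s}Y_s\to W\sim\mathrm{Exp}(1)$ and with $\tau_{\lfloor\ell-A\sqrt{\ell}\rfloor}=\tfrac{1}{2}\log\ell-\log A+o(1)$, this yields $Z_{\lfloor\ell-A\sqrt{\ell}\rfloor}/\sqrt{\ell}\to W/A$ almost surely for every fixed $A>0$.

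Second I would control the discrepancy $D_k:=Z_k-X_k\geq 0$ under a joint coupling. Comparing the two transition kernels, $\Expec{D_{k+1}-D_k\mid\mathcal{F}_k}$ is bounded by a constant times $Z_k^2/(\ell-k)^2$, accounting for the killing term $x(x-1)^2/[(\ell-k)(\ell-k-1)^2]$ in $X$ and for the higher-order corrections to its birth rate relative to the birth rate $x/(\ell-k-1)$ of $Z$. Bounding $\Expec{Z_k^2}$ via Proposition~\ref{propRTCNZk} and summing gives $\Expec{D_{\lfloor\ell-A\sqrt{\ell}\rfloor}}=O(\sqrt{\ell}/A^3)$, which is $o(\sqrt{\ell})$ for any fixed $A$ and vanishes as $A\to\infty$. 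Consequently $X^{(\ell)}_{\lfloor\ell-A\sqrt{\ell}\rfloor}/\sqrt{\ell}\Rightarrow W/A$ (in fact in $L^1$) as $\ell\to\infty$ followed by $A\to\infty$.

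Third I would propagate this limit from $k^\ast=\lfloor\ell-A\sqrt{\ell}\rfloor$ forward to $\lfloor\ell-M\sqrt{\ell}\rfloor$ for $M<A$. On this window the state is of order $\sqrt{\ell}$ and, after rescaling $X/\sqrt{\ell}$ and parameterising by $s=(\ell-k)/\sqrt{\ell}$, a standard Markov-chain-to-ODE argument (in the spirit of Ethier--Kurtz for triangular arrays) gives convergence to the deterministic flow underlying Proposition~\ref{propRTCNConvergeDeter}, with random initial condition $W/A$ at $s=A$. Evolving this ODE down to $s=M$ defines a continuous map $W\mapsto W_M$, and the semigroup property of the flow ensures that the value at $s=M$ does not depend on the choice of $A$ once $A$ is large enough, defining $W_M$ unambiguously.

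The main obstacle will be making the second step rigorous: one needs $\Expec{D_k/\sqrt{\ell}}\to 0$ uniformly across the entire early regime, even though $Z$ itself is stochastic and occasionally attains atypically large values where the drift comparison degrades. The naive pointwise bound on the drift of $D_k$ is sharp only when $Z_k$ is close to its expectation; on rare trajectories where $Z_k\gg\sqrt{\ell}$, the lower-order corrections in $X$'s transition may accumulate a non-negligible deficit. Overcoming this will likely require either a finer particle-level coupling that keeps explicit track of the ``extra killing'' mechanism of $X$ relative to $Y$, or a moment method computing the limits of all factorial moments of $X^{(\ell)}_{\lfloor\ell-M\sqrt{\ell}\rfloor}/\sqrt{\ell}$ and invoking moment determinacy, in the spirit of the argument used for $\kappa_\ell$ in Section~\ref{secRTCNCherries}.
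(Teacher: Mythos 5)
First, a point of order: the paper does not prove this statement --- it is stated as Conjecture~\ref{conjRTCNMain} precisely because the authors could not complete the argument (``We were not able to complete the proof of the convergence of these rescaled processes''). So the question is not whether you match the paper's proof, but whether your sketch closes the known gap. Your architecture is essentially the paper's own two-phase program (domination $X \leq Z$ via the coupling, moment bounds as in Propositions~\ref{propRTCNZk} and~\ref{propRTCNExpecAncesLineage}, then the Ethier--Kurtz phase of Proposition~\ref{propRTCNConvergeDeter}), with one genuinely new twist: initializing the deterministic flow at $s = A$ and letting $A \to \infty$ after $\ell \to \infty$. That twist is attractive, and it even yields a concrete prediction: solving the limiting ODE $\xi' = -\xi/s + 2\xi^2/s^2$ through $\xi(A) = W/A$ gives $\xi(s) = s/(cs^2+1)$ with $c \to 1/W$, hence $W_M = MW/(M^2 + W)$ with $W \sim \mathrm{Exp}(1)$ --- whose mean $\frac{1}{M}(1 - \frac{2}{M^2}) + O(M^{-4})$ matches the paper's two-sided bounds in Proposition~\ref{propRTCNExpecAncesLineage}, a nontrivial consistency check worth recording.

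However, as a proof the proposal has genuine gaps, concentrated exactly where the paper says the difficulty lies. (1) The claim that $Z_{\lfloor \ell - A\sqrt{\ell}\rfloor}/\sqrt{\ell} \to W/A$ \emph{almost surely for fixed $A$} is unjustified and presumably false: in the window $\ell - k \asymp A\sqrt{\ell}$ the per-step birth probability $Z_k/(\ell-k-1)$ is of order $1/A^2$, bounded away from $0$ as $\ell \to \infty$, so $Z^{(\ell)}$ is \emph{not} asymptotically a time-changed Yule process there; at best one can build a coupling with $L^1$ error $O(\sqrt{\ell}/A^3)$, and that coupling (with control of how per-step discrepancies are amplified by subsequent Yule growth) is never constructed. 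Moreover ``almost surely'' is meaningless across $\ell$ without a common probability space. (2) The drift bound on $D_k = Z_k - X_k$ is wrong as stated: when $D_k > 0$ the discrepancy reproduces at rate $D_k/(\ell - k - 1)$, so $\Expec{D_{k+1}-D_k \given \mathcal{F}_k} \leq D_k/(\ell-k-1) + C Z_k^2/(\ell-k-1)^2$ and a Gronwall factor is needed; fortuitously the order $O(\sqrt{\ell}/A^3)$ survives, but note this is \emph{not} $o(\sqrt{\ell})$ for fixed $A$, so your parenthetical conclusion that $X_{\lfloor \ell - A\sqrt{\ell}\rfloor}/\sqrt{\ell} \Rightarrow W/A$ for fixed $A$ does not follow --- only the double limit does. (3) The Ethier--Kurtz step, as used in Proposition~\ref{propRTCNConvergeDeter}, requires convergence in distribution of the initial value at $s = A$; but that convergence for fixed $A$ is precisely Conjecture~\ref{conjRTCNMain} at parameter $A$, so invoking it is circular. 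Your double-limit route can in principle avoid circularity (subsequential tightness at $s=A$, EK along subsequences, quantitative continuity of the flow map, errors $\to 0$ as $A \to \infty$), but none of this is carried out, and it is exactly the ``uniform control across the early regime'' you yourself concede is open, echoing the paper's remark that the $X$--$Z$ coupling degrades in this regime. In sum: a credible program with a valuable explicit candidate for $W_M$, but not a proof --- the conjecture remains open.
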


One problem that arises in attempting to prove Conjecture~\ref{conjRTCNMain} it
is that as soon as we get in the regime
$k = \lfloor \ell - M\sqrt{\ell}(1 - t)\rfloor$, the random variables
$Z^{(\ell)}$ and $X^{(\ell)}$ start to
differ significantly and therefore the coupling is not so useful.

A very natural idea would be to use the backward-time construction to couple
$X^{(\ell)}$ and $X^{(\ell + 1)}_k$, but a difficulty with this approach is that
this coupling lacks continuity in the sense that, with
probability~$\Theta(1/\ell)$, $X^{(\ell)}_k$ and $X^{(\ell + 1)}$ will differ by
a factor $\Theta(\ell)$ for $k = \lfloor \ell - M \sqrt{\ell}\rfloor$.

\subsection{The deterministic phase}

\begin{proposition} \label{propRTCNConvergeDeter}
If Conjecture~\ref{conjRTCNMain} holds, that is, if
\[
  \tfrac{1}{\sqrt{\ell}} X^{(\ell)}_{\lfloor\ell - M \sqrt{\ell}\rfloor}
  \tendsto[d]{\ell \to \infty} W_M  > 0
\]
then, for all $\epsilon$ such that $0 < \epsilon < 1$, as $\ell \to \infty$,
\[
  \Big(\tfrac{1}{M\sqrt{\ell}}
  X^{(\ell)}_{\lfloor \ell -M \sqrt{\ell}(1 - t)\rfloor}, \;
  t \in \ClosedInterval{0, 1 - \epsilon}\Big)
  \;\implies\;
  \big(y(t, C_M),\; t \in \ClosedInterval{0, 1 - \epsilon}\big)
\]
where $\implies$ denotes convergence in distribution in the Skorokhod space,
\[
  y(t, C_M) \;=\; \frac{1 - t}{C_M \cdot(1 - t)^2 + 1}
\]
and $C_M = M / W_M - 1$.
\end{proposition}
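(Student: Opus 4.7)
The proposition is a classical fluid-limit statement for a time-inhomogeneous Markov chain. My plan is to compute the drift and variance of $X^{(\ell)}$ in the appropriate regime, recognize the limiting ordinary differential equation, verify the proposed closed form, and invoke a standard ODE-approximation theorem.

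Writing $n = \ell - k$ and combining the transition probabilities given in Section~\ref{appRTCNVariance}, one gets, in the regime $x, n = \Theta(\sqrt{\ell})$,
\begin{equation*}
\Expec{X^{(\ell)}_{k+1} - X^{(\ell)}_k \given X^{(\ell)}_k = x}
\;=\; \frac{x(n - 2x)}{n^2} \,+\, O\big(\ell^{-1/2}\big),
\end{equation*}
while the conditional variance of the increment is bounded by the total jump probability and is therefore $O(1)$. Introducing the rescaled process
\begin{equation*}
Y^{(\ell)}(t) \;:=\; \tfrac{1}{M\sqrt{\ell}}\,X^{(\ell)}_{\lfloor \ell - M\sqrt{\ell}(1-t)\rfloor},
\end{equation*}
one unit of $t$ corresponds to $M\sqrt{\ell}$ elementary steps and $n \approx M\sqrt{\ell}(1-t)$; substituting $x = M\sqrt{\ell}\,y$ shows that the drift of $Y^{(\ell)}$ per unit of $t$ converges, uniformly on compact subsets of $\{(t, y) : 0 \le t \le 1-\epsilon,\ y > 0\}$, to
\begin{equation*}
b(t, y) \;=\; \frac{y\,(1 - t - 2y)}{(1-t)^2},
\end{equation*}
while the quadratic variation per unit of $t$ is of order $1/\sqrt{\ell}$ and therefore negligible. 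The candidate scaling limit is thus the deterministic solution of $\dot y = b(t, y)$.

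The proposed formula is then checked by direct substitution: for every $C \ge 0$, the function $y(t, C) = (1-t)/(C(1-t)^2 + 1)$ satisfies $\dot y = b(t, y)$ with $y(0, C) = 1/(C+1)$. Under Conjecture~\ref{conjRTCNMain}, $Y^{(\ell)}(0) = \tfrac{1}{M\sqrt{\ell}}X^{(\ell)}_{\lfloor \ell - M\sqrt{\ell}\rfloor}$ converges in distribution to $W_M/M$, so the unique solution matching this limiting initial condition on $[0, 1-\epsilon]$ is $y(\cdot, C_M)$ with $C_M = M/W_M - 1$, as required.

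Combining drift convergence, vanishing quadratic variation, and convergence of initial conditions through a standard ODE-approximation theorem for Markov chains then yields convergence in the Skorokhod space. The main technical difficulty I anticipate is tightness: one must ensure that $Y^{(\ell)}$ stays in a compact subset of $(0, \infty)$ throughout $[0, 1-\epsilon]$, since the drift $b(t, y)$ degenerates as $y \to 0$ and the Markov-chain approximation weakens there. An upper bound is already provided by the comparison $X^{(\ell)} \le Z$ used in Proposition~\ref{propRTCNExpecAncesLineage}. For the lower bound, I would condition on the event $\{W_M \ge a\}$ for fixed $a > 0$, introduce the stopping time at which $Y^{(\ell)}$ first drops below $a/2$, and use a comparison with a suitable sub-solution ODE to show that this stopping time exceeds $1 - \epsilon$ with high probability; letting $a \to 0$ then removes the conditioning.
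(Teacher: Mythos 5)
Your proposal is correct and follows essentially the same route as the paper: the paper likewise computes the infinitesimal mean and variance of the rescaled chain, proves uniform convergence of the drift on $[0,R]\times[0,1-\epsilon]$ to a limit that is algebraically identical to your $b(t,y) = y(1-t-2y)/(1-t)^2$ (the paper writes it as $\frac{y(1-t-y)^2}{(1-t)^3} - \frac{y^3}{(1-t)^3}$), notes the quadratic variation is $O(1/\sqrt{\ell})$, checks that $y(t,C_M)$ is the unique solution of the Cauchy problem with initial condition $W_M/M$, and concludes via Corollary~4.2 of Ethier--Kurtz (adapted to the time-inhomogeneous setting by adjoining time to the state). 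The one point where you diverge --- the anticipated tightness difficulty near $y = 0$ --- is spurious: the limiting drift is polynomial in $y$ (hence Lipschitz and vanishing at $y = 0$) and the drift convergence holds uniformly down to $y = 0$, the only genuine singularity being at $t = 1$, which the restriction to $[0,1-\epsilon]$ already removes, so no stopping-time sub-solution argument or conditioning on $\{W_M \geq a\}$ is needed.
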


\begin{proof}
Let us write for convenience $M_\ell \defas M \sqrt{\ell}$ and
\[
  \mathscr{T}_\ell =
  \tfrac{1}{M_\ell} \Set*[\big]{0, \ldots, \ell - 2 - \lfloor \ell - M_\ell\rfloor}\,.
\]
Define the Markov chain $(Y^{(\ell)}_t, \, t \in \mathscr{T}_\ell)$ taking
values in $\frac{1}{M_\ell}\N$ by
\[
  Y^{(\ell)}_t  \;=\;
  \tfrac{1}{M_\ell} X^{(\ell)}_{\lfloor \ell - M_\ell \rfloor + tM_\ell} \,.
\]
The Markov chain $Y^{(\ell)}$  has infinitesimal mean
\begin{align*}
  b^{(\ell)}(y, t)
  \;&=\; M_\ell\, \Expec{Y_{t + 1/M_\ell}^{(\ell)} - y \given Y_{t}^{(\ell)} = y} \\[0.5ex]
  \;&=\;
  \Expec{X_{\lfloor \ell - M_\ell \rfloor + tM_\ell + 1}^{(\ell)} - yM_\ell \given
  X_{\lfloor \ell - M_\ell \rfloor + tM_\ell}^{(\ell)} = yM_\ell} \\[0.7ex]
  \;&=\;
    \frac{yM_\ell(\ell - k - yM_\ell)(\ell - k - yM_\ell - 1)}{(\ell - k)(\ell - k - 1)^2}
     - \frac{yM_\ell(yM_\ell - 1)^2}{(\ell - k)(\ell - k - 1)^2}
\end{align*}
where $k = \lfloor \ell - M_\ell \rfloor + tM_\ell$. Let us show that, for
any $R > 0$ and any $\epsilon > 0$,
\[
  b^{(\ell)}(y, t) \tendsto{\ell \to \infty}
  \frac{y(1 - t - y)^2}{(1 - t)^3} - \frac{y^3}{(1 - t)^3}\,, 
\]
uniformly in $(y, t) \in \ClosedInterval{0, R}  \times
\ClosedInterval{0, 1 - \epsilon}$. Let us write
\[
  \begin{dcases}
    b_+^{(\ell)}(y, t) = 
   \frac{yM_\ell(\ell - k - yM_\ell)(\ell - k - yM_\ell - 1)}{(\ell - k)(\ell - k - 1)^2}
    \\[0.5ex]
    b_-^{(\ell)}(y, t) =
    \frac{yM_\ell(yM_\ell - 1)^2}{(\ell - k)(\ell - k - 1)^2}
  \end{dcases}
\]
Using that
$(1 - t)M_\ell - 1 \leq \ell - k \leq (1 - t) M_\ell$, we get
\[
  \frac{y(1 - t - y - 2/M_\ell)^2}{(1 - t)^3}
  \;\leq\; b_+^{(\ell)}(y, t) \;\leq\;
  \frac{y(1 - t - y)^2}{(1 - t - 2/M_\ell)^3}
\]
As a result, 
\begin{align*}
  b_+^{(\ell)}(y, t) -
  \frac{y(1 - t - y)^2}{(1 - t)^3}
  \;&\geq\; \frac{y}{(1 - t)^3}\mleft(- \frac{4}{M_\ell}(1 - t -y) +
  \frac{4}{M_\ell^2}\mright) \\[0.5ex]
  \;&\geq\;
  -\frac{4R}{\epsilon^3 M_\ell} + O\big(1/M_\ell^2\big)
\end{align*}
Similarly,
\begin{align*}
  b_+^{(\ell)}(y, t) -
  \frac{y(1 - t - y)^2}{(1 - t)^3}
  \;&\leq\;
  \frac{y(1 - t - y)^2}{(1 - t)^3(1 - t - 2/M_\ell)^3}\mleft(
  (1 - t)^3 - (1 - t - 2/M_\ell)^3
  \mright) \\[0.5ex]
  \;&\leq\;
  \frac{R(1 + R)^2}{\epsilon^3(\epsilon - 2/M_\ell)^3}\mleft(
  \frac{6}{M_\ell} + O\big(1/M_\ell^2\big) \mright) \, .
\end{align*}
This proves the uniform convergence of
$b_+^{(\ell)}(y, t)$. The uniform convergence of
$b_-^{(\ell)}(y, t)$ is treated similarly.

Now, since $X^{(\ell)}$ has jumps of size one,
$Y^{(\ell)}$ has infinitesimal variance
\begin{align*}
  a^{(\ell)}(y, t) \;&=\;
  M_\ell\, \Expec{\mleft(Y_{t + 1/M_\ell}^{(\ell)} - y\mright)^2 \given 
  Y_{t}^{(\ell)} = y}  \\
  \;&=\; \frac{1}{M_\ell} \mleft(b_+^{(\ell)}(y, t) +
  b_-^{(\ell)}(y, t)\mright) \,, 
\end{align*}
which goes to zero uniformly in $(y, t) \in \ClosedInterval{0, R}  \times
\ClosedInterval{0, 1 - \epsilon}$. Assuming that
\[
  \tfrac{1}{M_\ell} X^{(\ell)}_{\lfloor\ell - M_\ell \rfloor}
  \tendsto[d]{\ell \to \infty} W_M / M \,, 
\]
the convergence of the piecewise constant interpolation of
$Y^{(\ell)}$ to the solution of the Cauchy problem
\[
  \begin{dcases}
  \frac{dy}{dt} \;=\;
  \frac{y(1 - t - y)^2}{(1 - t)^3} - \frac{y^3}{(1 - t)^3}\\
    y(0) = W_M / M
  \end{dcases}
\]
follows from Corollary 4.2 of \cite{EthierKurtz} (see for instance
Chapter~8.7 of \cite{durrett1996stochastic} for a more practical introduction).
Note that in this references, the results are stated for time-homogeneous
Markov chains. However, they are easily adapted to time-inhomogeneous
ones by extending the state space with time in order to obtain a time-homogeneous
process.

Finally, the function given in the Proposition is then readily checked to be
the unique solution of that Cauchy problem, concluding the proof.
\end{proof}
To close this section, let us mention briefly that an idea to
study the embedded process $\tilde{X}^{(\ell)}$ is to
introduce the process $S^{(\ell)}$ that counts
the jumps of $X^{(\ell)}$. Indeed, with this process,
\[
  \tilde{X}^{(\ell)}_{i} = X^{(\ell)}_{(S^{(\ell)})^{-1}(i)} \, .
\]
As a result, proving the convergence
\[
  \tfrac{1}{M_\ell}
  \mleft(X^{(\ell)}_{\lfloor\ell - M_\ell \rfloor + tM_\ell}, \;
  S^{(\ell)}_{\lfloor\ell - M_\ell \rfloor + tM_\ell}
  \mright) \implies (y(t), s(t))
\]
would show that
\[
  \tfrac{1}{M_\ell}
  \tilde{X}^{(\ell)}_{\lfloor \ell - M_\ell(1 - t)\rfloor}
  \implies y(s^{-1}(t))  \,,
\]
and we might be able to take $M \to \infty$ to study the convergence of
$\tfrac{1}{\sqrt{\ell}} \tilde{X}^{(\ell)}_{\lfloor \ell t\rfloor}$.

\section*{Acknowledgements}

FB\ and AL\ thank the
Center for Interdisciplinary Research in Biology (CIRB) for funding.
MS\ thanks the New Zealand Marsden Fund (UOC1709) for funding.
The authors thank
Régine Marchand, Michael Fuchs and two anonymous referees for
helpful corrections and suggestions. FB\ thanks
Jean-Jil Duchamps and Peter Czuppon for
discussions about Conjecture~\ref{conjRTCNMain}.

\appendix

\bibliographystyle{abbrv}
\bibliography{biblio}

\begin{thebibliography}{10}

\bibitem{OEIS}
{T}he {O}n-{L}ine {E}ncyclopedia of {I}nteger {S}equences, published
  electronically at \url{http://oeis.org}, 2019.

\bibitem{Barbour1992}
A.~D. Barbour, L.~Holst, and S.~Janson.
\newblock {\em Poisson approximation}.
\newblock Clarendon Press, 1992.

\bibitem{boutsikas2000bound}
M.~V. Boutsikas and M.~V. Koutras.
\newblock A bound for the distribution of the sum of discrete associated or
  negatively associated random variables.
\newblock {\em Annals of Applied Probability}, 10(4):1137--1150, 2000.

\bibitem{CardonaArXiv}
G.~Cardona, J.~C. Pons, and C.~Scornavacca.
\newblock Generation of tree-child phylogenetic networks.
\newblock {\em {PLOS} Computational Biology}, 15:e1007347, 2019.

\bibitem{Cardona2009TCBB}
G.~Cardona, F.~Rossello, and G.~Valiente.
\newblock Comparison of tree-child phylogenetic networks.
\newblock {\em IEEE/ACM Transactions on Computational Biology and
  Bioinformatics (TCBB)}, 6(4):552--569, 2009.

\bibitem{CardonaZhang}
G.~Cardona and L.~Zhang.
\newblock Counting and enumerating tree-child networks and their subclasses.
\newblock {\em Journal of Computer and System Sciences}, 114:84--104, 2020.

\bibitem{durrett1996stochastic}
R.~Durrett.
\newblock {\em Stochastic calculus: a practical introduction}.
\newblock CRC Press, 1996.

\bibitem{EthierKurtz}
S.~N. Ethier and T.~G. Kurtz.
\newblock {\em Markov processes: characterization and convergence}.
\newblock John Wiley \& Sons, 2nd edition, 2005.

\bibitem{FuchsArXiv}
M.~Fuchs, B.~Gittenberger, and M.~Mansouri.
\newblock On the asymptotic growth of the number of tree-child networks.
\newblock {\em Australasian Journal of Combinatorics}, 73(2):385--423, 2019.

\bibitem{Fuchs2ArXiv}
M.~Fuchs, G.-R. Yu, and L.~Zhang.
\newblock On the asymptotic growth of the number of tree-child networks.
\newblock {\em European Journal of Combinatorics}, 93:103278, 2021.

\bibitem{Knuth1997}
D.~E. Knuth.
\newblock {\em The art of computer programming: sorting and searching},
  volume~3.
\newblock Addison-Wesley, 1997.

\bibitem{LambertBrazJProbabStat2017}
A.~Lambert.
\newblock Probabilistic models for the (sub)tree(s) of life.
\newblock {\em Brazilian Journal of Probability and Statistics},
  31(3):415--475, 2017.

\bibitem{McDiarmid2015AnnComb}
C.~McDiarmid, C.~Semple, and D.~Welsh.
\newblock Counting phylogenetic networks.
\newblock {\em Annals of Combinatorics}, 19(1):205--224, 2015.

\bibitem{Pitman2006}
J.~Pitman.
\newblock {\em Combinatorial Stochastic Processes: {\'{E}}cole
  d{'}{\'{e}}t{\'{e}} de probabilit{\'{e}}s de {S}aint-{F}lour XXXII-2002}.
\newblock Springer-Verlag Berlin Heidelberg, 2006.

\bibitem{RossProbabilitySurveys2011}
N.~Ross.
\newblock Fundamentals of {S}tein{'}s method.
\newblock {\em Probability Surveys}, 8:201--293, 2011.

\bibitem{Steel2016}
M.~Steel.
\newblock {\em Phylogeny: discrete and random processes in evolution}.
\newblock SIAM, 2016.

\bibitem{steel2001properties}
M.~Steel and A.~McKenzie.
\newblock Properties of phylogenetic trees generated by yule-type speciation
  models.
\newblock {\em Mathematical biosciences}, 170(1):91--112, 2001.

\bibitem{Sage}
W.~A. Stein et~al.
\newblock {\em {S}age {M}athematics {S}oftware ({V}ersion 8.2)}.
\newblock The Sage Development Team, 2019.
\newblock \url{http://www.sagemath.org}.

\bibitem{VanDerHofstad2016}
R.~Van Der~Hofstad.
\newblock {\em Random graphs and complex networks}.
\newblock Cambridge University Press, 2016.

\bibitem{Willson2007BMBa}
S.~J. Willson.
\newblock Unique determination of some homoplasies at hybridization events.
\newblock {\em Bulletin of mathematical biology}, 69(5):1709--1725, 2007.

\bibitem{wil12}
S.~J. Willson.
\newblock Tree-average distances on certain phylogenetic networks have their
  weights uniquely determined.
\newblock {\em Algorithms in Molecular Biology}, 7:13, 2012.

\end{thebibliography}

\newpage

\section{Lemmas used in Section~\ref{secRTCNStats}}

In this section, we recall the proof of two elementary lemmas that were used
to study the number of cherries and of tridents in
Section~\ref{secRTCNStats}.

\begin{lemma} \label{lemmaRTCNRecur}
Let $(u_\ell)$ be a sequence satisfying the recursion
\[
  u_{\ell + 1} = a_\ell\, u_\ell + b_\ell
\]
and let $i$ be such that $\forall \ell \geq i$, $a_\ell \neq 0$. Then,
\[
  \forall \ell \geq i, \quad
  u_\ell = \mleft(u_i + \sum_{k = i}^{\ell - 1}
  \frac{b_k}{\prod^k_{j = i}a_j} \mright)\prod_{k = i}^{\ell - 1}a_k 
\]
\end{lemma}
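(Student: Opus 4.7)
The plan is a straightforward induction on $\ell \geq i$.

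For the base case $\ell = i$, both the sum $\sum_{k=i}^{i-1}$ and the product $\prod_{k=i}^{i-1}$ on the right-hand side are empty, evaluating to $0$ and $1$ respectively, so the formula reduces to $u_i = u_i$, as needed.

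For the inductive step, assume the claimed identity holds at rank $\ell$. Substituting the induction hypothesis into the recursion $u_{\ell+1} = a_\ell u_\ell + b_\ell$ and pulling $a_\ell$ inside the outer product by writing $a_\ell \prod_{k=i}^{\ell-1} a_k = \prod_{k=i}^{\ell} a_k$ handles the first summand. For the additive $b_\ell$ term, the key observation is that, because $a_j \neq 0$ for every $j \geq i$, we may rewrite
\[
b_\ell \;=\; \frac{b_\ell}{\prod_{j=i}^{\ell} a_j}\,\prod_{k=i}^{\ell} a_k,
\]
after which it factors into the common product $\prod_{k=i}^{\ell} a_k$ and extends the running sum from upper index $\ell-1$ to $\ell$, producing exactly the right-hand side of the formula at rank $\ell+1$.

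There is no real obstacle here: the only nontrivial ingredient is the nonvanishing hypothesis on the $a_j$, which is used precisely (and only) to legitimize the division in the rewriting of $b_\ell$ above. The empty sum/product conventions take care of the base case.
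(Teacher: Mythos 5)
Your proof is correct, and the induction goes through exactly as you describe: the empty-sum/empty-product conventions settle the base case, and the only use of the hypothesis $a_j \neq 0$ is indeed to justify writing $b_\ell = \bigl(b_\ell / \prod_{j=i}^{\ell} a_j\bigr)\prod_{k=i}^{\ell} a_k$. The paper proves the same identity by a telescoping argument rather than by induction: since $a_k \neq 0$ for $k \geq i$, one divides the recursion by $\prod_{j=i}^{k} a_j$ to obtain
\[
  \frac{u_{k + 1}}{\prod_{j = i}^{k} a_j} \;-\;
  \frac{u_{k}}{\prod_{j = i}^{k - 1} a_j} \;=\;
  \frac{b_k}{\prod_{j = i}^{k} a_j}\,,
\]
and then sums this from $k = i$ to $\ell - 1$, the left-hand side collapsing to $u_\ell / \prod_{j=i}^{\ell-1} a_j - u_i$. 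The two arguments are computationally equivalent (an induction and a telescoping sum are two faces of the same step-by-step verification, and both invoke the nonvanishing hypothesis at the identical point), but they differ in flavor: the paper's normalization by the partial products is the discrete analogue of an integrating factor, so its proof simultaneously \emph{derives} the closed form rather than presupposing it, whereas your induction is a pure verification that requires the formula to be guessed in advance. For a lemma of this kind, where the statement is given, your route is arguably the more economical write-up; the paper's route is the one you would use to discover the formula in the first place.
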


\begin{proof}
For $k \geq i$, since $a_k \neq 0$ we have
\[
  \frac{u_{k + 1}}{\prod_{j = i}^{k} a_j} \;-\;
  \frac{u_{k}}{\prod_{j = i}^{k - 1} a_j} \;=\;
  \frac{b_k}{\prod_{j = i}^{k} a_j} \,,
\]
where the empty product is one.
As a result, for all $\ell \geq i$,
\[
  \frac{u_{\ell}}{\prod_{j = i}^{\ell - 1} a_j} \;-\; u_i
  \;=\;
  \sum_{k = i}^{\ell - 1}
  \mleft(\frac{u_{k + 1}}{\prod_{j = i}^{k} a_j} -
  \frac{u_{k}}{\prod_{j = i}^{k - 1} a_j}\mright) \;=\;
  \sum_{k = i}^{\ell - 1}
  \frac{b_k}{\prod_{j = i}^{k} a_j} \,,
\]
and the proof is over.
\end{proof}

\begin{lemma} \label{lemmaStupid}
Let $(v_\ell)$ be such that $v_\ell \sim \alpha\, \ell^{\, p}$, where $p \geq 0$
and $\alpha \neq 0$.
Then, 
\[
  \sum_{k = i}^{\ell - 1} v_k \sim \frac{\alpha}{p + 1} \ell^{\,p + 1} \,.
\]
\end{lemma}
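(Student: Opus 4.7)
The plan is to reduce the statement to the well-known asymptotic $\sum_{k=1}^{\ell-1} k^p \sim \ell^{p+1}/(p+1)$, which itself follows from an elementary integral comparison (for $p \geq 0$ the function $x \mapsto x^p$ is increasing, so $\int_0^{\ell-1} x^p\,dx \leq \sum_{k=1}^{\ell-1} k^p \leq \int_1^{\ell} x^p\,dx$, and both bounds are equivalent to $\ell^{p+1}/(p+1)$). Given this, the lemma is just a standard ``Cesàro-type'' transfer from termwise asymptotics to partial sums.

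Concretely, I would fix $\varepsilon>0$ and use the hypothesis $v_\ell/(\alpha \ell^p) \to 1$ to pick an index $N \geq i$ such that for all $k \geq N$, $(1-\varepsilon)\alpha k^p \leq v_k \leq (1+\varepsilon)\alpha k^p$ (assuming $\alpha>0$; the case $\alpha<0$ is identical after changing signs). Splitting the sum as
\[
  \sum_{k=i}^{\ell-1} v_k \;=\; \sum_{k=i}^{N-1} v_k \;+\; \sum_{k=N}^{\ell-1} v_k,
\]
the first piece is a fixed constant independent of $\ell$, while the second piece is squeezed between $(1\mp\varepsilon)\alpha\sum_{k=N}^{\ell-1} k^p$. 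Dividing everything by $\ell^{p+1}/(p+1)$ and letting $\ell \to \infty$, the fixed piece vanishes (since $p+1 > 0$) and the squeeze gives $1-\varepsilon$ and $1+\varepsilon$ as lower and upper limits of $(p+1)\sum_{k=i}^{\ell-1} v_k / (\alpha \ell^{p+1})$. Since $\varepsilon$ is arbitrary, the limit equals $1$, which is the claim.

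There is essentially no obstacle: the only small point requiring care is that one must know $\sum_{k=1}^{\ell-1} k^p \sim \ell^{p+1}/(p+1)$, which is why the hypothesis $p \geq 0$ is stated (so that the integral comparison is immediate); and one must be mindful that the $O(1)$ contribution from the initial terms $k < N$ is negligible compared to $\ell^{p+1}$, which again uses $p+1 > 0$. Since both are standard, the proof reduces to writing the squeeze cleanly.
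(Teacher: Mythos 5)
Your proof is correct and follows essentially the same route as the paper: both split the sum at a threshold index beyond which the termwise asymptotics kick in, use the integral comparison $\sum_{k} k^p \sim \ell^{p+1}/(p+1)$, and observe that the finitely many initial terms are $O(1) = o(\ell^{p+1})$. The only cosmetic difference is that you use a multiplicative sandwich $(1\pm\varepsilon)\alpha k^p$ (requiring the brief sign remark on $\alpha$), whereas the paper writes the additive error form $v_\ell = \alpha\ell^p + \epsilon_\ell\ell^p$, which handles both signs of $\alpha$ uniformly.
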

\begin{proof}
Let $\epsilon_\ell \to 0$ be such that
$v_\ell = \alpha\, \ell^{\, p} \, +\,  \epsilon_\ell\, \ell^{\,p}$.
Then,
\[
  \sum_{k = i}^{\ell - 1} v_k \;=\;
  \alpha \sum_{k = i}^{\ell - 1} k^{\, p} +
  \sum_{k = i}^{\ell - 1} \epsilon_k\, k^{\, p}
\]
Comparison with an integral shows that $\sum_{k = i}^{\ell - 1} k^{\, p} \sim
\ell^{\,p + 1} / (p + 1)$, so to finish the proof we simply have to show that
$\sum_{k = i}^{\ell - 1} \epsilon_k\, k^{\, p} = o(\ell^{\, p + 1})$.
Since $\epsilon_\ell \to 0$, for all $\eta > 0$ there
exists $j_\eta$ such that $\forall k \geq j_\eta$, 
$\Abs{\epsilon_k} < \eta$. Therefore,
\[
  \Abs{\sum_{k = i}^{\ell - 1} \epsilon_k\, k^{\, p}}
 \;<\;
  \sum_{k = i}^{j_\eta - 1} \Abs{\epsilon_k\, k^{\, p}} \;+\;
  \eta \sum_{k = j_\eta}^{\ell - 1} k^{\, p}
\]
For fixed $\eta$, the first of these sums has a fixed number of terms and
thus is bounded. The
second one is asymptotically equivalent to $\eta\, \ell^{p + 1} / (p + 1)$.
Thus, for all $\eta > 0$, for $\ell$ large enough,
\[
  \Abs{\frac{\sum_{k = i}^{\ell - 1} \epsilon_k\, k^{\, p}}{\ell^{\,p + 1}}} 
 \;<\; \eta
\]
and the proof is over.
\end{proof}

\section{%
\texorpdfstring{Variance of $\chi_\ell$}%
{Variance of the number of tridents}%
} \label{appRTCNVariance}

In this section, we prove Proposition~\ref{propRTCNVariance} by obtaining
an explicit expression for the variance of the number $\chi_\ell$ or
tridents of a uniform RTCN with $\ell$ labeled leaves.

Consider the Markov chain $(X_\ell)_{\ell \geq 2}$ defined in
Proposition~\ref{propRTCNMarkovchainRetCherries}, and let
$s_\ell = \Expec{X_\ell^2}$. From the transition probabilities of
$X_\ell$, we get
\begin{align*}
  \Expec{X_{\ell + 1}^2 \given X_\ell = k} =
  \mleft(\frac{\ell - 6}{\ell}\mright)^2 \, k^2 \;+\;
  \frac{2\ell^2 - \ell -3}{\ell^2} \, k \;+\; \frac{\ell - 1}{\ell} \, .
\end{align*}
Integrating in $k$, this yields
\begin{align*}
  s_{\ell + 1} = 
  \mleft(\frac{\ell - 6}{\ell}\mright)^2 \, s_{\ell} \;+\;
  \frac{2\ell^2 - 8\ell -3}{\ell^2} \, \mu_{\ell} \;+\; \frac{\ell - 1}{\ell} \,
\end{align*}
where, for $\ell \geq 4$, we
can substitute the expression of $\mu_{\ell}$ given in
Proposition~\ref{propRTCNExpecRetCherries}. Rearranging a bit
get that for all $\ell \geq 4$,
\[
  s_{\ell + 1} = 
  \mleft(\frac{\ell - 6}{\ell}\mright)^2 \, s_{\ell} \;+\;
  \frac{30\, \ell^5 - 185\, \ell^4 + 188\, \ell^3 + 746\, \ell^2 - 1649\, \ell + 843}
  {105\, (\ell - 1) (\ell - 2) (\ell - 3) \ell}
\]
Using Lemma~\ref{lemmaRTCNRecur} and a symbolic computation software, we
get an explicit expression for $s_\ell$, and, from there,
\begin{align*}
  \Var{\chi_\ell} &=
  \frac{{\big(59400 \, \ell^{9} - 1618650 \, \ell^{8} + O(\ell^7)\big)} \ell}{
    1576575 \, {(\ell - 1)}^{2} {(\ell - 2)}^{2} {(\ell - 3)}^{2} {(\ell - 4)}
  {(\ell - 5)} {(\ell - 6)}}
\end{align*}
from which Proposition~\ref{propRTCNVariance} follows.

\end{document}